\def\XXint#1#2#3{{\setbox0=\hbox{$#1{#2#3}{\int}$ }
\vcenter{\hbox{$#2#3$ }}\kern-.6\wd0}}
\newcommand{\R}{\mathbb{R}}
\newcommand{\Z}{\mathbb{Z}}
\newcommand{\N}{\mathbb{N}}
\newtheorem{theorem}{Theorem}[section]
\newtheorem{lemma}[theorem]{Lemma}
\newtheorem{proposition}[theorem]{Proposition}
\theoremstyle{definition}
\newtheorem{remark}[theorem]{Remark}
\renewcommand{\appendix}{\par
   \setcounter{section}{0}%
   \setcounter{subsection}{0}%
   \setcounter{subsubsection}{0}%
   \gdef\thesection{\@Alph\c@section}%
   \gdef\thesubsection{\@Alph\c@section.\@arabic\c@subsection}%
   \gdef\theHsection{\@Alph\c@section.}%
   \gdef\theHsubsection{\@Alph\c@section.\@arabic\c@subsection}%
   \csname appendixmore\endcsname
 }
\numberwithin{equation}{section}
\begin{document}

\arraycolsep=1pt

\title{\bf\Large Mixed norm estimates for dilated averages over planar curves
\footnotetext{\hspace{-0.35cm} 2020 {\it
Mathematics Subject Classification}. Primary 42B10;
Secondary 42B15.
\endgraf {\it Key words and phrases.} generalised Radon transforms, half-wave propagator, Fourier integral operators, mixed norm estimates, local smoothing estimates.
}}
\author{Junfeng Li, Zengjian Lou and Haixia Yu\footnote{Corresponding author.}}

\date{}

\maketitle

\vspace{-0.7cm}

\begin{abstract}
 In this paper, we investigate the mixed norm estimates for the operator $ T $associated with a dilated plane curve $(ut, u\gamma(t))$, defined by
\[
Tf(x, u) := \int_{0}^{1} f(x_1 - ut, x_2 - u\gamma(t)) \, dt,
\]
where $ x := (x_1, x_2) $ and $\gamma $ is a general plane curve satisfying appropriate smoothness and curvature conditions. More precisely, we establish the $ L_x^p(\mathbb{R}^2) \rightarrow L_x^q L_u^r(\mathbb{R}^2 \times [1, 2]) $ (space-time) estimates for $ T $, whenever $(\frac{1}{p},\frac{1}{q})$ satisfy
\[
\max\left\{0, \frac{1}{2p} - \frac{1}{2r}, \frac{3}{p} - \frac{r+2}{r}\right\} < \frac{1}{q} \leq \frac{1}{p} < \frac{r+1}{2r}
\]
and
$$1 + (1 + \omega)\left(\frac{1}{q} - \frac{1}{p}\right) > 0,$$
where $ r \in [1, \infty] $ and $ \omega := \limsup_{t \rightarrow 0^+} \frac{\ln|\gamma(t)|}{\ln t} $. These results are sharp, except for certain borderline cases. Additionally, we examine the $ L_x^p(\mathbb{R}^2) \rightarrow L_u^r L_x^q(\mathbb{R}^2 \times [1, 2]) $ (time-space) estimates for $T $, which are especially almost sharp when $p=2$ or $p\in [1, \frac{3}{2}]\cup [4, \infty]$.
\end{abstract}

\section{Introduction}\label{section:1}

Let $\gamma \in C^{N}(0,1]$ with $ N \in \mathbb{N}$ sufficiently large. We define the operator $T$ along the dilated plane curve $(ut, u\gamma(t))$ as
\[
Tf(x, u) := \int_{0}^{1} f(x_1 - ut, x_2 - u\gamma(t)) \, dt,
\]
which has been extensively studied in works such as Hickman \cite{H}, Beltran, Guo, Hickman, and Seeger \cite{BGHS}, Ko, Lee, and Oh \cite{KLO}, among others. We also consider the maximal operator
\[
M_\gamma f(x) := \sup_{u \in (0, \infty)} \left| Tf(x, u) \right|.
\]
Bourgain \cite{Bour86} established foundational estimates for maximal averages over planar convex curves. Beltran, Guo, Hickman, and Seeger \cite{BGHS}, as well as Ko, Lee, and Oh \cite{KLO}, demonstrated that the maximal average over a curve in three dimensions is bounded on $L_x^p(\mathbb{R}^3)$ if and only if $p > 3$. For higher-dimensional settings, we refer to Ko, Lee and Oh \cite{KLO23}. Related studies on maximal functions associated with hypersurfaces can be found in \cite{SSte, SoSte, IKM} and references therein.

A natural question arises concerning the $ L_x^p(\mathbb{R}^2) $ to $ L_x^q(\mathbb{R}^2) $ boundedness of $ M_\gamma $. It is well-established in various contexts that $ M_\gamma $ is not $ L^p \rightarrow L^q $ bounded unless $ p = q $ (see Liu and Yu \cite[Remark 1.3]{LiuYu}). However, when the supremum is restricted to $ u \in [1, 2] $, they derived an almost sharp $ L_x^p(\mathbb{R}^2) \rightarrow L_x^q(\mathbb{R}^2) $ estimate for $ M_\gamma $, i.e., the $L_x^p(\mathbb{R}^2)\rightarrow L_x^qL^{\infty}_u(\mathbb{R}^2\times [1,2])$ estimate for $T$. This phenomenon, known as $ L^p $-improving, is discussed in greater detail in \cite{TW, Sch, BDH24, LWZ}.

In this paper, we aim to extend the estimates for the maximal function to mixed norm estimates for $ T $. Our primary objective is to establish $ L^p_x(\mathbb{R}^2) \rightarrow L^q_x L^r_u(\mathbb{R}^2 \times [1, 2]) $ (space-time) estimates for $ T $, which is related to $r$-variation estimates for $ T $ (see \cite{BORSS, LaY}). When $ q = r $, there is already a substantial body of literature addressing this case. For example, when $ \gamma(t) := t^2 $, the $ L_x^2(\mathbb{R}^2) \rightarrow L_{x,u}^6(\mathbb{R}^2 \times [1, 2]) $ boundedness of $ T $ was essentially established by Strichartz \cite{Str} and later refined by Schlag and Sogge \cite{SchS}. Additional results on $ L_x^p(\mathbb{R}^2) \rightarrow L_{x,u}^q(\mathbb{R}^2 \times [1, 2]) $ boundedness for $ T $ can be found in Gressman \cite{Gr1, Gr2}. More recently, Li, Liu, Lou, and Yu \cite{LLLY} established almost sharp $ L_x^p(\mathbb{R}^2) \rightarrow L_{x,u}^q(\mathbb{R}^2 \times [1, 2]) $ estimates for $ T $ along a broader class of curves. In higher dimensions, Hickman \cite{H} obtained nearly sharp $ L_x^p(\mathbb{R}^n) \rightarrow L_{x,u}^q(\mathbb{R}^n \times [1, 2]) $ estimates for $ T $ along the moment curve $ (t, t^2, \dots, t^n) $. He also raised the problem of determining the range of $ (\frac{1}{p}, \frac{1}{q}, \frac{1}{r}) $ for which mixed norm estimates for $ T $ hold.

The main purpose of this paper is to consider the mixed norm estimates for $ T $ in two-dimensional case, but
with some general plane curves including the parabola $\gamma(t):=t^2$. Our first result is the following.

\begin{theorem}\label{thm1}
Assume $\gamma \in C^{N}(0,1]$ with $N \in \mathbb{N}$ sufficiently large, $\lim_{t \rightarrow 0^+} \gamma(t) = 0$, and $\gamma$ is monotonic on $(0,1]$. Additionally, suppose $\gamma$ satisfies the following conditions:
\begin{enumerate}\label{curve gamma}
  \item[\rm(i)] there exist positive constants $\{C^{(j)}_{1}\}_{j=1}^{2}$ such that $|\frac{t^{j}\gamma^{(j)}(t)}{\gamma(t)}|\geq C^{(j)}_{1}$  for any $t\in (0,1]$;
  \item[\rm(ii)] there exist positive constants $\{C^{(j)}_{2}\}_{j=1}^{N}$ such that $|\frac{t^{j}\gamma^{(j)}(t)}{\gamma(t)}|\leq C^{(j)}_{2}$ for any $t\in (0,1]$.
\end{enumerate}
Then, there exists a positive constant $C$ depending only on $\gamma$, $p$, $q$ and $r$, such that for all $f \in L_x^{p}(\mathbb{R}^{2})$,
\[
\left\|Tf\right\|_{L_x^q L^r_u(\mathbb{R}^2 \times [1,2])} \leq C \|f\|_{L_x^{p}(\mathbb{R}^{2})},
\]
where $(\frac{1}{p}, \frac{1}{q})$ satisfies
\[
\max\left\{0, \frac{1}{2p} - \frac{1}{2r}, \frac{3}{p} - \frac{r+2}{r}\right\} < \frac{1}{q} \leq \frac{1}{p} < \frac{r+1}{2r}
\]
and
$$1 + (1 + \omega)\left(\frac{1}{q} - \frac{1}{p}\right) > 0$$
with $r \in [1, \infty]$. Moreover, the range of $(\frac{1}{p}, \frac{1}{q})$ is sharp except for some borderlines for all $r \in [1, \infty]$. Here and hereafter, $\omega := \limsup_{t \rightarrow 0^{+}} \frac{\ln|\gamma(t)|}{\ln t}$.
\end{theorem}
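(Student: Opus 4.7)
The plan is to combine a dyadic decomposition in $t$ near the origin with frequency-space techniques, reducing the problem to a uniform mixed-norm estimate for a model operator associated with a smooth curve of non-vanishing curvature, and then to sum the pieces using the $\omega$-condition. Writing $[0,1]=\bigsqcup_{k\ge 0}[2^{-k-1},2^{-k}]$ I split $T=\sum_{k\ge 0} T^{k}$. On the $k$-th piece the substitution $t=2^{-k}s$ together with the anisotropic dilation $x_1=2^{-k}y_1$, $x_2=\gamma(2^{-k})y_2$ transforms $T^{k}$ into a model operator $\wz T^{k}$ associated with the renormalized curve $\wz\gamma_k(s):=\gamma(2^{-k}s)/\gamma(2^{-k})$ on $s\in[1/2,1]$. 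Conditions (i) and (ii) on $\gamma$, together with its monotonicity, ensure that $\{\wz\gamma_k\}$ forms a family with uniform $C^N$ control and uniformly non-vanishing second derivative. The resulting scaling identity reads
\[
\|T^{k}f\|_{L^q_xL^r_u}\ls 2^{-k}\bigl(2^{-k}|\gamma(2^{-k})|\bigr)^{\f{1}{q}-\f{1}{p}}\|\wz T^{k}\|_{L^p\to L^qL^r}\|f\|_{L^p_x}.
\]

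The heart of the argument is to prove $\|\wz T^{k}\|_{L^p\to L^qL^r}\ls 1$ uniformly in $k$ throughout the open polygon described by the theorem (apart from the $\omega$ constraint). A Littlewood--Paley decomposition in the $x$-frequency $\xi$ splits $\wz T^{k}=\sum_{j\ge 0}\wz T^{k}_{j}$, and each $\wz T^{k}_{j}$ is a Fourier integral operator whose canonical relation projects to the cone over $\wz\gamma_k$. The three nontrivial constraints on $(\f{1}{p},\f{1}{q})$ arise from distinct mechanisms: the upper bound $\f{1}{p}<\f{r+1}{2r}$ is the planar local-smoothing threshold and is furnished by $\ell^2$-decoupling for the cone, uniformly in $k$ thanks to the uniform control of $\{\wz\gamma_k\}$; the line $\f{1}{q}>\f{1}{2p}-\f{1}{2r}$ comes from interpolation between the trivial $L^\infty$ bound and an $L^2_x\to L^2_xL^r_u$ estimate proved via Plancherel and the stationary-phase bound $O(|\xi|^{-1/2})$ on the multiplier $m(\xi,u):=\int e^{-iu\xi\cdot(s,\wz\gamma_k(s))}\phi(s)\,ds$; and $\f{1}{q}>\f{3}{p}-\f{r+2}{r}$ follows by further Riesz--Thorin interpolation of the decoupling estimate with the sharp $L^p$-improving bound on a fixed $u$-slice from \cite{LiuYu}.

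Combining the uniform model-operator bound with the scaling identity reduces the theorem to the convergence of $\sum_{k\ge 0} 2^{-k}(2^{-k}|\gamma(2^{-k})|)^{\f{1}{q}-\f{1}{p}}$. The definition of $\omega$ yields $|\gamma(2^{-k})|\gs 2^{-k(\omega+\epsilon)}$ for every $\epsilon>0$ and all large $k$, so together with $\f{1}{q}\le\f{1}{p}$ this series converges exactly when $1+(1+\omega)(\f{1}{q}-\f{1}{p})>0$. Sharpness of the four active boundary lines is verified by standard extremizers: a bump concentrated on a $\delta\times|\gamma(\delta)|$ rectangle at the origin forces the $\omega$-line; a Knapp-type function on an $N^{-1}\times N^{-2}$ rectangle tested against $u\in[1,2]$ saturates the local-smoothing line $\f{1}{p}=\f{r+1}{2r}$; and focusing/rescaling examples saturate the two remaining lines. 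I expect the principal obstacle to be the uniform-in-$k$ decoupling estimate in step 2 across the full open polygon, since the underlying cone depends on $k$; however, the uniform $C^N$ and curvature control over $\{\wz\gamma_k\}$ keeps the decoupling constants uniform, after which the remaining arguments are routine interpolation and summation.
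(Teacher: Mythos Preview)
Your reduction---dyadic split in $t$, anisotropic rescaling to uniform model operators, Littlewood--Paley in frequency, and summation via $\sum_k 2^{-k}(2^{-k}|\gamma(2^{-k})|)^{1/q-1/p}<\infty$---matches the paper exactly. The gap is in your argument for the uniform bound $\|\wz T^k_j\|_{L^p\to L^q_xL^r_u}\ls 2^{-\epsilon j}$ on the full open polygon.

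That polygon has vertices $\textrm{O}=(0,0)$, $\textrm{A}=(\f{r+1}{2r},\f{r+1}{2r})$, $\textrm{B}=(\f{r+1}{2r},\f{r-1}{2r})$, $\textrm{C}=(\f{2r+3}{5r},\f{r-1}{5r})$, $\textrm{D}=(\f{1}{r},0)$. Your three tools are (a) decoupling/local smoothing, which produces diagonal $L^p\to L^p_{x,u}$ information; (b) Plancherel with stationary phase, which produces $L^2\to L^2_xL^r_u$, again on the diagonal $p=q$; and (c) the result of \cite{LiuYu}, which is the $r=\infty$ case $L^p\to L^q_xL^\infty_u$. Interpolating among these keeps you inside the convex hull of the diagonal $\{p=q\}$ and the $r=\infty$ triangle $\textrm{O}$--$(\f12,\f12)$--$(\f25,\f15)$; for finite $r$ you cannot reach $\textrm{B}$, $\textrm{C}$, or $\textrm{D}$. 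In particular, your claim that interpolating the trivial $L^\infty$ bound with the $L^2\to L^2L^r$ bound yields the line $\f{1}{q}=\f{1}{2p}-\f{1}{2r}$ is incorrect: both endpoints satisfy $p=q$, so every interpolant stays on the diagonal.

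The missing ingredient is a pointwise kernel bound. After stationary phase the frequency-$2^j$ piece has kernel
\[
|\wz K_j(x,u)|\ \ls\ \frac{2^j}{\bigl(1+2^j\bigl||x|-u\bigr|\bigr)^N},
\]
i.e.\ it concentrates on a $2^{-j}$-annulus around $|x|=u$. Integrating in $u$ first gives $\|\wz T^k_jf\|_{L^\infty_xL^1_u}\ls\|f\|_{L^1}$ with \emph{no} $2^j$ loss. This single estimate, interpolated against $L^\infty\to L^\infty L^\infty$, against $L^2\to L^2L^\infty$, and against $L^{5/2}\to L^5L^\infty$, is exactly what produces $\textrm{D}$, $\textrm{B}$, and $\textrm{C}$ respectively. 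It is not a consequence of decoupling, Plancherel, or fixed-time $L^p$-improving; you need to add this kernel analysis as a separate step.

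Your sharpness examples are also too coarse: a Knapp bump or a focusing example does not distinguish the $L^r_u$ norms for different finite $r$, so it cannot pin down the $r$-dependent boundary lines. The paper proceeds indirectly: assuming $T:L^p\to L^q_xL^r_u$, a Sobolev-type embedding in $u$ (costing a factor $2^{j/r}$) converts this into an $L^p\to L^q_xL^\infty_u$ bound on the frequency-localized pieces, and comparison with the known \emph{sharp} exponent in that $r=\infty$ estimate forces $\f{1}{r}\ge\max\bigl\{\f{1}{p}-\f{2}{q},\,\f{3}{2p}-\f{1}{2q}-\f{1}{2},\,\f{2}{p}-1\bigr\}$, which rearranges to the three stated constraints.
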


\begin{remark}
Here are some examples of curves satisfying the conditions (i) and (ii) of Theorem \ref{thm1}. We may add a characteristic function $\chi_{(0,\epsilon_0]}(t)$ if necessary, where $\epsilon_0$ is small enough.
\begin{enumerate}
\item[\rm(1)] $\gamma_1(t):=t^d$, where $d\in(0,\infty)$ and $d\neq1$;
\item[\rm(2)] $\gamma_2(t):=a_dt^d+a_{d+1}t^{d+1}+\cdots+a_{d+m}t^{d+m}$, where $d\geq 2$, $d\in\mathbb{N}$ and $m\in\mathbb{N}_0$ , $a_d\neq 0$, i.e., $\gamma_2$ is a polynomial of degree at least $d$ with no linear term and constant term;
\item[\rm(3)] $\gamma_3(t):=\sum_{i=1}^{d}\beta_i t^{\alpha_i}$, where $\alpha_i\in(0,\infty)$ for all $i=1,2, \cdots, d$, $\min_{i\in\{1,2, \cdots, d\}}\{\alpha_i\}_{i=1}^{d}\neq 1$ and $d\in \mathbb{N}$;
\item[\rm(4)] $\gamma_4(t):=1-\sqrt{1-t^2}$, $t\sin t$, $t-\sin t$, $1-\cos t$, or $e^t-t-1$, or $t^{d}\ln(1+t)$ with $d\in(0,\infty)$;
\item[\rm(5)] $\gamma_5(t)$ is a smooth function on $[0,1]$ satisfying $\gamma(0)=\gamma'(0)=\cdots=\gamma^{(d-1)}(0)=0$ and $\gamma^{(d)}(0)\neq0$, where $d\geq 2$ and $d\in\mathbb{N}$. From Iosevich \cite{Iose}, we know that $\gamma_5$ is finite type $d$ at $t=0$. $\gamma_2$ and $\gamma_4$ provide some special cases of $\gamma_5$.
\end{enumerate}
\end{remark}

\begin{remark}
The range of $(\frac{1}{p}, \frac{1}{q})$ in Theorem \ref{thm1} for all $r \in [1, \infty]$ is illustrated in Figure \ref{Figure:1}. This range corresponds to the open convex hull of the points $\{\textrm{O, A, B, C, D}\}$ but with the open line segment $(\textrm{O, A})$. From Figure \ref{Figure:1}, it is evident that the range of $(\frac{1}{p}, \frac{1}{q})$ diminishes as $r$ increases. Specifically, when $r = 1$, the range encompasses the entire lower triangle except for the boundary lines $\frac{1}{p} = 1$ and $\frac{1}{q} = 0$. Conversely, when $r = \infty$, the range coincides with the result obtained by Liu and Yu \cite{LiuYu}.
Additionally, the range of $(\frac{1}{p}, \frac{1}{q})$ is observed to shrink as $\omega$ increases. Notably, the condition $1 + (1 + \omega)(\frac{1}{q} - \frac{1}{p}) > 0$ does not impose any restrictions on the range of $(\frac{1}{p}, \frac{1}{q})$ when $\omega \leq \frac{4(r - 1)}{r + 4}$.
\begin{figure}[htbp]
\centering
\includegraphics[width=4in]{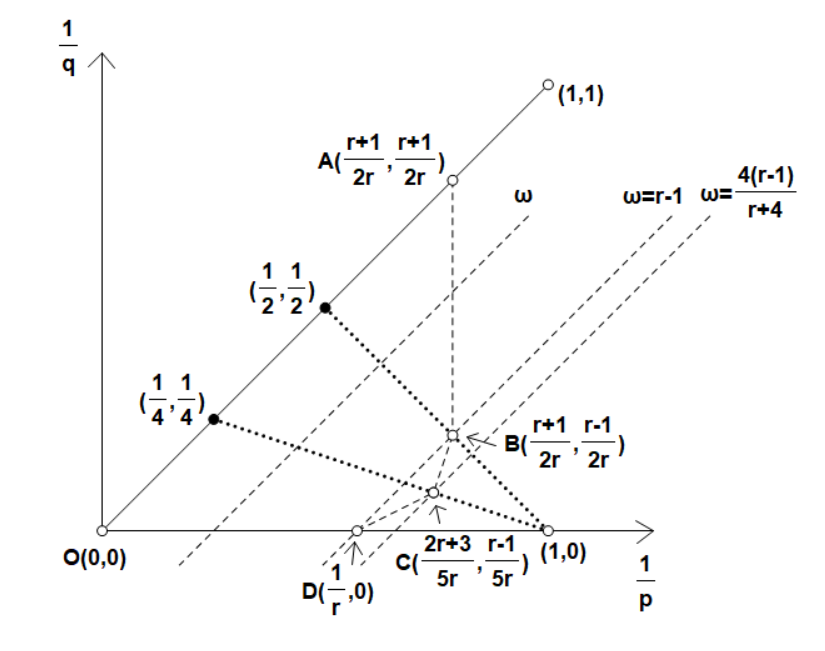}\
\caption{The range of $(\frac{1}{p}, \frac{1}{q})$ in Theorem \ref{thm1} for all $r \in [1, \infty]$.}\label{Figure:1}
\end{figure}
\end{remark}

Another motivation for this paper stems from the study of the maximal function $\mathcal{M}$, defined by averaging over the curve $(t, \gamma(t))$, as follows:
\begin{align*}
\mathcal{M}f(x) := \sup_{\epsilon \in (0, \infty)} \frac{1}{2\epsilon} \int_{-\epsilon}^{\epsilon} \left| f(x_1 - t, x_2 - \gamma(t)) \right| \, \mathrm{d}t.
\end{align*}
This operator, along with related families, has been extensively studied. For the case $\gamma(t) := t^2$, Nagel, Riviere, and Wainger \cite{NRW76} demonstrated that $\mathcal{M}$ is bounded on $L_x^p(\mathbb{R}^2)$ for all $p \in (1, \infty]$. Stein \cite{Ste1} extended this result to homogeneous curves, while Stein and Wainger \cite{SW1} investigated smooth curves. For more general families of curves, we refer readers to \cite{SW,CCVWW,CVWW}.

Let $\bar{\psi}: \mathbb{R}^+ \rightarrow \mathbb{R}$ be a smooth function supported on $\{ t \in \mathbb{R}: \frac{1}{8} \leq t \leq 8 \}$, satisfying $0 \leq \bar{\psi}(t) \leq 1$ and $\bar{\psi}(t) = 1$ on $\{ t \in \mathbb{R}: \frac{1}{4} \leq t \leq 4 \}$. Throughout this paper, we denote by $P_k$ the operator on $\mathbb{R}^n$ with multiplier $\bar{\psi}_k(|\xi|) := \bar{\psi}(2^{-k}|\xi|)$, where $k \in \mathbb{N}$. A key estimate required to establish Theorem \ref{thm1} is essentially the following almost sharp space-time estimate for the half-wave propagator:
\begin{align}\label{eq:1.2}
2^{-\frac{1}{2}k} \left\| e^{iu\sqrt{-\Delta}} P_k f \right\|_{L_x^q L^r_u(\mathbb{R}^2 \times [1,2])} \lesssim 2^{-\epsilon k} \| f \|_{L_x^p(\mathbb{R}^2)},
\end{align}
where $\epsilon$ is a positive constant. It is well known that the solution $g$ of the Cauchy problem for the wave equation in $\mathbb{R}^n\times \mathbb{R}$
\begin{equation*}
\begin{cases}
 (\partial^2_u-\Delta)g(x,u)=0, ~~(x,u)\in \mathbb{R}^n\times \mathbb{R}; \\
g(x,0)=f(x), ~~\partial_u g(x,0)=0,
\end{cases}
\end{equation*}
can be written in terms of the half-wave propagator $e^{iu\sqrt{-\Delta}} f$ as
\begin{equation*}
g(x,u)=\frac{1}{2}\left(e^{iu\sqrt{-\Delta}} f(x) +e^{-iu\sqrt{-\Delta}} f (x)\right).
\end{equation*}
Let $\dot{H}_x^s(\mathbb{R}^n)$ denote the homogeneous Sobolev space. The following space-time estimate for the half-wave propagator is of particular interest in harmonic analysis:
\begin{align}\label{eq:1.3}
\left\| e^{iu\sqrt{-\Delta}} f \right\|_{L_x^q L^r_u(\mathbb{R}^n \times \mathbb{R})} \lesssim \| f \|_{\dot{H}_x^s(\mathbb{R}^n)}.
\end{align}
In the present paper, we focus on this estimate for all $f \in L_x^p(\mathbb{R}^2)$ with $p \in [1, \infty]$.

The estimate \eqref{eq:1.3} and its local version have been studied by numerous authors. For instance, when $r = \infty$, \eqref{eq:1.3} represents a maximal function estimate for the half-wave propagator, which is associated with the pointwise convergence of solutions to their initial data in $L_x^2(\mathbb{R}^n)$. Cowling \cite{Cow} proved that \eqref{eq:1.3} holds for all $q = 2$ and $s > \frac{1}{2}$. Rogers and Villarroya \cite{RV} later extended this result to all $q \in (2, \infty]$. For endpoint estimates and historical remarks, see Cho, Lee, and Li \cite{CLL}. When the initial data lies in Sobolev spaces $W_x^{s,p}(\mathbb{R}^n)$, and $q = p$, \eqref{eq:1.3} holds for $p \geq \frac{2(n+1)}{n-1}$ and $s > (n-1)\left| \frac{1}{2} - \frac{1}{p} \right|$, as a corollary of Bourgain and Demeter \cite{BoD}. The two-dimensional case of this estimate can be found in Guth, Wang, and Zhang \cite{GWZ}, while an endpoint result in dimensions $n\geq 4$ was established by Heo, Nazarov, and Seeger \cite{HNS}. When $r = 2$ and $q = p > \frac{2(n+2)}{n}$, \eqref{eq:1.3} is closely related to Stein's square function estimates for Bochner-Riesz operators, as discussed in Lee, Rogers, and Seeger \cite{LRS12}. Additionally, analogous results for Fourier integral operators can be found in \cite{Sog, MSS, MSS92, GLMX}, and for compact manifolds in Beltran, Hickman, and Sogge \cite{B}.

It is also natural to consider the time-space estimate for $T$, commonly referred to as the Strichartz estimate. For the half-wave propagator, this estimate can be expressed as
\begin{align}\label{eq:1.5}
\left\| e^{iu\sqrt{-\Delta}} f \right\|_{L^r_u L_x^q(\mathbb{R}^n \times \mathbb{R})} \lesssim \| f \|_{\dot{H}_x^s(\mathbb{R}^n)},
\end{align}
where it is well-known that \eqref{eq:1.5} holds if $q, r \geq 2$, $\frac{1}{r} + \frac{n}{q} = \frac{n}{2} - s$, and $\frac{2}{r} + \frac{n-1}{q} \leq \frac{n-1}{2}$, with the exception of $(r, q) = (2, \infty)$ when $n = 3$. For further details and historical remarks, see Keel and Tao \cite{KT}. When $q = \infty$, \eqref{eq:1.5} is also connected to a Kakeya-type problem for circles, particularly when the initial data belongs to $W_x^{\frac{1}{2}, 3}(\mathbb{R}^2)$; see Wolff \cite{Wolff}. Over the past few decades, extensive literature has been devoted to this problem, including \cite{Str, LRV08, LV08, BBGL}.

We now state our second result concerning the time-space estimate for $T$.

\begin{theorem}\label{thm3}
Let $\gamma$ be as defined in Theorem \ref{thm1}. Then, there exists a positive constant $C$  depending only on $\gamma$, $p$, $q$ and $r$, such that for all $f \in L_x^p(\mathbb{R}^2)$,
\begin{align}\label{eq:1.7}
\left\| Tf \right\|_{L^r_u L_x^q(\mathbb{R}^2 \times [1,2])} \leq C \| f \|_{L_x^p(\mathbb{R}^2)},
\end{align}
provided that $(\frac{1}{p}, \frac{1}{q})$ satisfies $1 + (1 + \omega)(\frac{1}{q} - \frac{1}{p}) > 0$ and one of the following conditions holds:
\begin{enumerate}
 \item[$A:$] For $1 \leq p \leq q \leq r \leq \infty$, $(\frac{1}{p}, \frac{1}{q})$ satisfies $\tilde{s}_{A,0}(p, q, r) < 0$;
 \item[$B:$] For $1 \leq p \leq r \leq q \leq \infty$, $(\frac{1}{p}, \frac{1}{q})$ satisfies $\tilde{s}_{B,0}(p, q, r) < 0$, where
   \[
   \tilde{s}_{B,0}(p, q, r) := \min\left\{ \tilde{s}_{A,0}(p, r, r) + 2\left(\frac{1}{r} - \frac{1}{q}\right), \tilde{s}_{A,0}(p, q, q) \right\};
   \]
 \item[$C:$] For $1 \leq r < p \leq q \leq \infty$, $(\frac{1}{p}, \frac{1}{q})$ satisfies $\tilde{s}_{C,0}(p, q, r) < 0$, where
   \[
   \tilde{s}_{C,0}(p, q, r) := \min\left\{ \tilde{s}_{A,0}(p, p, p) + 2\left(\frac{1}{p} - \frac{1}{q}\right), \tilde{s}_{A,0}(p, q, q) \right\}.
   \]
\end{enumerate}
Conversely, \eqref{eq:1.7} holds only if $(\frac{1}{p}, \frac{1}{q})$ satisfies the following conditions:
\begin{enumerate}
  \item[\rm(I)] $\frac{1}{q} \leq \frac{1}{p}$;
  \item[\rm(II)] $1 + (1 + \omega)(\frac{1}{q} - \frac{1}{p}) \geq 0$;
  \item[\rm(III)] $\frac{1}{q} \geq \max\left\{ \frac{2}{p} - 1, \frac{1}{p} - \frac{1}{3} \right\}$;
  \item[\rm(IV)] $\frac{1}{q} \geq \max\left\{ \frac{1}{2p} - \frac{1}{2r}, \frac{2}{p} - \frac{r+1}{r} \right\}$ and $\max\left\{ \frac{1}{r} - \frac{1}{q}, 0 \right\} \geq \tilde{s}_{A,0}(p, q, q)$ for all $r \in [1, \infty]$.
\end{enumerate}
Here, for any $1 \leq p, q, r \leq \infty$, we define
\begin{equation*}
\tilde{s}_{A,0}(p, q, r) :=
\begin{cases}
\frac{1}{p} - \frac{2}{q} - \frac{1}{r}, & \text{for } q \geq 3p'; \\
\frac{1}{p} - \frac{2}{q} + \left( \frac{1}{2p} + \frac{1}{2q} - \frac{1}{2} \right) \frac{q}{r}, & \text{for } p' < q < 3p'; \\
\frac{2}{p} - \frac{1}{q} - 1, & \text{for } q \leq p'.
\end{cases}
\end{equation*}
\end{theorem}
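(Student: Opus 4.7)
The plan is to split into a sufficiency argument based on frequency-localized Strichartz estimates for the half-wave propagator and a necessity argument via Knapp-type counterexamples. After a Littlewood--Paley decomposition $Tf = T_{0}f + \sum_{k\geq 1}TP_{k}f$, stationary phase applied to the symbol $\int_{0}^{1} e^{-iu(t\xi_{1}+\gamma(t)\xi_{2})}\,dt$ --- using hypothesis (i) for non-degeneracy of the critical point and (ii) for uniform control of higher derivatives of $\gamma$ --- identifies each dyadic piece $TP_{k}$ with a rescaled half-wave propagator up to the gain factor $2^{-k/2}$, exactly as in the proof of the space-time estimate \eqref{eq:1.2}. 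The problem is thereby reduced to showing a frequency-localized bound
\[
\bigl\|e^{iu\sqrt{-\Delta}}P_{k}f\bigr\|_{L_{u}^{r}L_{x}^{q}([1,2]\times\R^{2})}\ls 2^{(\tilde s(p,q,r)+\frac12)k}\|P_{k}f\|_{L_{x}^{p}(\R^{2})}
\]
with $\tilde s(p,q,r)<0$, after which the dyadic sum converges; separately, $T_{0}f$ is controlled by a Young-type convolution whose constant is dictated by the scaling of $\gamma$ near the origin, and this latter step is what produces the $\omega$-condition $1+(1+\omega)(\tfrac{1}{q}-\tfrac{1}{p})>0$.

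\textbf{Sufficiency in Cases A, B, C.} For Case A ($p\leq q\leq r$) I would prove the Strichartz bound above with $\tilde s(p,q,r)=\tilde s_{A,0}(p,q,r)$, splitting into the three subregions of its piecewise definition. In $q\geq 3p'$ the flat exponent $\tfrac{1}{p}-\tfrac{2}{q}-\tfrac{1}{r}$ arises from Young's inequality on the FIO kernel; in $q\leq p'$ a complementary argument yields the $r$-independent exponent $\tfrac{2}{p}-\tfrac{1}{q}-1$; the intermediate band $p'<q<3p'$ is then filled by interpolating the Keel--Tao admissible endpoints for the 2D half-wave equation against fixed-time wave propagator $L^{p}\to L^{q}$ bounds, which produces the $\tfrac{q}{r}$-weighted formula appearing in $\tilde s_{A,0}$. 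Case B ($p\leq r\leq q$) is handled by interpolating Case A at $(p,r,r)$ --- combined with a Sobolev $x$-loss $2(\tfrac{1}{r}-\tfrac{1}{q})$ absorbed on the frequency side --- against $(p,q,q)$, producing the minimum defining $\tilde s_{B,0}$. Case C ($r<p\leq q$) is analogous, with $(p,r,r)$ replaced by $(p,p,p)$ and the inclusion $L_{u}^{\infty}([1,2])\subset L_{u}^{r}([1,2])$ used to accommodate the small $r$, giving $\tilde s_{C,0}$.

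\textbf{Necessity and main obstacle.} For the necessary conditions I would produce explicit counterexamples: (I) from translation invariance applied to a smooth bump; (II) from testing on $f=\chi_{[0,\delta]\times [0,|\gamma(\delta)|]}$ and tracking both sides through $\delta\to 0^{+}$, where $\omega$ governs the image-set geometry; (III) from the Liu--Yu $\delta^{2}$-tube family along the curve together with the trivial $L^{1}$ line; and (IV) from two Knapp-type families, one a physical-side tube aligned with the curve and one a frequency-side packet supported on a cap of the light cone, which jointly force the two inequalities in (IV) as well as the matching obstruction $\max\{\tfrac{1}{r}-\tfrac{1}{q},0\}\geq \tilde s_{A,0}(p,q,q)$. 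The principal obstacle will be establishing the Strichartz chart with exponent exactly $\tilde s_{A,0}(p,q,r)$ in the intermediate band $p'<q<3p'$: the argument must blend the Keel--Tao endpoints with fixed-time wave propagator bounds so that the resulting exponent is piecewise-linear and agrees with $\tilde s_{A,0}$ precisely, allowing the dyadic sum to converge exactly when $\tilde s_{A,0}<0$. A closely related point is checking that the interpolation in Cases B and C attains the minimum defining $\tilde s_{B,0}$ and $\tilde s_{C,0}$ rather than a strictly larger intermediate exponent, particularly along the borderlines.
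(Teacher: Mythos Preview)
Your proposal has a structural gap in how the $\omega$-condition enters. You decompose only in frequency, writing $Tf = T_0 f + \sum_{k\geq 1} TP_k f$, and attribute the condition $1+(1+\omega)(\tfrac{1}{q}-\tfrac{1}{p})>0$ to the low-frequency piece $T_0$. This is incorrect: the kernel of $T_0$ is the convolution of the curve measure with a Schwartz function, hence Schwartz, so $T_0:L^p\to L^q$ for every $p\leq q$ with no restriction coming from $\gamma$. The $\omega$-condition instead reflects the degeneration of the curve $(t,\gamma(t))$ as $t\to 0^+$, and it cannot be captured by a pure frequency decomposition. The paper first decomposes dyadically in the curve parameter $t$, passing to rescaled operators $\widetilde T_j$ along the curves $\Gamma_j(t)=\gamma(2^j t)/\gamma(2^j)$ on $t\in[\tfrac12,2]$; Lemma~\ref{lemma 2.1} gives \emph{uniform} curvature bounds for the $\Gamma_j$, so the stationary-phase/FIO analysis applies to each $\widetilde T_j$ with constants independent of $j$. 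The $\omega$-condition is precisely what makes the $j$-sum $\sum_{j\le 0} 2^j |2^j\gamma(2^j)|^{1/q-1/p}$ converge (Lemma~\ref{lemma 2.3}). Without this $t$-decomposition your stationary-phase step fails: $\gamma''(t)$ is not bounded above and below on $(0,1]$, so the claimed frequency-localized bound $\|TP_k f\|_{L^r_uL^q_x}\lesssim 2^{\tilde s k}\|f\|_{L^p}$ cannot hold uniformly---indeed if it did, summing in $k$ would give $T:L^p\to L^r_uL^q_x$ whenever $\tilde s_{A,0}<0$, contradicting the necessary condition (II).

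A second, smaller discrepancy concerns Case~A. The paper does not obtain $\tilde s_{A,0}$ from Young plus Keel--Tao; it interpolates the diagonal bound $F_{j,k}:L^p\to L^q_uL^q_x$ with exponent $s_{-1/2,\epsilon}(p,q,q)$ (Lemma~\ref{lemma3.3}, local smoothing for cinematically curved FIOs) against the fixed-time bound $F_{j,k}:L^p\to L^\infty_uL^q_x$ with exponent $\bar s_{2,-1/2}(p,q)$ (Lemma~\ref{lemma5.1}). Your Keel--Tao route would at best recover the $L^2$-based Strichartz endpoints and would not directly produce the piecewise-linear $\tilde s_{A,0}$ in the band $p'<q<3p'$. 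For necessity (IV), the paper also does not build Knapp examples: it argues indirectly, using a Sobolev-type inequality in $u$ (Lemma~\ref{lemma5.a}) to pass from the assumed $L^r_uL^q_x$ bound to an $L^\infty_uL^q_x$ bound with loss $2^{k/r}$, and then invokes the \emph{sharpness} of the fixed-time and $L^q_{x,u}$ FIO exponents to force $\tfrac1r\geq \bar s_{2,-1/2}(p,q)$ and $\max\{\tfrac1r-\tfrac1q,0\}\geq \tilde s_{A,0}(p,q,q)$. Finally, in Case~C the paper notes that the partition-of-unity localization to $F_{j,k}$ used in Cases A and B fails (Minkowski goes the wrong way when $r<q$), so one must work with $\widetilde T^{1}_{j,k}$ directly before applying H\"older in $u$; your sketch does not flag this.
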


\begin{remark}
From the definition of $\tilde{s}_{A,0}$ with $p' < q < 3p'$, it is evident that Theorem \ref{thm3} is not always sharp, since the indices $\frac{1}{p}$, $\frac{1}{q}$ and $\frac{1}{r}$ in $\tilde{s}_{A,0}$ is not a linear combination. However, by interpolating between the vertices of the range of $(\frac{1}{p}, \frac{1}{q}, \frac{1}{r})$, we may obtain some sharp results. It is natural to investigate what occurs in Theorem \ref{thm3} when $p = 2$. By interpolating between the points $(\frac{1}{q}, \frac{1}{r})=(\frac{1}{6}, \frac{1}{6})$ and $(\frac{1}{4}, 0)$, we obtain that
\begin{align}\label{eq:1.8}
\left\|Tf\right\|_{L^r_uL_x^q(\mathbb{R}^2\times [1,2])} \lesssim \|f\|_{L_x^{2}(\mathbb{R}^{2})}
\end{align}
holds if $(\frac{1}{q}, \frac{1}{r})$ lies on the closed line segment $[(\frac{1}{2}, 1), (\frac{1}{2}, 0)]$ or within the open pentagon with vertices $(\frac{1}{6}, \frac{1}{6})$, $(\frac{1}{2}, 1)$, $(\frac{1}{2}, 0)$, and $(\frac{1}{4}, 0)$. Moreover, this estimate is sharp except the boundary lines. The range of $(\frac{1}{q}, \frac{1}{r})$ is illustrated in Figure \ref{Figure:2}.

\begin{figure}[htbp]
  \centering
  \includegraphics[width=3.5in]{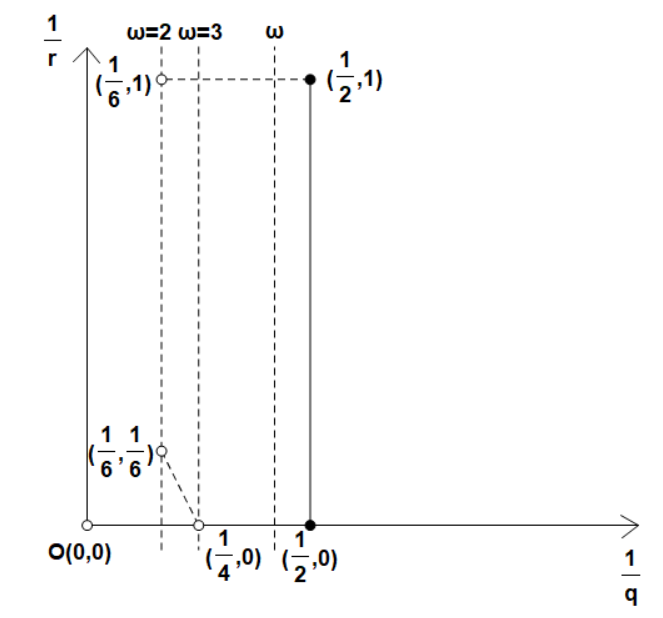}
  \caption{The range of $(\frac{1}{q}, \frac{1}{r})$ in Theorem \ref{thm3} when $p = 2$.}
  \label{Figure:2}
\end{figure}

Furthermore, a direct computation reveals that Theorem \ref{thm3} is almost sharp for all $p \in [1, \frac{3}{2}] \cup [4, \infty]$. The corresponding range of $(\frac{1}{q}, \frac{1}{r})$ can be found in Figure \ref{Figure:3}.

\begin{figure}[htbp]
  \centering
  \includegraphics[width=5in]{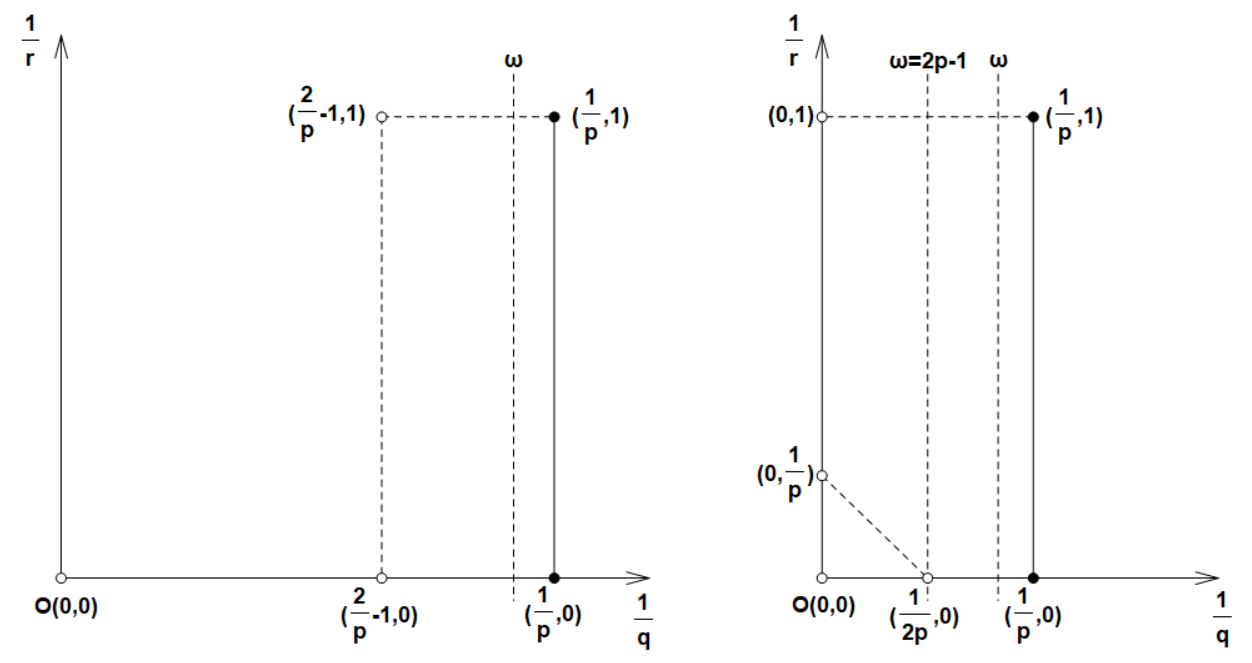}
  \caption{The range of $(\frac{1}{q}, \frac{1}{r})$ in Theorem \ref{thm3} for $p \in [1, \frac{3}{2}]$ and $p \in [4, \infty]$.}
  \label{Figure:3}
\end{figure}
\end{remark}

\begin{remark}
We present the detailed result in Theorem \ref{thm3} with $r = 2$. Combining Theorem \ref{thm3} with Remark \ref{remark5.1}, if $\left( \frac{1}{p}, \frac{1}{q} \right)$ lies on the open line segment $((0,0), (1,1))$ or within the open (dashed) pentagon with vertices $(0,0)$, $(1,1)$, $\left( \frac{2}{3}, \frac{1}{3} \right)$, $\left( \frac{1}{2}, \frac{1}{6} \right)$, and $\left( \frac{1}{4}, 0 \right)$, we have
\begin{align}\label{eq:1.9}
\left\|Tf\right\|_{L^2_uL_x^q(\mathbb{R}^2\times [1,2])} \lesssim \|f\|_{L_x^{p}(\mathbb{R}^{2})}.
\end{align}
Conversely, \eqref{eq:1.9} holds only if $\left( \frac{1}{p}, \frac{1}{q} \right)$ belongs to the closed (black) quadrilateral with vertices $(0,0)$, $(1,1)$, $\left( \frac{2}{3}, \frac{1}{3} \right)$, and $\left( \frac{1}{3}, 0 \right)$, see Figure \ref{Figure:4}. We note that if $\gamma(t) := t^d$ with $d \geq 3$, the time-space estimate $T : L_x^p(\mathbb{R}^2) \rightarrow L^{2}_uL_x^q(\mathbb{R}^2 \times \mathbb{R})$ established in Theorem \ref{thm3} is essentially sharp. However, if $\gamma(t) := t^d$ with $d \in (0,1) \cup (1,2)$, the condition $1 + (1 + \omega) \left( \frac{1}{q} - \frac{1}{p} \right) > 0$ is redundant.
\begin{figure}[htbp]
  \centering
  \includegraphics[width=3.8in]{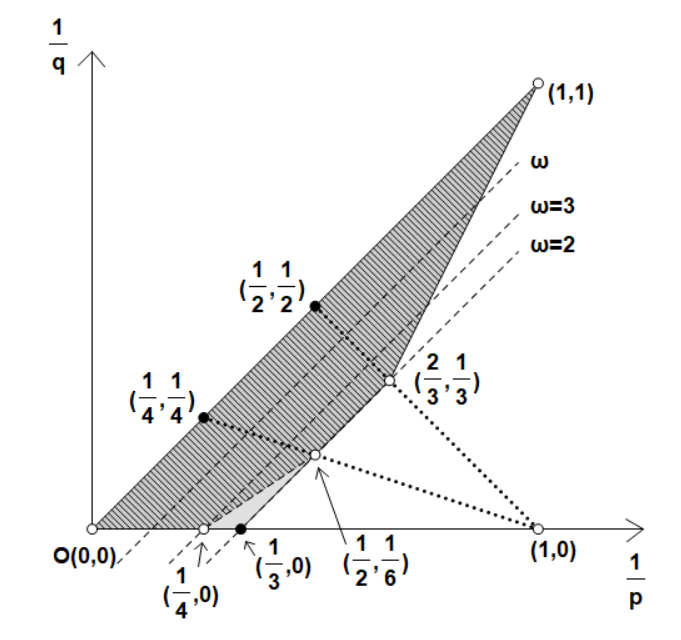}
  \caption{The range of $\left( \frac{1}{p}, \frac{1}{q} \right)$ in Theorem \ref{thm3} with $r = 2$.}
  \label{Figure:4}
\end{figure}
\end{remark}

The main contribution of this paper is to establish the optimal range of $\left( \frac{1}{p}, \frac{1}{q}, \frac{1}{r} \right)$ for the space-time estimate of $T$, as stated in Theorem \ref{thm1}. Indeed, the range of $\left( \frac{1}{p}, \frac{1}{q} \right)$ for $T : L_x^p(\mathbb{R}^2) \rightarrow L_x^qL^{\infty}_u(\mathbb{R}^2 \times [1,2])$ determines all other ranges of $\left( \frac{1}{p}, \frac{1}{q}, \frac{1}{r} \right)$. To achieve this result we need the sharp estimate for
\begin{align*}
F_{j,k}f(x,u) := \Omega(x,u) \int_{\mathbb{R}^2} e^{ix \cdot \xi} e^{i \varphi_j(u,\xi,t_0)} m_{j,k}(u,\xi) \hat{f}(\xi) \, d\xi,
\end{align*}
defined in \eqref{eq:2.10}, which is essentially a Fourier integral operator with phase function $x \cdot \xi - u |\xi|$. Thus it is basically a half-wave propagator. These estimates for $F_{j,k}$ generalize the results of Mockenhaupt, Seeger, and Sogge \cite[Theorem 3.1]{MSS}, and Lee \cite[Corollary 1.5]{Lee06} when we restrict our attention to $P_k$.

It is evident that the time-space estimate for $T$ differs from the corresponding space-time estimate. Firstly, the latter is translation-invariant when the temporal integral is evaluated before the spatial integral, but not the former. Secondly, as stated in Subsection 4.1, the time-space estimate for $T$ not always can be reduced to the corresponding time-space estimate similar to \eqref{eq:1.2}. Thirdly, we can only obtain a common estimate $$
\|F_{j,k}f\|_{L^{r}_uL_x^{\infty}(\mathbb{R}^2\times \mathbb{R}) } \lesssim2^k\|f\|_{L_x^{1}(\mathbb{R}^{2})}$$ for all $r\in[1, \infty]$, not as the corresponding space-time estimate can be improved directly when $r=1$; see \eqref{eq:3.3}. Our main tools will be interpolation, a fixed time estimate for $F_{j,k}$ based on Seeger, Sogge and Stein \cite[Theorem 2.1]{SSS}, Gao, Liu, Miao and Xi \cite[Theorem 1.4]{GLMX} and some results stated in Stein \cite[Chapters VIII and IX]{Stein}.

To establish the necessity part in Theorem \ref{thm3}, we utilized the sharp range of $\left( \frac{1}{p}, \frac{1}{q} \right)$ in $T : L_x^p(\mathbb{R}^2) \rightarrow L_{x,u}^q(\mathbb{R}^2 \times [1,2])$, not only the sharp range of $(\frac{1}{p}, \frac{1}{q})$ in $T : L_x^p(\mathbb{R}^2) \rightarrow L^{\infty}_uL_x^q(\mathbb{R}^2 \times [1,2])$. We also constructed some examples inspired by previous papers.  However, it should be noted that Theorem \ref{thm3} may not be sharp in some case of $p \in \left( \frac{3}{2}, 2 \right) \cup (2, 4)$. To be more precise, it follows from Theorem \ref{thm3} that
\begin{align}\label{eq:1.10}
\left\|Tf\right\|_{L^p_uL_x^{\infty}(\mathbb{R}^2\times [1,2])} \lesssim \|f\|_{L_x^{p}(\mathbb{R}^{2})}
\end{align}
holds if $\max\{1+\omega, 4\}<p\leq\infty$ and only if $\max\{1+\omega, 3\}\leq p\leq\infty$. Thus the range of $(\frac{1}{p}, \frac{1}{q}, \frac{1}{r})$ in \eqref{eq:1.7} is not sharp. Moreover, we can conjecture that the infimum of $p$ in \eqref{eq:1.10} is $\max\{1+\omega, 3\}$ by the estimate $(2)$ established in Wolff \cite{Wolff}. From Theorem \ref{thm3}, we also note that \eqref{eq:1.10} is essentially sharp if $\gamma(t):=t^d$ with $d\geq 3$.

This paper is organized as follows: In Section 2, we provide preliminaries based on prior papers \cite{LiuYu, LLLY} and reduce our estimates to a Fourier integral operator $F_{j,k}$. In Section 3, we prove Theorem \ref{thm1} by addressing the sufficiency and necessity in Subsections 3.1 and 3.2, respectively. In Section 4, we discuss the time-space estimate for $T$, i.e., Theorem \ref{thm3}. Specifically, Subsection 4.1 elaborates on the estimate \eqref{eq:1.7} by analyzing three cases: $1 \leq p \leq q \leq r \leq \infty$, $1 \leq p \leq r \leq q \leq \infty$, and $1 \leq r < p \leq q \leq \infty$. The necessary conditions are verified in Subsection 4.2.

Finally, we establish some notation conventions. Throughout this paper, the letter $C$ denotes a positive constant, independent of the essential variables, but whose value may vary from line to line. We use $C_{(u, N, \alpha, \ldots)}$ to denote a positive constant depending on the parameters $u, N, \alpha, \ldots$. The notation $a \lesssim b$ (or $a \gtrsim b$) indicates that there exists a finite positive constant $C$ such that $a \leq Cb$ (or $a \geq Cb$). $a \approx b$ means both $a \lesssim b$ and $b \lesssim a$ hold. For $x \in \mathbb{R}^n$ and $r \in (0, \infty)$, $B(x, r) := \{ y \in \mathbb{R}^n : |x - y| < r \}$ and $B^{\complement}(x, r)$ is its complement in $\mathbb{R}^n$. Let $\mathbb{S}^{n-1}$ denote the unit sphere on $\mathbb{R}^n$. $\mathcal{S}(\mathbb{R}^n)$ denotes the collection of all Schwartz functions on $\mathbb{R}^n$. $\hat{f}$ and $f^{\vee}$ denote the Fourier transform and inverse Fourier transform of $f$, respectively. For $1 < q \leq \infty$, $q'$ denotes the adjoint number of $q$, i.e., $\frac{1}{q} + \frac{1}{q'} = 1$. $\mathbb{Z}_0^{-} := \{ 0, -1, -2, \ldots \}$, $\mathbb{N} := \{ 1, 2, \ldots \}$, $\mathbb{N}_0 := \{ 0, 1, 2, \ldots \}$, and $\mathbb{R}^{+} := (0, \infty)$. For any set $E$, $\chi_E$ denotes the characteristic function of $E$.

\section{Preliminaries}\label{preliminaries}

The proofs of Theorems \ref{thm1} and \ref{thm3} rely on some results established in previous papers \cite{LiuYu, LLLY}. For ease of reading, we provide a sketch of reducing the estimate to a Fourier integral operator in this section. Detailed discussions can be found in \cite{LiuYu, LLLY}. We begin by collecting some lemmas.

\begin{lemma}\label{lemma 2.1}
(\cite[Lemma 2.2]{LSY} or \cite[Lemma 2.2]{LYu}) Let $\gamma$ be defined as in Theorem \ref{thm1} and let $\Gamma_{j}(t) := \frac{\gamma(2^jt)}{\gamma(2^j)}$ with $j \in \mathbb{Z}_0^{-}$ and $t \in [\frac{1}{2}, 2]$. The following inequalities hold uniformly in $j$:
\begin{enumerate}
  \item[\rm(i)] $e^{-C^{(1)}_{2}} \leq \Gamma_{j}(t) \leq e^{C^{(1)}_{2}}$;
  \item[\rm(ii)] $\frac{C^{(1)}_1}{2e^{C^{(1)}_{2}}} \leq |\Gamma_{j}'(t)| \leq 2e^{C^{(1)}_{2}}C^{(1)}_2$;
  \item[\rm(iii)] $\frac{C^{(2)}_1}{4e^{C^{(1)}_{2}}} \leq |\Gamma_{j}''(t)| \leq 4e^{C^{(1)}_{2}}C^{(2)}_{2}$;
  \item[\rm(iv)] $|\Gamma_{j}^{(k)}(t)| \leq 2^{k} e^{C^{(1)}_{2}}C^{(k)}_{2}$ for all $2 \leq k \leq N$ and $k \in \mathbb{N}$;
  \item[\rm(v)] $|((\Gamma_{j}')^{-1})^{(k)}(t)| \lesssim 1$ for all $0 \leq k < N$ and $k \in \mathbb{N}$, where $(\Gamma_{j}')^{-1}$ is the inverse function of $\Gamma_{j}'$.
\end{enumerate}
\end{lemma}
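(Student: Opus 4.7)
The plan is to prove all five parts by establishing a single scale-invariant identity and then invoking hypotheses (i) and (ii) on $\gamma$ directly. Starting from the definition, for $1 \leq k \leq N$ one has $\Gamma_j^{(k)}(t) = 2^{jk}\gamma^{(k)}(2^j t)/\gamma(2^j)$, which I would rewrite in the scale-invariant form
\begin{equation*}
\Gamma_j^{(k)}(t) \;=\; \frac{1}{t^k}\cdot\frac{(2^j t)^k\gamma^{(k)}(2^j t)}{\gamma(2^j t)}\cdot\Gamma_j(t).
\end{equation*}
This isolates three factors: a harmless prefactor (since $t\in[\tfrac{1}{2},2]$ gives $1/t^k\in[2^{-k},2^k]$), a term evaluated at $2^jt\in(0,1]$ that is pointwise controlled by the structural hypotheses on $\gamma$, and the ratio $\Gamma_j(t)$ itself. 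Consequently (i) of the lemma has to be proved first, and then (ii)--(iv) follow by inserting the corresponding bounds from hypotheses (i), (ii) of Theorem~\ref{thm1}.

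For (i), the hypothesis that $\gamma$ is monotonic on $(0,1]$ with $\lim_{t\to 0^+}\gamma(t)=0$ ensures that $\gamma$ has constant sign near the origin, so $\Gamma_j(t)>0$ and we may take logarithms. The key step is the change of variables
\begin{equation*}
\ln\Gamma_j(t) \;=\; \int_1^t \frac{\Gamma_j'(s)}{\Gamma_j(s)}\,ds \;=\; \int_1^t \frac{(2^j s)\,\gamma'(2^j s)}{\gamma(2^j s)}\cdot\frac{ds}{s},
\end{equation*}
after which hypothesis (ii) of Theorem~\ref{thm1} with $j=1$ bounds the inner factor by $C_2^{(1)}$ and yields $|\ln\Gamma_j(t)|\leq C_2^{(1)}|\ln t|\leq C_2^{(1)}\ln 2\leq C_2^{(1)}$, proving (i). With (i) in hand, the two-sided bounds in (ii) and (iii) drop out of the scale-invariant identity: combine hypothesis (i) of Theorem~\ref{thm1} (for the lower bound on $|(2^jt)^k\gamma^{(k)}(2^jt)/\gamma(2^jt)|$ with $k=1,2$) with the two-sided bound on $\Gamma_j(t)$ just established. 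The upper bound in (iv) is identical but invokes only hypothesis (ii) of Theorem~\ref{thm1}.

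For (v), I would first observe that (ii) of the lemma delivers $|\Gamma_j'|\geq C_1^{(1)}/(2e^{C_2^{(1)}})>0$ uniformly in $j$ and that (iii) shows $\Gamma_j''$ has constant sign, so $\Gamma_j'$ is strictly monotonic on $[\tfrac{1}{2},2]$ and $(\Gamma_j')^{-1}$ is of class $C^{N-1}$ on its range. Then the derivatives of $(\Gamma_j')^{-1}$ may be expanded via Fa\`a di Bruno's formula as finite polynomial combinations of $1/\Gamma_j''(\tau)$ and the higher derivatives $\Gamma_j^{(\ell)}(\tau)$ for $2\leq\ell\leq k+1$, each evaluated at $\tau=(\Gamma_j')^{-1}(t)$; every numerator factor is controlled by parts (iii)--(iv) and every denominator factor by the lower bound in (iii), with all constants independent of $j$.

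I do not anticipate a serious analytic obstacle: the only mildly delicate step is (i), which relies on the sign information from the monotonicity assumption and on recognizing that the integrated version of hypothesis (ii) is precisely what is needed. The whole content of the lemma is that hypotheses (i), (ii) of Theorem~\ref{thm1} are exactly the scale-invariant conditions that make the rescaling $t\mapsto 2^j t$ reproduce, uniformly in $j\in\mathbb{Z}_0^{-}$, the same quantitative control one would have at a single fixed scale; the rest is mechanical bookkeeping with the scaling identity and Fa\`a di Bruno's formula.
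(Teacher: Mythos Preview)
Your approach is correct and is precisely the standard argument for this type of rescaling lemma. Note, however, that the paper does not supply its own proof of Lemma~2.1: it is stated with citation to \cite[Lemma~2.2]{LSY} and \cite[Lemma~2.2]{LYu}, so there is no in-paper argument to compare against. Your logarithmic-integral computation for (i), the scale-invariant factorisation for (ii)--(iv), and the Fa\`a di Bruno bookkeeping for (v) are exactly what the cited proofs do, and your write-up identifies the one genuinely non-mechanical step (recognising that hypothesis (ii) of Theorem~\ref{thm1} integrates to give the two-sided bound on $\Gamma_j$).

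One small point worth flagging: your argument silently uses that $2^j t\in(0,1]$ whenever the structural hypotheses on $\gamma$ are invoked, but for $j=0$ and $t\in(1,2]$ this fails on the face of it since $\gamma\in C^N(0,1]$. In practice this is harmless (either $\gamma$ is understood to extend smoothly past $1$, or the dyadic cutoff $\psi$ together with $j\le 0$ keeps the effective argument in $(0,1]$ after the original change of variables), and the cited references handle it the same way; but if you write this out you may want one sentence acknowledging the convention.
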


\begin{lemma}\label{lemma 2.2}
(\cite[Lemma 2.4.2]{So}) Suppose that $F$ is $C^1(\mathbb{R})$. Then, if $q > 1$,
\begin{align*}
\sup_{u \in [1,2]} |F(u)|^q \leq |F(1)|^q + q \left( \int_1^2 |F(u)|^q \, \mathrm{d}u \right)^{\frac{1}{q'}} \left( \int_1^2 |F'(u)|^q \, \mathrm{d}u \right)^{\frac{1}{q}}.
\end{align*}
\end{lemma}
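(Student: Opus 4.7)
The plan is a direct application of the fundamental theorem of calculus followed by H\"older's inequality. For any $v \in [1,2]$, I would first write
\begin{align*}
|F(v)|^{q} = |F(1)|^{q} + \int_{1}^{v} \frac{\mathrm{d}}{\mathrm{d}u}|F(u)|^{q} \, \mathrm{d}u.
\end{align*}
Since $F \in C^{1}(\mathbb{R})$, the chain rule at points where $F(u) \neq 0$ gives $\frac{\mathrm{d}}{\mathrm{d}u}|F(u)|^{q} = q|F(u)|^{q-2}\operatorname{Re}\bigl(\overline{F(u)}F'(u)\bigr)$, and therefore $\bigl|\frac{\mathrm{d}}{\mathrm{d}u}|F(u)|^{q}\bigr| \leq q|F(u)|^{q-1}|F'(u)|$ wherever this derivative exists. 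Passing to absolute values and extending the integration to all of $[1,2]$ yields
\begin{align*}
|F(v)|^{q} \leq |F(1)|^{q} + q\int_{1}^{2} |F(u)|^{q-1}|F'(u)| \, \mathrm{d}u.
\end{align*}

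Next I would apply H\"older's inequality with conjugate exponents $q'$ and $q$ to the remaining integral:
\begin{align*}
\int_{1}^{2} |F(u)|^{q-1}|F'(u)| \, \mathrm{d}u \leq \left(\int_{1}^{2} |F(u)|^{(q-1)q'} \, \mathrm{d}u\right)^{1/q'}\left(\int_{1}^{2} |F'(u)|^{q} \, \mathrm{d}u\right)^{1/q}.
\end{align*}
The key identity $(q-1)q' = q$ identifies the first factor with $\bigl(\int_{1}^{2} |F(u)|^{q} \, \mathrm{d}u\bigr)^{1/q'}$, producing exactly the right-hand side of the claimed inequality. Taking the supremum over $v \in [1,2]$ of the left-hand side then completes the argument, and note that the bound is independent of $v$ because the right-hand side already involves only $F(1)$ and integrals over $[1,2]$.

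The only technical subtlety I expect is the differentiation of $|F|^{q}$ at points where $F$ vanishes, which may fail to be classically differentiable when $0 < q-1 < 1$. I would circumvent this by a standard regularisation: replace $|F|^{q}$ by $(|F|^{2} + \varepsilon)^{q/2}$, which is smooth for $\varepsilon > 0$ and admits the pointwise derivative bound $q(|F|^{2} + \varepsilon)^{(q-2)/2}|F||F'|$, carry out the argument above for the regularised function, and then send $\varepsilon \to 0^{+}$ using monotone or dominated convergence. This regularisation is the only point requiring care; the rest is routine.
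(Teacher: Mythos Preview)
Your argument is correct and is exactly the standard proof: fundamental theorem of calculus applied to $|F|^{q}$ followed by H\"older with exponents $q',q$ and the identity $(q-1)q'=q$. The paper does not supply its own proof of this lemma but simply cites it from Sogge's book, where the argument is precisely the one you outline; your regularisation remark is a reasonable precaution, though for $q>1$ the map $u\mapsto |F(u)|^{q}$ is in fact $C^{1}$ with derivative zero at the zeros of $F$, so it is not strictly needed.
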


\begin{lemma}\label{lemma 2.3}
(\cite[Lemma 2.3]{LiuYu}) Recall that $\omega = \limsup_{t \rightarrow 0^{+}} \frac{\ln|\gamma(t)|}{\ln t}$. Then, for all $(\frac{1}{p}, \frac{1}{q})$ satisfying $\frac{1}{q} \leq \frac{1}{p}$ and $1 + (1 + \omega)(\frac{1}{q} - \frac{1}{p}) > 0$, we have
\begin{align*}
\sum_{j \in \mathbb{Z}_0^{-}} 2^j |2^j \gamma(2^j)|^{\frac{1}{q} - \frac{1}{p}} < \infty.
\end{align*}
\end{lemma}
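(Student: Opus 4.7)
The plan is to reduce the sum to a convergent geometric series by extracting from the definition of $\omega$ a polynomial lower bound on $|\gamma(2^{-n})|$ valid for large $n$; the key observation is that the strict inequality $1+(1+\omega)(\frac{1}{q}-\frac{1}{p})>0$ leaves exactly enough slack to absorb the $\epsilon$ produced by the limsup.

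First I would reindex via $j=-n$ with $n\in\mathbb{N}_0$ to rewrite the series as
\[
S:=\sum_{n=0}^{\infty} 2^{-n}\bigl|2^{-n}\gamma(2^{-n})\bigr|^{\alpha},\qquad \alpha:=\tfrac{1}{q}-\tfrac{1}{p}\le 0.
\]
When $\alpha=0$ the claim is trivial, since $S=\sum_{n\ge 0} 2^{-n}<\infty$, so I may assume $\alpha<0$. Note also that $\omega\ge 0$: since $\gamma(t)\to 0$ we have $\ln|\gamma(t)|<0$ for $t$ near $0$, and $\ln t<0$ on $(0,1)$, so the ratio in the definition of $\omega$ is positive for all small $t$.

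Second, I would apply the definition of limsup directly: for every $\epsilon>0$ there exists $N_{0}\in\mathbb{N}$ such that $\frac{\ln|\gamma(2^{-n})|}{-n\ln 2}\le\omega+\epsilon$ for all $n\ge N_{0}$. Multiplying by the negative quantity $-n\ln 2$ reverses the inequality and yields $|\gamma(2^{-n})|\ge 2^{-n(\omega+\epsilon)}$, whence $|2^{-n}\gamma(2^{-n})|\ge 2^{-n(1+\omega+\epsilon)}$. Raising to the non-positive power $\alpha$ (which reverses the inequality once more) gives
\[
2^{-n}\bigl|2^{-n}\gamma(2^{-n})\bigr|^{\alpha}\le 2^{-n[1+(1+\omega+\epsilon)\alpha]}\qquad (n\ge N_{0}).
\]

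Finally, because the hypothesis $1+(1+\omega)\alpha>0$ is strict, continuity in $\epsilon$ lets me fix $\epsilon>0$ so small (any $\epsilon<\frac{1+(1+\omega)\alpha}{|\alpha|}$ works) that $\sigma:=1+(1+\omega+\epsilon)\alpha>0$. The tail $\sum_{n\ge N_{0}}2^{-n\sigma}$ then converges as a geometric series, while the head $\sum_{0\le n<N_{0}}2^{-n}|2^{-n}\gamma(2^{-n})|^{\alpha}$ is a finite sum of finite terms (monotonicity of $\gamma$ on $(0,1]$ together with $\gamma(t)\to 0$ as $t\to 0^+$ forces $|\gamma(2^{-n})|>0$ for every $n\ge 0$), so $S<\infty$. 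There is no serious obstacle here; the only delicate point is tracking the direction of the inequality through the sign flips induced by $\ln 2^{-n}<0$ and $\alpha\le 0$, and verifying that the strict inequality in the hypothesis really does supply the small but positive slack needed to absorb the $\epsilon$ coming from the limsup.
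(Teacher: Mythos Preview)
Your argument is correct. The reindexing, the use of the $\limsup$ definition to produce the lower bound $|\gamma(2^{-n})|\ge 2^{-n(\omega+\epsilon)}$ for large $n$, and the choice of $\epsilon$ small enough to make the exponent $\sigma=1+(1+\omega+\epsilon)\alpha$ strictly positive are all sound; the sign tracking through the two reversals is handled carefully, and the head of the series is finite because $\gamma$ is monotone with $\gamma(t)\to 0$, hence nonvanishing on $(0,1]$. The aside that $\omega\ge 0$ is true but not actually used anywhere in the argument, so you could drop it.

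As for comparison with the paper: there is nothing to compare. Lemma~2.3 is quoted verbatim from \cite[Lemma~2.3]{LiuYu} and the present paper gives no proof of its own. Your direct geometric-series argument is exactly the natural proof one would expect for this lemma.
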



We consider the space-time estimate for the operator
\begin{align}\label{eq:2.1}
Tf(x,u) = \int_{0}^{1} f(x_1 - ut, x_2 - u \gamma(t)) \, \mathrm{d}t,
\end{align}
where, without loss of generality, we assume \( f \geq 0 \). Let \( \psi \) be a smooth and positive function supported on the interval \( \{ t \in \R : \frac{1}{2} \leq t \leq 2 \} \) such that $\sum_{j \in \Z} \psi_j(t) = 1$ for all  $t > 0$,
where \( \psi_j(t) := \psi(2^{-j}t) \). For any $j\in \mathbb{Z}$, we define
$\delta_jf(x_1,x_2):=f(2^jx_1,\gamma(2^j)x_2)$. Note that
$|2^j\gamma(2^j)|^{\frac{1}{q}}\|\delta_jf\|_{L^{q}(\mathbb{R}^{2})}=\|f\|_{L^{q}(\mathbb{R}^{2})}$,
it suffices to show that
\begin{align*}
\sum_{j \in \Z_0^{-}} 2^j |2^j \gamma(2^j)|^{\frac{1}{q} - \frac{1}{p}} \left\| \widetilde{T}_{j} \right\|_{L_x^{p}(\R^{2}) \rightarrow L_x^q L^r_u(\R^2 \times [1,2])} \lesssim 1,
\end{align*}
where
\begin{align*}
\widetilde{T}_{j}f(x,u) := \int_{0}^{\infty} f(x_1 - ut, x_2 - u \Gamma_j(t)) \psi(t) \, \mathrm{d}t,
\end{align*}
and the definition of \( \Gamma_j \) can be found in Lemma \ref{lemma 2.1}. Consequently, by Lemma \ref{lemma 2.3}, the proof reduces to establishing the estimate
$\widetilde{T}_{j} : L_x^p(\R^2) \rightarrow L_x^q L^r_u(\R^2 \times [1,2])$
uniformly in \( j \in \Z_0^{-} \).

Let \( \Psi^0 := \sum_{k \leq 0} \psi_k \) and define
\begin{align*}
H_j(u, \xi) := \int_{\R} e^{-iu \xi_1 t - i u \xi_2 \Gamma_j(t)} \psi(t) \, \mathrm{d}t,
\end{align*}
where \( \xi := (\xi_1, \xi_2) \). We decompose \( \widetilde{T}_{j} \) as
\begin{align*}
\widetilde{T}_{j}f(x,u) = \widetilde{T}^0_{j}f(x,u) + \sum_{k \geq 1} \widetilde{T}_{j,k}f(x,u),
\end{align*}
where the multipliers of \( \widetilde{T}^0_{j} \) and \( \widetilde{T}_{j,k} \) are \( H_j(u, \xi) \Psi^0(u|\xi|) \) and \( H_j(u, \xi) \psi_k(u|\xi|) \), respectively. For \( \widetilde{T}^0_{j} \), by Lemma \ref{lemma 2.1}, we can obtain that $|\partial_u^{\alpha}\partial^{\beta}_{\xi}(H_j(u, \xi)\Psi^0(u|\xi|))|\lesssim (1+|\xi|)^{-3}$ for all $(\alpha,\beta)\in \mathbb{N}_0\times \mathbb{N}^2_0$ with $|\alpha|\leq 1$ and $|\beta|\leq 3$, where the implicit constant is independent of $u\in[1,2]$ and \( j \in \Z_0^{-} \). Furthermore, by Lemma \ref{lemma 2.2}, H\"older's inequality, and Young's inequality, it follows that
\begin{align}\label{eq:2.4}
\left\| \widetilde{T}^0_{j}f \right\|_{L_x^q L^r_u(\R^2 \times [1,2])} \leq \left\| \widetilde{T}^0_{j}f \right\|_{L_x^q L^{\infty}_u(\R^2 \times [1,2])} \lesssim \| f \|_{L_x^{p}(\R^{2})}
\end{align}
for all \( 1 \leq r \leq \infty \) and \( 1 \leq p \leq q \leq \infty \).

For \( \widetilde{T}_{j,k} \), we further split it into \( \widetilde{T}^1_{j,k} \) and \( \widetilde{T}^2_{j,k} \), where the multipliers of \( \widetilde{T}^1_{j,k} \) and \( \widetilde{T}^2_{j,k} \) are
\[
\rho\left( \frac{|\xi_1|}{|\xi_2|} \right) H_j(u, \xi) \psi_k(u|\xi|)
~~\textrm{and}~~
\left(1 - \rho\left( \frac{|\xi_1|}{|\xi_2|} \right)\right) H_j(u, \xi) \psi_k(u|\xi|),
\]
respectively. Here, \( \rho \in C^{\infty}_{c}(\R^{+}) \) is a smooth cutoff function satisfying \( \rho = 1 \) on the interval
$
[ \frac{C^{(1)}_{1}}{10e^{C^{(1)}_{2}}} , 10 e^{C^{(1)}_{2}} C^{(1)}_2 ].
$
Then, as in \eqref{eq:2.4}, by Lemmas \ref{lemma 2.1} and \ref{lemma 2.2}, we obtain the decay estimate
\begin{align*}
\left\| \widetilde{T}^2_{j,k}f \right\|_{L_x^q L^r_u(\R^2 \times [1,2])} \lesssim 2^{-\frac{k}{q'}} \| f \|_{L_x^{p}(\R^{2})}
\end{align*}
for all \( 1 \leq r \leq \infty \) and \( 1 \leq p \leq q \leq \infty \). Therefore, it suffices to analyze the operator \( \widetilde{T}^1_{j,k} \).

Let the phase function in \( H_j(u, \xi) \) be
$\varphi_j(u,\xi,t) := -u \xi_1 t - u \xi_2 \Gamma_j(t)$.
We focus on the case where there exists a critical point \( t_0 \in \left( \frac{1}{2}, 2 \right) \) of \( \varphi_j \). By the method of stationary phase, we write
\begin{align}\label{eq:2.5}
\widetilde{T}^1_{j,k}f(x,u) = \int_{\R^{2}} e^{ix \cdot \xi} e^{i \varphi_j(u,\xi,t_0)} m_{j,k}(u,\xi) \hat{f}(\xi) \, \mathrm{d}\xi,
\end{align}
where
\begin{align*}
m_{j,k}(u,\xi) := \rho\left( \frac{|\xi_1|}{|\xi_2|} \right) \psi_k(u|\xi|) \int_{\R} e^{-iu \xi_2 t^2 \eta_j(t,t_0)} \psi(t + t_0) \, \mathrm{d}t,
\end{align*}
with
$\eta_j(t,t_0) := \frac{\Gamma''_j(t_0)}{2} + \frac{t}{2} \int_0^1 (1 - \theta)^2 \Gamma'''_j(\theta t + t_0) \, \mathrm{d}\theta$.
Moreover, for all \( (\alpha,\beta) \in \N^2_0 \times \N_0 \) with \( |\alpha| + |\beta| < N \),
\begin{align}\label{eq:2.6}
\left| \partial_{u}^{\beta} \partial_{\xi}^{\alpha} m_{j,k}(u,\xi) \right| \lesssim (1 + |\xi|)^{-\frac{1}{2} - |\alpha|}
\end{align}
uniformly in \( u \in ( \frac{1}{2}, \frac{5}{2} ) \).

We can also express \( \widetilde{T}^1_{j,k} \) as
\begin{align*}
\widetilde{T}^1_{j,k}f(x,u) = \int_{\R^{2}} \widetilde{K}_{j,k}(x - y, u) f(y) \, \mathrm{d}y,
\end{align*}
where the kernel is given by
\begin{align}\label{eq:2.7}
\widetilde{K}_{j,k}(x,u) := \int_{\R^{2}} e^{ix \cdot \xi} e^{i \varphi_j(u,\xi,t_0)} m_{j,k}(u,\xi) \, \mathrm{d}\xi.
\end{align}
Furthermore, we split \( \widetilde{T}^1_{j,k} \) into the sum of \( \widetilde{T}^{1,a}_{j,k} \) and \( \widetilde{T}^{1,b}_{j,k} \), whose kernels are
$
\chi_{B^{\complement}(0,\varpi)} (x) \widetilde{K}_{j,k}(x,u)
$
and
$
\chi_{B(0,\varpi)} (x) \widetilde{K}_{j,k}(x,u),
$
respectively. Here, \( \varpi \) is a positive constant chosen large enough such that
\[
\left| \nabla_{\xi}[(x - y) \cdot \xi + \varphi_j(u,\xi,t_0)] \right| \gtrsim |x - y|
\]
when \( |x - y| \geq \varpi \). The desired decay estimate for \( \widetilde{T}^{1,a}_{j,k} \) can be obtained via integration by parts. It then suffices to consider \( \widetilde{T}^{1,b}_{j,k} \).

Let \( \Omega: \R^2 \times \R \rightarrow \R \) be a nonnegative smooth function such that \( \Omega(x,u) = 1 \) on
\[
\{(x,u) \in \R^2 \times \R : |x| \leq \varpi, ~ u \in [1,2]\}
\]
and vanishes outside
\[
\{(x,u) \in \R^2 \times \R : |x| \leq 2\varpi, ~ u \in (\tfrac{1}{2}, \tfrac{5}{2})\}.
\]
Define
\begin{align}\label{eq:2.8}
F_{j,k}f(x,u) := \Omega(x,u) \int_{\R^2} \widetilde{K}_{j,k}(x - y, u) f(y) \, \mathrm{d}y
\end{align}
and
\begin{align}\label{eq:2.08}
E_{j,k}f(x,u) := \Omega(x,u) \int_{\R^2} \chi_{B(0,\varpi)} (x - y) \widetilde{K}_{j,k}(x - y, u) f(y) \, \mathrm{d}y.
\end{align}
To estimate \( \widetilde{T}^{1,b}_{j,k} \), as in \cite{LiuYu}, it suffices to obtain a corresponding decay space-time estimate for \( F_{j,k} \). Indeed, we partition \( \R^2 \) into \( \bigcup_{i \in \Z^2} B(x_i, \varpi) \) such that \( |x_i - x_{i'}| \approx |i - i'| \varpi \) for any \( i \neq i' \). Then, \( \|\widetilde{T}^{1,b}_{j,k}f\|^q_{L_x^q L^r_u(\R^2 \times [1,2])} \) is bounded from above by
\begin{align}\label{eq:2.9}
\sum_{i \in \Z^2} \left\| \left(E_{j,k} - F_{j,k}\right) \left(f(\cdot - x_i)\right) \right\|^q_{L_x^q L^r_u(\R^2 \times \R)} + \sum_{i \in \Z^2} \left\| F_{j,k} \left(f(\cdot - x_i)\right) \right\|^q_{L_x^q L^r_u(\R^2 \times \R)}.
\end{align}
The first term can be handled by a similar argument as for \( \widetilde{T}^{1,a}_{j,k} \), and the second term reduces to a corresponding decay space-time estimate for \( F_{j,k} \).

Putting things together, we have reduced our estimate to \( F_{j,k} \). We note that
\begin{align}\label{eq:2.10}
F_{j,k}f(x,u) := \Omega(x,u) \int_{\R^2} e^{ix \cdot \xi} e^{i \varphi_j(u,\xi,t_0)} m_{j,k}(u,\xi) \hat{f}(\xi) \, \mathrm{d}\xi,
\end{align}
which is a Fourier integral operator of order \( -\frac{1}{2} \) and satisfies the special assumption that
\begin{align}\label{eq:2.100}
\text{rank}~ \partial^2_{\xi\xi} \left(x \cdot \xi + \varphi_j(u,\xi,t_0)\right) = 1
\end{align}
when \( \xi \neq 0 \).



\section{Proof of Theorem \ref{thm1}}

In this section, we present the proof of Theorem \ref{thm1}, building upon the preliminary results established in Section \ref{preliminaries}. The proof is divided into two parts.

\subsection{Sufficiency of the region $(\frac{1}{p}, \frac{1}{q})$  for all $r \in [1, \infty]$}\label{subsect:3.1}

We begin the proof with the following two lemmas.

\begin{lemma}\label{lemma3.1}
For $\widetilde{K}_{j,k}$ defined in \eqref{eq:2.7} and $N \in \mathbb{N}$ sufficiently large, for all $\alpha \in \mathbb{N}_0$,
there exists a positive constant $C_{(N, \alpha)}$, depending on $N$ and $\alpha$, such that
\begin{align*}
\left|\partial_u^{\alpha} \widetilde{K}_{j,k}(x, u)\right| \leq C_{(N, \alpha)} \frac{2^{(1 + \alpha)k}}{\left(1 + 2^k \left||x| - u\right|\right)^N}
\end{align*}
holds for all $x \in \mathbb{R}^2$ and $u \in (\frac{1}{2}, \frac{5}{2})$.
\end{lemma}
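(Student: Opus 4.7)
The plan is to view $\widetilde K_{j,k}$ as an oscillatory integral with phase $\Phi(x,\xi,u):=x\cdot\xi+\varphi_j(u,\xi,t_0(\xi))$, which is homogeneous of degree one in $\xi$, and to combine a stationary-phase $L^\infty$ bound with non-stationary-phase integration by parts in $\xi$ to extract the decay in $||x|-u|$.

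First I would reduce to the case $\alpha=0$. Since $\varphi_j(u,\xi,t_0)=-u(\xi_1 t_0+\xi_2\Gamma_j(t_0))$ is affine in $u$ and $t_0=t_0(\xi)$ is independent of $u$, each $\partial_u$ either acts on $e^{i\varphi_j}$, producing the factor $-i(\xi_1 t_0+\xi_2\Gamma_j(t_0))$ of size $\lesssim|\xi|\approx 2^k$ on $\mathrm{supp}(m_{j,k})$, or hits $m_{j,k}$ itself, which is controlled by \eqref{eq:2.6}. By the Leibniz rule, $\partial_u^\alpha\widetilde K_{j,k}$ has the same form as $\widetilde K_{j,k}$ but with an amplitude $\widetilde m_{j,k,\alpha}$ satisfying $|\partial_\xi^\beta\widetilde m_{j,k,\alpha}(u,\xi)|\lesssim 2^{\alpha k}(1+|\xi|)^{-1/2-|\beta|}$, after using Lemma~\ref{lemma 2.1} to bound $(t_0,\Gamma_j(t_0))$ and its $\xi$-derivatives uniformly. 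It therefore suffices to establish $|\widetilde K_{j,k}(x,u)|\lesssim 2^k(1+2^k||x|-u|)^{-N}$.

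Next, I would obtain the trivial bound $|\widetilde K_{j,k}|\lesssim 2^k$ via stationary phase. In polar coordinates $\xi=r\omega$, the radial integration has support length $\approx 2^k$ with amplitude of size $\approx 2^{-k/2}$, while the angular integration benefits from nondegenerate stationary points of the angular phase $h(\omega):=\Phi(x,\omega,u)$, whose Hessian is controlled from below by $|\Gamma_j''(t_0)|\gtrsim 1$ (Lemma~\ref{lemma 2.1}(iii)); this contributes an extra $2^{-k/2}$, yielding $2^k$ overall. For the decay, I would integrate by parts $N$ times with the vector field $\mathcal L:=(\nabla_\xi\Phi\cdot\nabla_\xi)/(i|\nabla_\xi\Phi|^2)$. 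By Euler's identity applied to the degree-zero function $(t_0(\xi),\Gamma_j(t_0(\xi)))$, one has $\nabla_\xi\Phi=x-u(t_0(\xi),\Gamma_j(t_0(\xi)))$; combining Lemma~\ref{lemma 2.1} with the angular cutoff $\rho(|\xi_1|/|\xi_2|)$ (which restricts $\xi$ to a fixed cone) yields the crucial lower bound $|\nabla_\xi\Phi|\gtrsim ||x|-u|$ on $\mathrm{supp}(m_{j,k})$. Iterating $\mathcal L^T$ then extracts a factor $(2^k||x|-u|)^{-N}$, while the associated derivative loss is absorbed by the symbol bound on $\widetilde m_{j,k,\alpha}$; combining with the trivial bound in the standard way via cases $||x|-u|\lessgtr 2^{-k}$ yields the claim.

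The principal technical obstacle is verifying the lower bound $|\nabla_\xi\Phi|\gtrsim ||x|-u|$ uniformly on the support. This requires a careful geometric argument exploiting both the bounded geometry of the curve $(t,\Gamma_j(t))$ (Lemma~\ref{lemma 2.1}(i)--(ii)) and the restricted angular cone. The secondary, bookkeeping-type difficulty is controlling the iterated action of $\mathcal L^T$ on $\widetilde m_{j,k,\alpha}$, which reduces to estimating $\xi$-derivatives of $t_0(\xi)=(\Gamma_j')^{-1}(-\xi_1/\xi_2)$ and is handled by Lemma~\ref{lemma 2.1}(v).
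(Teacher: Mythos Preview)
Your overall strategy matches the paper's: rescale to an oscillatory integral with large parameter $2^k$, obtain the bound $2^k$ via angular stationary phase, and extract decay by integration by parts. The gap lies exactly where you anticipate it: the lower bound $|\nabla_\xi\Phi|\gtrsim \big||x|-u\big|$ uniformly on $\mathrm{supp}\,m_{j,k}$ cannot be established, because it is false. For any $\xi^\ast$ in the support, setting $x=u\big(t_0(\xi^\ast),\Gamma_j(t_0(\xi^\ast))\big)$ gives $\nabla_\xi\Phi(\xi^\ast)=0$, whereas $\big||x|-u\big|=u\,\big|\,|(t_0(\xi^\ast),\Gamma_j(t_0(\xi^\ast)))|-1\,\big|$ is in general bounded away from $0$: Lemma~\ref{lemma 2.1} only yields $|(t_0,\Gamma_j(t_0))|\approx 1$, not $=1$ (for $\gamma(t)=t^2$, for instance, $|(t_0,t_0^2)|=t_0\sqrt{1+t_0^2}$ ranges over an interval of length $\approx 1$). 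Pure first-order integration by parts via $\mathcal L$ therefore breaks down near the critical set, and no amount of ``bounded geometry of the curve'' or angular-cone restriction repairs it.

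The paper's proof differs from yours precisely here: it computes the Hessian of $\Phi$ explicitly and shows that every second partial satisfies $|\partial_\xi^\beta\Phi|\gtrsim 1$ for $|\beta|=2$ on the support (each entry is $\approx u/(\xi_2\Gamma_j''(t_0))$). It is this second-order nondegeneracy, together with the gradient identity $\nabla_\xi\Phi=x-u(t_0,\Gamma_j(t_0))$, that is fed into Stein's oscillatory-integral estimate. In effect the angular direction always contributes a van der Corput factor $2^{-k/2}$ regardless of $x$, and one then integrates by parts in the remaining (radial) direction, whose phase derivative at the angular critical point equals $|x-u(t_0,\Gamma_j(t_0))|$. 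What this actually yields is decay in the distance from $x$ to the dilated curve $\{u(t,\Gamma_j(t))\}$; that quantity controls $\big||x|-u\big|$ only up to an additive $O(1)$ shift, so the denominator as literally stated may be slightly optimistic, but the curve-distance version is what is used in the applications \eqref{eq:3.1}--\eqref{eq:3.4}. In your proposal you invoke the Hessian only for the trivial $2^k$ bound; for the decay you must use it as well.
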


\begin{proof}
Note that $t_0 = (\Gamma'_j)^{-1}(-\frac{\xi_1}{\xi_2}) \in (\frac{1}{2}, 2)$. By a change of variables, we can express
\begin{align*}
\widetilde{K}_{j,k}(x, u) = 2^{2k} \int_{\mathbb{R}^2} e^{i2^k x \cdot \xi} e^{i2^k \varphi_j(u, \xi, t_0)} m_{j,k}(u, 2^k \xi) \, \mathrm{d}\xi.
\end{align*}
From $\partial_{\xi_1} t_0 = -\frac{1}{\xi_2} \frac{1}{\Gamma''_j(t_0)}$ and $\partial_{\xi_2} t_0 = \frac{\xi_1}{\xi_2^2} \frac{1}{\Gamma''_j(t_0)}$, a straightforward calculation yields
\begin{align*}
\left\{
\begin{aligned}
&\partial_{\xi_1 \xi_1} (x \cdot \xi + \varphi_j(u, \xi, t_0)) = u \frac{1}{\xi_2} \frac{1}{\Gamma''_j(t_0)}; \\
&\partial_{\xi_1 \xi_2} (x \cdot \xi + \varphi_j(u, \xi, t_0)) = -u \frac{\xi_1}{\xi_2^2} \frac{1}{\Gamma''_j(t_0)}; \\
&\partial_{\xi_2 \xi_1} (x \cdot \xi + \varphi_j(u, \xi, t_0)) = -u \frac{\xi_1}{\xi_2^2} \frac{1}{\Gamma''_j(t_0)}; \\
&\partial_{\xi_2 \xi_2} (x \cdot \xi + \varphi_j(u, \xi, t_0)) = u \frac{\xi_1^2}{\xi_2^3} \frac{1}{\Gamma''_j(t_0)}.
\end{aligned}
\right.
\end{align*}
Combining this with Lemma \ref{lemma 2.1} and the facts that $|\xi| \approx 1$ and $|\xi_1| \approx |\xi_2|$, we obtain
\begin{align*}
\left|\partial^{\beta}_{\xi} (x \cdot \xi + \varphi_j(u, \xi, t_0))\right| \gtrsim 1
\end{align*}
for all multi-indices $\beta$ with $|\beta| = 2$. Noting that $\partial_{\xi} (x \cdot \xi + \varphi_j(u, \xi, t_0)) = x - u(t_0, \Gamma_j(t_0))$ and $|(t_0, \Gamma_j(t_0))| \approx 1$, integration by parts (using Stein \cite[Chapter VIII, Section 2, Proposition 5]{Stein} and \eqref{eq:2.6}) yields
\begin{align*}
\left|\widetilde{K}_{j,k}(x, u)\right| \lesssim \frac{2^k}{\left(1 + 2^k \left||x| - u\right|\right)^N}.
\end{align*}
A simple calculation provides the corresponding estimate for $|\partial_u^{\alpha} \widetilde{K}_{j,k}(x, u)|$. This completes the proof of the lemma.
\end{proof}

\begin{lemma}\label{lemma3.2}
Suppose \(G(x, u) \in C_c^1(\frac{1}{2}, \frac{5}{2})\) with respect to \(u\) for all \(x \in \mathbb{R}^2\). Then, for any \(1 \leq q, r \leq \infty\),
\begin{align*}
\left\|G\right\|_{L_x^q L^\infty_u(\mathbb{R}^2 \times \mathbb{R})} \lesssim \left\|G\right\|^{\frac{1}{r'}}_{L_x^q L^r_u(\mathbb{R}^2 \times \mathbb{R})} \left\|\partial_u G\right\|^{\frac{1}{r}}_{L_x^q L^r_u(\mathbb{R}^2 \times \mathbb{R})}.
\end{align*}
\end{lemma}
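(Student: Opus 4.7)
The plan is to reduce the statement to a one-dimensional Gagliardo-Nirenberg-type inequality applied pointwise in $x$, then close the proof with H\"older's inequality in $x$. First I would prove that for any $F \in C_c^1(\frac{1}{2},\frac{5}{2})$ and any $1 \le r < \infty$,
\begin{align*}
\|F\|_{L^{\infty}(\mathbb{R})} \le r^{1/r} \|F\|_{L^{r}(\mathbb{R})}^{1/r'} \|F'\|_{L^{r}(\mathbb{R})}^{1/r}.
\end{align*}
This follows by writing $|F(u)|^{r} = \int_{-\infty}^{u} \frac{d}{ds}|F(s)|^{r}\,\mathrm{d}s \le r\int_{\mathbb{R}} |F(s)|^{r-1}|F'(s)|\,\mathrm{d}s$ and applying H\"older's inequality with exponents $r/(r-1)$ and $r$ to the right-hand side. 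The endpoint $r=\infty$ is trivial since the claimed inequality reduces to $\|F\|_{L^\infty}\le \|F\|_{L^\infty}$, and $r=1$ reduces to the fundamental theorem of calculus $\|F\|_{L^\infty}\le \|F'\|_{L^1}$.

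Next, with $F(u):=G(x,u)$, I would apply this estimate for each fixed $x\in\mathbb{R}^2$, raise both sides to the $q$-th power (assuming first $q<\infty$), and integrate in $x$:
\begin{align*}
\|G\|_{L^{q}_{x}L^{\infty}_{u}}^{q} \le r^{q/r}\int_{\mathbb{R}^{2}} \|G(x,\cdot)\|_{L^{r}_{u}}^{q/r'}\|\partial_{u}G(x,\cdot)\|_{L^{r}_{u}}^{q/r}\,\mathrm{d}x.
\end{align*}
To separate the factors, I would apply H\"older's inequality in $x$ with the pair of exponents $r'$ and $r$ (which are conjugate), obtaining
\begin{align*}
\int_{\mathbb{R}^{2}} \|G(x,\cdot)\|_{L^{r}_{u}}^{q/r'}\|\partial_{u}G(x,\cdot)\|_{L^{r}_{u}}^{q/r}\,\mathrm{d}x \le \|G\|_{L^{q}_{x}L^{r}_{u}}^{q/r'}\|\partial_{u}G\|_{L^{q}_{x}L^{r}_{u}}^{q/r}.
\end{align*}
Taking $q$-th roots yields the conclusion. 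The case $q=\infty$ is handled the same way by taking the essential supremum in $x$ rather than integrating, so no new argument is needed.

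There is no real obstacle here: the only minor subtlety is book-keeping the endpoints $r\in\{1,\infty\}$ and $q=\infty$, and observing that the H\"older step in $x$ works for every $q\in[1,\infty]$ because the exponents $r',r$ there are applied to the $x$-dependent quantities $\|G(x,\cdot)\|_{L^{r}_{u}}^{q}$ and $\|\partial_{u}G(x,\cdot)\|_{L^{r}_{u}}^{q}$, which is independent of $q$. The constant $r^{1/r}$ is absorbed into the implicit constant hidden in $\lesssim$.
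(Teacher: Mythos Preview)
Your proof is correct and follows essentially the same route as the paper: both arguments establish the pointwise-in-$x$ bound $\|G(x,\cdot)\|_{L^\infty_u}\lesssim \|G(x,\cdot)\|_{L^r_u}^{1/r'}\|\partial_u G(x,\cdot)\|_{L^r_u}^{1/r}$ via the fundamental theorem of calculus applied to $|G|^r$ together with H\"older in $u$, and then close with H\"older in $x$. Your write-up is in fact slightly more explicit about the constant $r^{1/r}$ and the endpoint cases $r\in\{1,\infty\}$, $q=\infty$.
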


\begin{proof}
For \(r = \infty\), the result is trivial. For \(r \in [1, \infty)\), note that
\begin{align*}
|G(x, u)|^r \lesssim \int_\mathbb{R} |G(x, v)|^{r-1} |\partial_v G(x, v)| \, \mathrm{d}v
\end{align*}
holds for all \(u \in (\frac{1}{2}, \frac{5}{2})\) and \(x \in \mathbb{R}^2\). By H\"older's inequality,
\begin{align*}
\left\|G(x, u)\right\|_{L^\infty_u(\mathbb{R})} \lesssim \left(\left\|G(x, u)\right\|^{r-1}_{L^r_u(\mathbb{R})} \left\|\partial_u G(x, u)\right\|_{L^r_u(\mathbb{R})}\right)^{\frac{1}{r}}
\end{align*}
for all \(x \in \mathbb{R}^2\). Applying H\"older's inequality again yields the desired estimate. This completes the proof.
\end{proof}

We now proceed to prove the sufficiency part of Theorem \ref{thm1}. From Lemma \ref{lemma3.1}, it follows that $\|\widetilde{K}_{j,k}\|_{L_x^1(\mathbb{R}^2)} \lesssim 1$ uniformly in $u \in (\frac{1}{2}, \frac{5}{2})$. Combining this with Young's inequality, we obtain
\begin{align}\label{eq:3.1}
\left\|F_{j,k} f\right\|_{L_x^\infty L_u^\infty(\mathbb{R}^2 \times \mathbb{R})} \lesssim \|f\|_{L_x^\infty(\mathbb{R}^2)}.
\end{align}
Additionally, Lemma \ref{lemma3.1} implies
\begin{align}\label{eq:3.2}
\|F_{j,k} f\|_{L_x^\infty L_u^\infty(\mathbb{R}^2 \times \mathbb{R})} \lesssim 2^k \|f\|_{L_x^1(\mathbb{R}^2)}.
\end{align}
This bound can be refined by considering the $L^1$-norm in $u$. Specifically, we observe that
\begin{align*}
\int_{1}^2 \frac{2^k}{\left(1 + 2^k \left||x - y| - u\right|\right)^N} \, \mathrm{d}u \lesssim 1
\end{align*}
uniformly in $x, y \in \mathbb{R}^2$, which further implies
\begin{align}\label{eq:3.3}
\left\|F_{j,k} f\right\|_{L_x^\infty L_u^1(\mathbb{R}^2 \times \mathbb{R})} \lesssim \|f\|_{L_x^1(\mathbb{R}^2)}.
\end{align}
Similarly, by
\begin{align*}
\int_{\mathbb{R}^2} \frac{2^k}{\left(1 + 2^k \left||x - y| - u\right|\right)^N} \, \mathrm{d}x \lesssim 1
\end{align*}
uniformly in $y \in \mathbb{R}^2$ and $u \in (\frac{1}{2}, \frac{5}{2})$, we conclude that
\begin{align}\label{eq:3.4}
\left\|F_{j,k} f\right\|_{L_x^1 L_u^1(\mathbb{R}^2 \times \mathbb{R})} \lesssim \|f\|_{L_x^1(\mathbb{R}^2)}.
\end{align}

From \cite[(2.44)]{LiuYu}, for any $\epsilon \in (0, \infty)$, we have
\begin{align*}
\left\|F_{j,k} f\right\|_{L_x^5 L_u^5(\mathbb{R}^2 \times \mathbb{R})} \lesssim 2^{(\epsilon - \frac{1}{5})k} \|f\|_{L_x^{\frac{5}{2}}(\mathbb{R}^2)}.
\end{align*}
Furthermore, by Lemma \ref{lemma 2.2} or Lemma \ref{lemma3.2}, we find that
\begin{align}\label{eq:3.5}
\left\|F_{j,k} f\right\|_{L_x^5 L_u^\infty(\mathbb{R}^2 \times \mathbb{R})} \lesssim 2^{\epsilon k} \|f\|_{L_x^{\frac{5}{2}}(\mathbb{R}^2)}
\end{align}
holds for any $\epsilon \in (0, \infty)$. On the other hand, by \eqref{eq:2.6}, it is a consequence of Plancherel's theorem that
\begin{align}\label{eq:3.7}
\left\|F_{j,k} f\right\|_{L_x^2 L_u^2(\mathbb{R}^2 \times \mathbb{R})} \lesssim 2^{-\frac{1}{2}k} \|f\|_{L_x^2(\mathbb{R}^2)}.
\end{align}
Combining this with Lemma \ref{lemma 2.2} and \eqref{eq:2.6}, we obtain
\begin{equation}\label{eq:3.6}
\begin{split}
	\left\|F_{j,k} f\right\|_{L_x^2 L_u^\infty(\mathbb{R}^2 \times \mathbb{R})} &\lesssim \left\|F_{j,k} f(x, 1)\right\|_{L_x^2(\mathbb{R}^2)}  + \left(\left\|F_{j,k} f\right\|_{L_x^2 L_u^2(\mathbb{R}^2 \times \mathbb{R})} \left\|\partial_u (F_{j,k} f)\right\|_{L_x^2 L_u^2(\mathbb{R}^2 \times \mathbb{R})}\right)^{\frac{1}{2}} \\
&\lesssim \|f\|_{L_x^2(\mathbb{R}^2)}.
\end{split}
\end{equation}

From Figure \ref{Figure:1}, for all $r \in [1, \infty]$, it suffices to obtain a constant bound $C$ or a small bound $2^{\epsilon_1 k}$ at points $\{\text{O, A, B, C, D}\}$ for any $\epsilon_1 \in (0, \infty)$, and a decay bound $2^{-\epsilon_2 k}$ for $F_{j,k}$ at $(\frac{1}{4}, \frac{1}{4})$ with some $\epsilon_2 \in (0, \infty)$. More precisely, we first claim the following:

\begin{enumerate}
\item[$\bullet$] \textbf{Constant bound at point} O: For any $r \in [1, \infty]$, by H\"older's inequality and \eqref{eq:3.1}, it is easy to see that
\begin{align*}
\left\|F_{j,k} f\right\|_{L_x^\infty L_u^r(\mathbb{R}^2 \times \mathbb{R})} \lesssim \|f\|_{L_x^\infty(\mathbb{R}^2)}.
\end{align*}

\item[$\bullet$] \textbf{Constant bound at point} A: It follows from interpolation between the $ L_x^1(\mathbb{R}^2) \rightarrow L_x^1 L_u^1(\mathbb{R}^2 \times \mathbb{R})$ and $ L_x^2(\mathbb{R}^2) \rightarrow L_x^2 L_u^\infty(\mathbb{R}^2 \times \mathbb{R})$ estimates of $F_{j,k}$ established in \eqref{eq:3.4} and \eqref{eq:3.6}, respectively.

\item[$\bullet$] \textbf{Constant bound at point} B: For any $r\in[1, \infty]$, from \eqref{eq:3.6} and \eqref{eq:3.3}, reiterating this interpolation process, then the following estimate is valid:
\begin{align*}
\left\|F_{j,k} f\right\|_{L_x^{\frac{2r}{r-1}} L_u^r(\mathbb{R}^2 \times \mathbb{R})} \lesssim \|f\|_{L_x^{\frac{2r}{r+1}}(\mathbb{R}^2)}.
\end{align*}
This is the desired estimate at the point $\textrm{B}$.

\item[$\bullet$] \textbf{Small bound at point} C: For any $r\in[1, \infty]$, a further interpolation between \eqref{eq:3.5} and \eqref{eq:3.3} gives
\begin{align*}
\left\|F_{j,k} f\right\|_{L_x^{\frac{5r}{r-1}} L_u^r(\mathbb{R}^2 \times \mathbb{R})} \lesssim 2^{\frac{\epsilon}{r'}k} \|f\|_{L_x^{\frac{5r}{2r+3}}(\mathbb{R}^2)}
\end{align*}
for any $\epsilon \in (0, \infty)$.

\item[$\bullet$] \textbf{Constant bound at point} D: For any $r \in [1, \infty]$, interpolation between \eqref{eq:3.1} and \eqref{eq:3.3} yields
\begin{align*}
\left\|F_{j,k} f\right\|_{L_x^\infty L_u^r(\mathbb{R}^2 \times \mathbb{R})} \lesssim \|f\|_{L_x^r(\mathbb{R}^2)}.
\end{align*}
\end{enumerate}

Therefore, we reduce our proof to obtaining a decay bound at $(\frac{1}{4}, \frac{1}{4})$. For any $r \in [1, \infty]$, by H\"older's inequality, it is easy to see that $$\left\|F_{j,k} f\right\|_{L_x^4 L_u^{r}(\mathbb{R}^2 \times \mathbb{R})} \lesssim \left\|F_{j,k} f\right\|_{L_x^4 L_u^{\infty}(\mathbb{R}^2 \times \mathbb{R})}. $$
Then, we just need to prove
\begin{align}\label{eq:3.333}
\left\|F_{j,k} f\right\|_{L_x^4 L_u^{\infty}(\mathbb{R}^2 \times \mathbb{R})} \lesssim 2^{\left(-\frac{1}{4}+\epsilon\right)k} \|f\|_{L_x^4(\mathbb{R}^2)}
\end{align}
for any $\epsilon \in (0, \infty)$. It is easy to see that the desired decay bound at $(\frac{1}{4}, \frac{1}{4})$ can be obtained by letting $\epsilon\in (0,\frac{1}{4})$ in \eqref{eq:3.333}.

To obtain \eqref{eq:3.333}, we first establish the space-time estimates for \emph{Fourier integral operators}, defined on the space of Schwartz functions \(\mathcal{S}(\mathbb{R}^n)\), of the form
\begin{align}\label{eq:1.a}
\mathcal{F}f(z) := \int_{\mathbb{R}^n} e^{i\phi(z, \xi)} a(z, \xi) \hat{f}(\xi) \, \mathrm{d}\xi,
\end{align}
where \(a(z, \xi)\) is a symbol of order \(\sigma \in \mathbb{R}\). Here and hereafter, \(z\) denotes a vector in \(\mathbb{R}^n \times \mathbb{R}\) comprising the space-time variables \((x, u)\). Assume that \(\textrm{supp}~a(\cdot, \xi)\) is contained in a fixed compact set and \(\phi(z, \cdot)\) is homogeneous of degree 1. We say that \(\mathcal{F}\) satisfies the \emph{cinematic curvature condition} if the following hold:
\begin{enumerate}
\item[$\blacksquare$] \textbf{Non-degeneracy condition}: For all \((z, \xi) \in \textrm{supp}~a\),
\begin{align*}
\textrm{rank}~\partial^2_{z\xi}\phi(z, \xi) = n.
\end{align*}

\item[$\blacksquare$] \textbf{Cone condition}: Consider the Gauss map \(G: \textrm{supp}~a \rightarrow \mathbb{S}^n\) defined by
\begin{align*}
G(z, \xi) := \frac{G_0(z, \xi)}{|G_0(z, \xi)|}, \quad \text{where} \quad G_0(z, \xi) := \bigwedge_{j=1}^n \partial_{\xi_j} \partial_z \phi(z, \xi).
\end{align*}
The curvature condition
\begin{align*}
\textrm{rank}~\partial^2_{\xi\xi} \langle \partial_z \phi(z, \xi), G(z, \xi_0) \rangle \big|_{\xi = \xi_0} = n - 1
\end{align*}
holds for all \((z, \xi_0) \in \textrm{supp}~a\).
\end{enumerate}

We now establish the following two lemmas, which will be used repeatedly in the remaining parts of this paper.

\begin{lemma}\label{lemma5.1}
Let \(1 \leq p \leq q \leq \infty\) and \(s \in \mathbb{R}\). Let $P_k$ be the Littlewood-Paley operator defined in Section $1$, and \(\mathcal{F}\) be the Fourier integral operator defined in \eqref{eq:1.a} with a symbol of order \(\sigma\) satisfying the cinematic curvature condition, together with the special assumption that \(\textrm{rank}~\partial^2_{\xi\xi} \phi(z, \xi) = n - 1\) for \(\xi \neq 0\). Then, there exists a positive constant \(C_{(u)}\) such that
\begin{align*}
\left\|\mathcal{F} P_k f\right\|_{L_x^q(\mathbb{R}^n)} \leq C_{(u)} 2^{sk} \|f\|_{L_x^p(\mathbb{R}^n)},
\end{align*}
where \(s \geq \bar{s}_{n, \sigma}(p, q)\) and \(\bar{s}_{n, \sigma}(p, q)\) is defined as
\begin{align*}
\bar{s}_{n, \sigma}(p, q) :=
\begin{cases}
\frac{1}{p} - \frac{n}{q} + \frac{n - 1}{2} + \sigma, & \text{for } q > p'; \\
\frac{n}{p} - \frac{1}{q} - \frac{n - 1}{2} + \sigma, & \text{for } q \leq p'.
\end{cases}
\end{align*}
\end{lemma}

\begin{proof}
Indeed, this lemma can be followed from \cite[Proposition 3.2]{LLLY} by interpolation. Here, we give another proof, which can be also applied to some other estimates for \(\mathcal{F}\).

The case $q=p\in (1,\infty)$ has been proved in Seeger, Sogge and Stein \cite[Theorem 2.1]{SSS}. Then, by interpolation, it suffices to prove
\begin{align}\label{eq:3.da}
\left\|\mathcal{F}P_kf\right\|_{L_{x}^{\infty}(\mathbb{R}^n)}\leq C_{(u)} 2^{(\frac{n+1}{2}+\sigma)k}\left\|f\right\|_{L_x^1(\mathbb{R}^{n})},
\end{align}
\begin{align}\label{eq:3.ca}
\left\|\mathcal{F}P_kf\right\|_{L_{x}^{\infty}(\mathbb{R}^n)}\leq C_{(u)}2^{(\frac{n-1}{2}+\sigma)k}\left\|f\right\|_{L_x^{\infty}(\mathbb{R}^{n})},
\end{align}
and
\begin{align}\label{eq:3.aa}
\left\|\mathcal{F}P_kf\right\|_{L_{x}^{1}(\mathbb{R}^n)}\leq C_{(u)}2^{(\frac{n-1}{2}+\sigma)k}\left\|f\right\|_{L_x^{1}(\mathbb{R}^{n})}.
\end{align}

To establish \eqref{eq:3.da}, by changing variables, we express the kernel of $\mathcal{F}P_k$ as
\begin{align*}
K_{k}(z,y):=2^{nk}\int_{\mathbb{R}^n} e^{i2^k(\phi(z,\xi)-y\cdot\xi)}a(z,2^k\xi)  \bar{\psi} (|\xi|)\,\textrm{d}\xi.
\end{align*}
Without loss of generality, we may suppose that
$\textrm{rank}~\partial_{\xi'\xi'}^{2}\phi(z,\xi)=n-1$ where $\xi:=(\xi_1,\xi')$ with $\xi':=(\xi_2,\cdots,\xi_n)$ is perpendicular to $\xi_1$.
By partition of unity, we can further suppose that $\bar{\psi}=0$ if $|\frac{\xi}{|\xi|}-e_{1}|\geq \frac{1}{10^2}$ and, for all $z\in \textrm{supp}~a(\cdot,\xi)$ and $y\in \mathbb{R}^n$, there is at most one $v\in \mathbb{S}^{n-1}$ satisfying $|v-e_{1}|\leq \frac{1}{9^2}$ such that
\begin{align}\label{eq:4.3a}
\nabla_{\xi}\phi(z,v)=y,
\end{align}
where $e_{1}:=(1,0,\cdots,0)\in \mathbb{S}^{n-1}$. If such $v$ does not exist, then the phase function in $K_{k}$ has no critical point. Moreover, by homogeneity, it follows that
\begin{align*}
|K_{k}(z,y)|\leq C_{(u,N)}2^{-Nk}
\end{align*}
if $N$ is large enough, which further leads to \eqref{eq:3.da}.

Now, we suppose that there exists such $v$. Without loss of generality we may assume that $v=e_{1}$, otherwise we can choose new axes so that $v=e_{1}$. Since $\phi(z,\xi)$ is homogeneous of degree $1$ in $\xi$,
\begin{align*}
\nabla_{\xi}\phi(z,\xi)\cdot \xi=\phi(z,\xi);
\end{align*}
hence $\nabla _{\xi}(\partial_{\xi_i}\phi)(z,\xi)\cdot \xi=0$ for any $i=1,2,\cdots,n$, furthermore, $\partial^2_{\xi_i \xi_1}\phi(z,e_{1})=0$ holds for all $i=1,2,\cdots,n$. This, combined with he special assumption that $\textrm{rank}~\partial^2_{\xi'\xi'}\phi(z,\xi)=n-1$ when $\xi\neq 0$, leads to
\begin{align}\label{eq:4.4a}
\det \partial_{\xi'\xi'}^{2}\phi(z,e_{1})\neq0.
\end{align}

It is easy to see that, for any $i,j=1,2,\cdots,n$, $\partial^2_{\xi_i\xi_j}\phi(z,\xi)$ is homogeneous of degree $1$ in $\xi$ and $\partial_{\xi_i}\phi(z,\xi)$ is homogeneous of degree $0$ in $\xi$. Let $\phi_{\xi_{1}}(z,\xi'):=\phi(z,\xi)$. Then, for all $|(\xi_{1},0)|\in \textrm{supp}~\bar{\psi}$, by \eqref{eq:4.3a} and \eqref{eq:4.4a}, we can see that
\begin{align*}
\nabla_{\xi'}\phi_{\xi_{1}}(z,0)=\nabla_{\xi'}\phi(z,(\xi_{1},0))=\nabla_{\xi'}\phi(z,e_{1})=y',
\end{align*}
and
\begin{align*}
\det\partial_{\xi'\xi'}^{2}\phi_{\xi_{1}}(z,0)=\det\partial_{\xi'\xi'}^{2}\phi(z,(\xi_{1},0))=|\xi_{1}|^{-(n-1)}\det\partial_{\xi'\xi'}^{2}\phi(z,e_{1})\neq0,
\end{align*}
where $y:=(y_{1},y')$. Therefore, from Stein \cite[Chapter VIII, Proposition 6]{Stein}, there exists a constant $C$ such that the following identity is valid:
\begin{align}\label{eq:4.99a}
K_{k}(z,y)&=2^{nk}\int_{\mathbb{R}}\int_{\mathbb{R}^{n-1}} e^{i2^k(\phi(z,\xi)-y\cdot\xi)}a(z,2^k\xi)  \bar{\psi} (|\xi|)\,\textrm{d}\xi'\,\textrm{d}\xi_1\\
&=C2^{\frac{n+1}{2}k}\int_{\mathbb{R}}e^{i2^k\left(\phi(z,(\xi_1,0))-y_1\xi_1\right)}a\left(z,(2^k\xi_1,0)\right)  \bar{\psi} (|\xi_1|) \,\textrm{d}\xi_1+O\left(2^{\left(\frac{n-1}{2}+\sigma\right)k}\right),\nonumber
\end{align}
which further leads to $|K_{k}(z,y)|\lesssim 2^{\left(\frac{n+1}{2}+\sigma\right)k}$ and \eqref{eq:3.da} follows easily.

We now turn to the proof of \eqref{eq:3.ca}, which is based on some results obtained in Stein \cite[Chapter IX, Section 4]{Stein}. Let \(\mathcal{F}\) be the Fourier integral operator with a symbol of order $-\frac{n-1}{2}$, it is enough to prove
\begin{align*}
\left\|\mathcal{F}P_kf\right\|_{L_{x}^{\infty}(\mathbb{R}^n)}\leq C_{(u)}\left\|f\right\|_{L_x^{\infty}(\mathbb{R}^{n})},
\end{align*}
which can be reduced to establish
\begin{align}\label{eq:5.1a}
 \left|\int_{\mathbb{R}^n} K_{k}(z,y)\,\textrm{d}y\right|\leq C_{(u)},
\end{align}
where the kernel of $\mathcal{F}P_k$ is given by
\begin{align*}
K_{k}(z,y)=\int_{\mathbb{R}^n} e^{i(\phi(z,\xi)-y\cdot\xi)}a(z,\xi)  \bar{\psi}_k (|\xi|)\,\textrm{d}\xi.
\end{align*}

Next, a further dyadic division is needed. Roughly speaking, we will split each dyadic shell $\{ \xi \in \mathbb{R}^n: 2^{k-3}\leq |\xi| \leq 2^{k+3} \}$ into thin truncated cones of aperture roughly $2^{-\frac{k}{2}}$, and each such truncated cone is essentially an elongated rectangle whose major axis has length roughly $2^k$, while all the other sides have length roughly $2^{\frac{k}{2}}$. Then, we need roughly $2^{\frac{n-1}{2}k}$ such truncated cones to cover the shell $\{ \xi \in \mathbb{R}^n: 2^{k-3}\leq |\xi| \leq 2^{k+3} \}$ (see Stein \cite[Chapter IX, Figure 1]{Stein}). More precisely, for each $k \in \mathbb{N}$, we fix a collection $\{\xi_k^{\nu}\}_{\nu}\subset \mathbb{S}^{n-1}$ satisfy:
\begin{enumerate}
  \item[\rm(1)] $|\xi_k^{\nu}-\xi_k^{\nu'}|\geq 2^{-\frac{k}{2}}$, if $\nu\neq \nu'$;
  \item[\rm(2)] If $\xi\in \mathbb{S}^{n-1}$, then there exists a $\xi_k^{\nu}$ so that $|\xi-\xi_k^{\nu}|< 2^{-\frac{k}{2}}$.
\end{enumerate}
Let
\begin{align*}
\Gamma_k^{\nu}:=\left\{ \xi \in \mathbb{R}^n:  \left|\frac{\xi}{|\xi|}-\xi_k^{\nu}\right| \leq 2^{1-\frac{k}{2}} \right\},
\end{align*}
which is a cone with central direction $\xi_k^{\nu}$. We can then construct an associated partition of unity
\begin{align*}
\sum_{\nu} \chi_k^{\nu}(\xi)=1
\end{align*}
for all $\xi\neq 0$ and $k \in \mathbb{N}$, where $\chi_k^{\nu}$ is homogeneous of degree $0$ in $\xi$, supported in $\Gamma_k^{\nu}$ and satisfy $|\partial_{\xi}^{\alpha} \chi_k^{\nu}(\xi)|\leq C_{(\alpha)} 2^{\frac{|\alpha|}{2}k}|\xi|^{-|\alpha|}$ for all \( \alpha \in \N_0^n  \).

By the definition of $\chi_k^{\nu}$, let
\begin{align*}
K^{\nu}_{k}(z,y):=\int_{\mathbb{R}^n} e^{i(\phi(z,\xi)-y\cdot\xi)}a(z,\xi)  \bar{\psi}_k (|\xi|)\chi_k^{\nu}(\xi)\,\textrm{d}\xi.
\end{align*}
Consequently,
\begin{align}\label{eq:5.2a}
K_{k}=\sum_{\nu}K^{\nu}_{k}.
\end{align}
Furthermore, we choose axes in the $\xi$-space so that $\xi_1$ is in the direction of $\xi_k^{\nu}$ and write
\begin{align*}
K^{\nu}_{k}(z,y)=\int_{\mathbb{R}^n} e^{i(\nabla_{\xi}\phi(z,\xi_k^{\nu})-y)\cdot\xi}b_k^{\nu}(z,\xi) \,\textrm{d}\xi,
\end{align*}
where
\begin{align*}
b_k^{\nu}(z,\xi):=e^{i(\phi(z,\xi)-\nabla_{\xi}\phi(z,\xi_k^{\nu})\cdot\xi)}a(z,\xi)  \bar{\psi}_k (|\xi|)\chi_k^{\nu}(\xi).
\end{align*}

We define the following operator
\begin{align*}
L:=I-2^{2k}\partial^2_{\xi_1\xi_1}-2^k\nabla_{\xi'}.
\end{align*}
As in Stein \cite[Chapter IX, Section 4]{Stein}, for any \( N \in \N_0 \), we have
\begin{align*}
L^N\left(e^{i(\nabla_{\xi}\phi(z,\xi_k^{\nu})-y)\cdot\xi}\right)=\left[1+2^{2k}\left|\left(\nabla_{\xi}\phi(z,\xi_k^{\nu})-y\right)_1\right|^2 +2^k \left|\left(\nabla_{\xi}\phi(z,\xi_k^{\nu})-y\right)'\right|^2 \right]^N e^{i(\nabla_{\xi}\phi(z,\xi_k^{\nu})-y)\cdot\xi},
\end{align*}
where $(\cdot)_1$ denotes the component in the direction $\xi_k^{\nu}$ and $(\cdot)'$ denotes the orthogonal component. Moreover, by the fact that \(\mathcal{F}\) has a symbol of order $-\frac{n-1}{2}$, it follows that
\begin{align*}
L^N\left(b_k^{\nu}(z,\xi)\right)\leq C_{(u,N)} 2^{-\frac{n-1}{2}k}.
\end{align*}
Notice that the support of $b_k^{\nu}(z,\cdot)$ has volume roughly $2^{(1+\frac{n-1}{2})k}$, pass the operator $L$ onto $K^{\nu}_{k}$, we deduce that
\begin{align}\label{eq:5.3a}
\left|K^{\nu}_{k}(z,y)\right|\leq C_{(u,N)}2^k \left[1+2^{k}\left|\left(\nabla_{\xi}\phi(z,\xi_k^{\nu})-y\right)_1\right| +2^{\frac{k}{2}} \left|\left(\nabla_{\xi}\phi(z,\xi_k^{\nu})-y\right)'\right| \right]^{-2N}.
\end{align}
If we take $N$ large enough, it is easy to see that
\begin{align*}
 \left|\int_{\mathbb{R}^n} K^{\nu}_{k}(z,y)\,\textrm{d}y\right|\leq C_{(u)}2^{-\frac{n-1}{2}k}.
\end{align*}
This, combined with \eqref{eq:5.2a} and the fact that there are essentially $2^{\frac{n-1}{2}k}$ terms involved, leads to \eqref{eq:5.1a}, as desired.

The proof of \eqref{eq:3.aa} is very similar to that of \eqref{eq:3.ca}. Indeed, the change of variables $x\rightarrow \nabla_{\xi}\phi(z,\xi_k^{\nu})$ in \eqref{eq:5.3a}, whose Jacobian is bounded from below, leads to
\begin{align*}
 \left|\int_{\mathbb{R}^n} K^{\nu}_{k}(z,y)\,\textrm{d}x\right|\leq C_{(u)}2^{-\frac{n-1}{2}k}.
\end{align*}
Then,
\begin{align*}
 \left|\int_{\mathbb{R}^n} K_{k}(z,y)\,\textrm{d}x\right|\leq C_{(u)},
\end{align*}
which implies \eqref{eq:3.aa}. The proof is thus complete.
\end{proof}

\begin{lemma}\label{lemma3.3}
Let \(n = 2\), \(1 \leq p \leq q \leq \infty\), and \(s \in \mathbb{R}\). Let $P_k$ be the Littlewood-Paley operator defined in Section $1$, and \(\mathcal{F}\) be a Fourier integral operator as in \eqref{eq:1.a} with a symbol of order \(\sigma\) satisfying the cinematic curvature condition, and assume \(\textrm{rank}~\partial^2_{\xi\xi} \phi(z, \xi) = 1\) for \(\xi \neq 0\). Then, for \(P_k\) defined as in Section \ref{section:1} with \(k \in \mathbb{N}\), there exists a constant \(C > 0\) such that
\begin{align*}
\left\|\mathcal{F} P_k f\right\|_{L_x^q L^q_u(\mathbb{R}^2 \times [1,2])} \leq C 2^{sk} \|f\|_{L_x^p(\mathbb{R}^2)},
\end{align*}
where \(s \geq s_{\sigma, \epsilon}(p, q, q)\) and, for any \(\epsilon > 0\),
\begin{align*}
s_{\sigma, \epsilon}(p, q, q) :=
\begin{cases}
\frac{1}{p} - \frac{3}{q} + \frac{1}{2} + \sigma + \epsilon, & \text{for } q \geq 3p'; \\
\frac{3}{2p} - \frac{3}{2q} + \sigma + \epsilon, & \text{for } p' < q < 3p'; \\
\frac{2}{p} - \frac{1}{q} - \frac{1}{2} + \sigma, & \text{for } q \leq p'.
\end{cases}
\end{align*}
\end{lemma}

\begin{proof}
For any \(\epsilon > 0\), by interpolation, the key estimates we shall make are:
\begin{enumerate}
  \item[\rm(a)] $\left\|\mathcal{F} P_k f\right\|_{L_x^{\infty} L^{\infty}_u(\mathbb{R}^2 \times [1,2])} \lesssim 2^{\left(\frac{3}{2} + \sigma + \epsilon\right)k} \|f\|_{L_x^1(\mathbb{R}^2)}$;
  \item[\rm(b)] $\left\|\mathcal{F} P_k f\right\|_{L_x^{\infty} L^{\infty}_u(\mathbb{R}^2 \times [1,2])} \lesssim 2^{\left(\frac{1}{2} + \sigma + \epsilon\right)k} \|f\|_{L_x^{\infty}(\mathbb{R}^2)}$;
  \item[\rm(c)] $ \left\|\mathcal{F} P_k f\right\|_{L_x^1 L^1_u(\mathbb{R}^2 \times [1,2])} \lesssim 2^{ \left(\frac{1}{2} + \sigma \right)k} \|f\|_{L_x^1(\mathbb{R}^2)}$;
  \item[\rm(d)] $\left\|\mathcal{F} P_k f\right\|_{L_x^2 L^2_u(\mathbb{R}^2 \times [1,2])} \lesssim 2^{\sigma k} \|f\|_{L_x^2(\mathbb{R}^2)}$;
  \item[\rm(e)] $\left\|\mathcal{F} P_k f\right\|_{L_x^4 L^4_u(\mathbb{R}^2 \times [1,2])} \lesssim 2^{(\sigma+\epsilon)k} \|f\|_{L_x^4(\mathbb{R}^2)}$.
\end{enumerate}

The proofs of estimates $(\textrm{a})-(\textrm{d})$ follow directly from Lemma \ref{lemma5.1}, since $C_{(u)}\lesssim 1$ for any $u\in[1,2]$. On the other hand, we want to show that Lee \cite[Corollary 1.5]{Lee06} also implies the estimates $(\textrm{a})-(\textrm{b})$, where the following additional condition is needed:
\begin{center}
all nonzero eigenvalues of $\partial^2_{\xi\xi} \langle \partial_z \phi(z, \xi), G(z, \xi_0) \rangle \big|_{\xi = \xi_0} $ have the same sign.
\end{center}
However, this additional condition is clearly satisfied, since the cone condition $$\textrm{rank}~\partial^2_{\xi\xi} \langle \partial_z \phi(z, \xi), G(z, \xi_0) \rangle \big|_{\xi = \xi_0} =  1$$ holds for all \((z, \xi_0) \in \textrm{supp}~a\) when $n=2$, which further implies that there is only one nonzero eigenvalue of $\partial^2_{\xi\xi} \langle \partial_z \phi(z, \xi), G(z, \xi_0) \rangle \big|_{\xi = \xi_0} $, and so has the same sign.

The estimate $(\textrm{c})$ can also be proved in the similar way as \eqref{eq:3.da}. Indeed, by the fact that $\phi(z,\xi)$ is homogeneous of degree $1$ in $\xi$, we can write \eqref{eq:4.99a} as
\begin{align*}
K_{k}(z,y)=C2^{\frac{3}{2}k}\int_{\mathbb{R}}e^{i2^k\left(\phi(z,e_{1})-y_1\right)\xi_1}a\left(z,(2^k\xi_1,0)\right)  \bar{\psi} (|\xi_1|) \,\textrm{d}\xi_1+O\left(2^{\left(\frac{1}{2}+\sigma\right)k}\right),
\end{align*}
when $n=2$. Then, interpolation by parts shows that
\begin{align*}
|K_{k}(z,y)|\lesssim 2^{\left(\frac{3}{2}+\sigma\right)k}(1+2^{k}|\phi(z,e_{1})-y_{1}|)^{-N}+2^{\left(\frac{1}{2}+\sigma\right)k},
\end{align*}
where $N\in\mathbb{N}$ is large enough. Furthermore, from Mockenhaupt, Seeger and Sogge \cite[P. 70]{MSS}, by the above non-degeneracy condition, we can choose a locally coordinates $z=(x,u)\in \mathbb{R}^2\times \mathbb{R}$ such that
\begin{align*}
 \textrm{rank}~\partial^2_{x\xi}\phi(z,\xi)=2,
\end{align*}
and, if $\xi\neq 0$,
\begin{align*}
\partial_{u}\phi(z,\xi)\neq 0.
\end{align*}
It follows that $|\partial_{u}\phi(z,e_{1})|\gtrsim 1$ for all $z\in \textrm{supp}~a(\cdot,\xi)$. Therefore,
via a change of variables $u\rightarrow \phi(z,e_{1})$, for any $z\in \textrm{supp}~a(\cdot,\xi)$, we assert that
\begin{align*}
\int_{\mathbb{R}}|K_{k}(z,y)|\,\textrm{d}u&\lesssim 2^{\left(\frac{3}{2}+\sigma\right)k}\int_{\mathbb{R}}(1+2^{k}|\phi(z,e_{1})-y_{1}|)^{-N}\,\textrm{d}u+2^{\left(\frac{1}{2}+\sigma\right)k}\\
&\lesssim 2^{\left(\frac{3}{2}+\sigma\right)k}\int_{\mathbb{R}}(1+2^{k}|u-y_{1}|)^{-N}\,\textrm{d}u+2^{\left(\frac{1}{2}+\sigma\right)k}\lesssim2^{\left(\frac{1}{2}+\sigma\right)k}.
\end{align*}
This, combined with the fact that \(\textrm{supp}~a(\cdot, \xi)\) is contained in a fixed compact set, leads to the estimate $(\textrm{c})$.

The estimate $(\textrm{d})$ is a basic result about the Fourier integral operators $\mathcal{F}$, which can be proved by considering the pseudo-differential operator $\mathcal{F}^{\ast}\mathcal{F}$, we refer the reader to $\textrm{H}\ddot{\textrm{o}}\textrm{rmander}$ \cite{Hor}, Stein \cite[Chapter IX, Section 3]{Stein} or Sogge \cite[Section 6.2]{So} for more detailed discussions. The key estimate $(\textrm{e})$ follows from Gao, Liu, Miao and Xi \cite[Theorem 1.4]{GLMX}, which is a generalization of the local smoothing conjecture in $2+1$ dimensions obtained in Guth, Wang and Zhang \cite[Theorem 1.2]{GWZ}. This finishes the proof.
\end{proof}

\begin{remark}
From \cite{LLLY}, we can see that the result in Lemma \ref{lemma3.3} is nearly sharp. Moreover, if we restrict our attention to the Littlewood-Paley operator $P_k$, Lemma \ref{lemma3.3} extends the \(L_x^2(\mathbb{R}^2) \rightarrow L_x^q L^q_u(\mathbb{R}^2 \times \mathbb{R})\) estimate obtained in Mockenhaupt, Seeger and Sogge \cite[Theorem 3.1]{MSS} or the \(L_x^p(\mathbb{R}^2) \rightarrow L_x^q L^q_u(\mathbb{R}^2 \times \mathbb{R})\) estimate with $q \geq 3p'$ and $q\geq \frac{5}{3}p$ proved in Lee \cite[Corollary 1.5]{Lee06} to an almost sharp \(L_x^p(\mathbb{R}^2) \rightarrow L_x^q L^q_u(\mathbb{R}^2 \times \mathbb{R})\) estimate. Specifically, we have
\begin{align*}
\left\|\mathcal{F} P_k f\right\|_{L_x^q L^q_u(\mathbb{R}^2 \times \mathbb{R})} \lesssim \|f\|_{L_x^p(\mathbb{R}^2)}
\end{align*}
with
\begin{align*}
\begin{cases}
\sigma < -\frac{1}{p} + \frac{3}{q} - \frac{1}{2}, & \text{for } q \geq 3p'; \\
\sigma < -\frac{3}{2p} + \frac{3}{2q}, & \text{for } p' < q < 3p'; \\
\sigma \leq -\frac{2}{p} + \frac{1}{q} + \frac{1}{2}, & \text{for } q \leq p'.
\end{cases}
\end{align*}
\end{remark}

\begin{remark}
In \cite{LLLY}, for \(F_{j,k}\) defined in \eqref{eq:2.10}, we established
\begin{align}\label{eq:3.09}
\left\|F_{j,k} f\right\|_{L_x^6 L^6_u(\mathbb{R}^2 \times \mathbb{R})} \lesssim 2^{\epsilon k} \|f\|_{L_x^2(\mathbb{R}^2)}
\end{align}
for any \(\epsilon \in (0, \infty)\). This estimate can be derived by interpolating between \eqref{eq:3.2} and the \(F_{j,k}: L_x^4(\mathbb{R}^2) \rightarrow L_x^4 L^4_u(\mathbb{R}^2 \times \mathbb{R})\) estimate, the latter follows from Gao, Liu, Miao and Xi \cite[Theorem 1.4]{GLMX}. Indeed, \eqref{eq:3.09} can be improved to
\begin{align}\label{eq:3.010}
\left\|F_{j,k} f\right\|_{L_x^6 L^6_u(\mathbb{R}^2 \times \mathbb{R})} \lesssim \|f\|_{L_x^2(\mathbb{R}^2)},
\end{align}
which follows from Seeger, Sogge and Stein \cite[Theorem 3.1]{SSS} by the fact that \(F_{j,k}\) is a Fourier integral operator of order \(-\frac{1}{2}\) satisfying the cinematic curvature condition. We remark that the non-degeneracy assumptions (2.2)--(2.3) and the cone condition (2.6) in Mockenhaupt, Seeger and Sogge \cite[Theorem 3.1]{MSS} are equivalent to the cinematic curvature condition.

On the other hand, we may see from \cite{LLLY} that
the estimate \eqref{eq:3.010} is an endpoint estimate, which implies the sharpness of the $L^2_{\textrm{comp}}(Y)\rightarrow L^q_{\textrm{loc}}(Z)$ local smoothing theorem \cite[Theorem 3.1]{MSS}, and further implies the sharpness of the square function estimate \cite[Theorem 3.2]{MSS} by using Sobolev's embedding theorem in the time variable $u$, at least in two-dimensional situation. Here, we also note that the results in \cite[Theorems 3.1 and 3.2]{MSS} actually are sharp for any $n$-dimensional setting by using the counterexample in Sogge \cite[Section 6.2, P. 185]{So}.
\end{remark}

Now, we turn to the proof of \eqref{eq:3.333}. Notice that $F_{j,k}$ is a Fourier integral operator of order \( -\frac{1}{2} \) and satisfies \eqref{eq:2.100}, then the estimate $(\textrm{e})$ in Lemma \ref{lemma3.3} implies
\begin{align}\label{eq:30.001}
\left\|F_{j,k} f\right\|_{L_x^4 L^4_u(\mathbb{R}^2 \times [1,2])} \lesssim 2^{(-\frac{1}{2}+\epsilon)k} \|f\|_{L_x^4(\mathbb{R}^2)}
\end{align}
for any $\epsilon \in (0, \infty)$. Similarly, we can obtain that
\begin{align}\label{eq:30.002}
\left\|\partial_u (F_{j,k} f)\right\|_{L_x^4 L^4_u(\mathbb{R}^2 \times [1,2])} \lesssim 2^{(\frac{1}{2}+\epsilon)k} \|f\|_{L_x^4(\mathbb{R}^2)}.
\end{align}
Therefore, as an immediate consequence of \eqref{eq:30.001} and \eqref{eq:30.002}, by Lemma \ref{lemma3.2}, we deduce that
\begin{align*}
\left\|F_{j,k} f\right\|_{L_x^4 L^{\infty}_u(\mathbb{R}^2 \times [1,2])} \lesssim \left\|F_{j,k} f\right\|^{\frac{3}{4}}_{L_x^4 L^4_u(\mathbb{R}^2 \times [1,2])}\left\|\partial_u (F_{j,k} f)\right\|^{\frac{1}{4}}_{L_x^4 L^4_u(\mathbb{R}^2 \times [1,2])}\lesssim 2^{(-\frac{1}{4}+\epsilon)k} \|f\|_{L_x^4(\mathbb{R}^2)}
\end{align*}
for any $\epsilon \in (0, \infty)$, which is \eqref{eq:3.333} as desired.

Putting things together, we complete the proof of the sufficiency of the region $(\frac{1}{p}, \frac{1}{q})$ in Theorem \ref{thm1} for all $r \in [1, \infty]$.

\subsection{The necessity of the region of $(\frac{1}{p}, \frac{1}{q})$  for all $r\in[1,\infty]$}\label{subsect:3.2}

In this subsection, we establish the necessary conditions for Theorem \ref{thm1}. The condition \(0 \leq \frac{1}{q}\) is trivial, and the condition \(\frac{1}{q} \leq \frac{1}{p}\) is a standard necessity following from a theorem of H\"ormander \cite{Ho}.

We now demonstrate that, if
\begin{align}\label{eq:3.8}
\left\|Tf\right\|_{L_x^q L^r_u(\mathbb{R}^2 \times [1,2])} \lesssim \|f\|_{L_x^p(\mathbb{R}^2)}
\end{align}
holds, then \((\frac{1}{p}, \frac{1}{q})\) satisfies
\begin{align*}
1 + (1 + \omega)\left(\frac{1}{q} - \frac{1}{p}\right) \geq 0.
\end{align*}

The proof is similar to \cite{LLLY}. Specifically, it suffices to show
\begin{align}\label{eq:3.9}
t_i \left(t_i \left|\gamma(t_i)\right|\right)^{\frac{1}{q} - \frac{1}{p}} \lesssim 1,
\end{align}
where the implicit constant is independent of \(i\), and \(\{t_i\}_{i=1}^{\infty}\)  strictly decreasing and converging to \(0\). Indeed, let \(S_{t_i} := [-t_i, t_i] \times [-|\gamma(t_i)|, |\gamma(t_i)|]\). As shown in \cite{LLLY} that
\begin{align*}
\delta t_i \chi_{\frac{S_{t_i}}{2}}(x) \leq T \chi_{S_{t_i}}(x, u)
\end{align*}
holds for some \(\delta\) sufficiently small. Combining this with H\"older's inequality and \eqref{eq:3.8}, we obtain
\begin{align*}
\int_1^2 \left|\frac{S_{t_i}}{2}\right| \, \mathrm{d}u \leq \frac{1}{\delta t_i} \int_{\frac{S_{t_i}}{2}} \left\|T \chi_{S_{t_i}}(x, u)\right\|_{L^1_u([1,2])} \, \mathrm{d}x
\lesssim \frac{|S_{t_i}|^{\frac{1}{q'}}}{t_i} \left\|T \chi_{S_{t_i}}\right\|_{L_x^q L^1_u(\mathbb{R}^2 \times [1,2])}
\lesssim \frac{|S_{t_i}|^{\frac{1}{q'}}}{t_i} \left\|\chi_{S_{t_i}}\right\|_{L_x^p(\mathbb{R}^2)}.
\end{align*}
Since \(|S_{t_i}| \approx t_i |\gamma(t_i)|\) and \(\|\chi_{S_{t_i}}\|_{L_x^p(\mathbb{R}^2)} \approx (t_i |\gamma(t_i)|)^{\frac{1}{p}}\), the desired inequality \eqref{eq:3.9} follows.

To obtain the other necessary conditions for Theorem \ref{thm1}, we also require the following proposition, which is based on Lemmas \ref{lemma3.2}, \ref{lemma5.1} and \ref{lemma3.3}.

\begin{proposition}\label{proposition3.4}
Let \(n = 2\), \(1 \leq p \leq q \leq \infty\), and \(\mathcal{F}\) be as in Lemma \ref{lemma3.3}. Then, there exists a constant \(C > 0\) such that
\begin{align*}
\left\|\mathcal{F} P_k f\right\|_{L_x^q L^\infty_u(\mathbb{R}^2 \times \mathbb{R})} \leq C 2^{sk} \|f\|_{L_x^p(\mathbb{R}^2)},
\end{align*}
where \(s \geq s_{\sigma, \epsilon}(p, q, \infty)\) and, for any \(\epsilon > 0\),
\begin{align*}
s_{\sigma, \epsilon}(p, q, \infty) :=
\begin{cases}
\frac{1}{p} - \frac{2}{q} + \frac{1}{2} + \sigma + \epsilon, & \text{for } q \geq 3p'; \\
\frac{3}{2p} - \frac{1}{2q} + \sigma + \epsilon, & \text{for } p' < q < 3p'; \\
\frac{2}{p} - \frac{1}{2} + \sigma, & \text{for } q \leq p'.
\end{cases}
\end{align*}
\end{proposition}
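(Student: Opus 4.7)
The plan is to treat the $L_u^\infty$ norm in Proposition \ref{proposition3.4} by the Sobolev-type inequality of Lemma \ref{lemma3.2} combined with the fixed-$q$ space-time estimate Lemma \ref{lemma3.3} applied both to $\mathcal{F}P_k$ itself and to its $u$-derivative. Since the amplitude $a(z,\xi)$ is compactly supported in $z=(x,u)$, $\mathcal{F}P_k f(x,\cdot)\in C_c^1$ for each fixed $x$, so Lemma \ref{lemma3.2} with $r=q$ (assuming first $1\leq q<\infty$) yields
\begin{align*}
\|\mathcal{F}P_k f\|_{L_x^q L_u^\infty(\R^2\times\R)} \lesssim \|\mathcal{F}P_k f\|_{L_x^q L_u^q(\R^2\times\R)}^{1/q'}\,\|\partial_u \mathcal{F}P_k f\|_{L_x^q L_u^q(\R^2\times\R)}^{1/q}.
\end{align*}
Lemma \ref{lemma3.3} directly bounds the first factor by $2^{s_{\sigma,\epsilon}(p,q,q)k}\|f\|_{L_x^p(\R^2)}$.

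To bound the second factor, I would differentiate under the integral:
\begin{align*}
\partial_u \mathcal{F}f(z) = \int_{\R^n} e^{i\phi(z,\xi)} \bigl[\,i\,\partial_u\phi(z,\xi)\,a(z,\xi) + \partial_u a(z,\xi)\bigr]\,\hat{f}(\xi)\,d\xi.
\end{align*}
Because $\phi$ is homogeneous of degree one in $\xi$, $\partial_u\phi$ is also degree one in $\xi$, so the bracketed amplitude splits into a symbol of order $\sigma+1$ plus one of order $\sigma$. The phase is unchanged and the new amplitudes are supported in $\mathrm{supp}\,a$, so the cinematic curvature condition and the rank-one condition $\mathrm{rank}\,\partial^2_{\xi\xi}\phi=1$ transfer automatically. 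Hence Lemma \ref{lemma3.3} applies to $\partial_u\mathcal{F}$ with order $\sigma+1$, yielding
\begin{align*}
\|\partial_u\mathcal{F}P_k f\|_{L_x^q L_u^q(\R^2\times\R)} \lesssim 2^{s_{\sigma+1,\epsilon}(p,q,q)k}\|f\|_{L_x^p(\R^2)} = 2^{(s_{\sigma,\epsilon}(p,q,q)+1)k}\|f\|_{L_x^p(\R^2)},
\end{align*}
where I used that the formulas in Lemma \ref{lemma3.3} satisfy $s_{\sigma+1,\epsilon}(p,q,q)=s_{\sigma,\epsilon}(p,q,q)+1$ in each of the three regimes.

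Combining the two factors gives the total exponent
\begin{align*}
\tfrac{1}{q'}\,s_{\sigma,\epsilon}(p,q,q)+\tfrac{1}{q}\bigl(s_{\sigma,\epsilon}(p,q,q)+1\bigr) = s_{\sigma,\epsilon}(p,q,q)+\tfrac{1}{q}.
\end{align*}
A direct case-by-case check in the three regimes $q\geq 3p'$, $p'<q<3p'$, and $q\leq p'$ confirms that in every regime the formulas defining $s_{\sigma,\epsilon}(p,q,\infty)$ in Proposition \ref{proposition3.4} and $s_{\sigma,\epsilon}(p,q,q)$ in Lemma \ref{lemma3.3} differ by exactly $1/q$, so $s_{\sigma,\epsilon}(p,q,q)+1/q = s_{\sigma,\epsilon}(p,q,\infty)$. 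The endpoint $q=\infty$ is handled by invoking Lemma \ref{lemma3.3} directly with $q=\infty$, whose output $\tfrac{1}{p}+\tfrac{1}{2}+\sigma+\epsilon$ matches the target value $s_{\sigma,\epsilon}(p,\infty,\infty)$ in Proposition \ref{proposition3.4}. The only mildly delicate point is the identification of $\partial_u\mathcal{F}$ with a FIO covered by Lemma \ref{lemma3.3}; but since the cinematic curvature and rank conditions depend only on the (unchanged) phase and on the support of the amplitude, no new geometric verification is needed, and the lower-order summand $\partial_u a$ is absorbed by the leading term.
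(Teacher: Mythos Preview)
Your proposal is correct and follows essentially the same approach as the paper: apply Lemma \ref{lemma3.2} with $r=q$, use Lemma \ref{lemma3.3} for $\mathcal{F}P_kf$, observe that $\partial_u\mathcal{F}$ is an FIO with the same phase but amplitude of order $\sigma+1$ (so Lemma \ref{lemma3.3} gives an extra factor $2^k$), and conclude $s_{\sigma,\epsilon}(p,q,\infty)=s_{\sigma,\epsilon}(p,q,q)+\tfrac{1}{q}$. Your write-up is in fact more detailed than the paper's, which compresses these same steps into a couple of sentences.
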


\begin{proof}
Notice that $\phi(z,\cdot)$ is a homogeneous function of degree $1$, which leads to that $\partial_u\phi(z,\cdot)$ is also a homogeneous function of degree $1$ and $|\partial_u\phi(z,\cdot)|\lesssim |\xi|$. Then, the proof of the $L_x^p(\mathbb{R}^2)\rightarrow L_x^qL^q_u(\mathbb{R}^2\times \mathbb{R})$ estimate for $\partial_u(\mathcal{F}P_kf)$ is very similar as that for $\mathcal{F}P_kf$, but need to multiply a extra factor $2^k$. Therefore, by Lemma \ref{lemma3.2} with $r=q$ and Lemma \ref{lemma3.3}, we can say
\begin{align*}
s_{\sigma,\epsilon}(p,q,\infty)= s_{\sigma,\epsilon}(p,q,q)+\frac{1}{q}
\end{align*}
 and complete the proof of this proposition.
\end{proof}

Let \(Q_k f := (\varrho_k(\xi) \hat{f}(\xi))^\vee\) with \(\varrho_k(\xi) := \varrho(2^{-k} \xi)\) and \(k \in \mathbb{N}\), where \(\varrho \in \mathcal{S}(\mathbb{R}^2)\) and
\begin{align*}
\textrm{supp}~\varrho \subset \left\{\xi \in \mathbb{R}^2: \frac{1}{2} \leq |\xi| \leq 1 \text{ and } \frac{|\xi_1|}{|\xi_2|} \in \textrm{supp}~\rho\right\}.
\end{align*}
Assume that \eqref{eq:3.8} holds for some \(1 \leq p \leq q \leq \infty\) and \(1 \leq r \leq \infty\), and apply it to \(Q_k f\). From the preliminaries in Section 1, we conclude that
\begin{align*}
\left\|F_{j,k} P_k Q_k f\right\|_{L_x^q L^r_u(\mathbb{R}^2 \times \mathbb{R})} \lesssim \|f\|_{L_x^p(\mathbb{R}^2)}.
\end{align*}
Here, we used the fact that \(F_{j,k} P_k Q_k f = F_{j,k} Q_k f\). Furthermore, by Lemma \ref{lemma3.2}, we have
\begin{align}\label{eq:3.10}
\left\|F_{j,k} P_k Q_k f\right\|_{L_x^q L^\infty_u(\mathbb{R}^2 \times \mathbb{R})} \lesssim 2^{\frac{1}{r}k} \|f\|_{L_x^p(\mathbb{R}^2)}.
\end{align}

On the other hand, since \(F_{j,k}\) is a Fourier integral operator of order \(-\frac{1}{2}\) satisfying the special assumption \eqref{eq:2.100}, Proposition \ref{proposition3.4} implies
\begin{align}\label{eq:3.11}
\left\|F_{j,k} P_k Q_k f\right\|_{L_x^q L^\infty_u(\mathbb{R}^2 \times \mathbb{R})} \lesssim 2^{sk} \|f\|_{L_x^p(\mathbb{R}^2)}
\end{align}
with \(s \geq s_{-\frac{1}{2}, \epsilon}(p, q, \infty)\), where

\begin{align*}
s_{-\frac{1}{2}, \epsilon}(p, q, \infty) =
\begin{cases}
\frac{1}{p} - \frac{2}{q} + \epsilon, & \text{for } q \geq 3p'; \\
\frac{3}{2p} - \frac{1}{2q} - \frac{1}{2} + \epsilon, & \text{for } p' < q < 3p'; \\
\frac{2}{p} - 1, & \text{for } q \leq p'.
\end{cases}
\end{align*}
Here \(\epsilon \in (0, \infty)\) is an arbitrary constant. Moreover, from \cite{LiuYu}, the estimates \eqref{eq:3.1}, \eqref{eq:3.5}, and \eqref{eq:3.6} represent the endpoint estimates for \(F_{j,k}\) when taking the \(L^\infty\)-norm in \(u\). i.e., the points \((0, 0)\), \((\frac{1}{2}, \frac{1}{2})\), and \((\frac{2}{5}, \frac{1}{5})\) are the endpoints. Consequently, \(s_{-\frac{1}{2}, 0}(p, q, \infty)\) is the infimum such that \eqref{eq:3.11} holds. Combining this with \eqref{eq:3.10}, we obtain
$\frac{1}{r} \geq \max \{ \frac{1}{p} - \frac{2}{q}, \frac{3}{2p} - \frac{1}{2q} - \frac{1}{2}, \frac{2}{p} - 1 \}$.
This further implies
\begin{align*}
\max \left\{ \frac{1}{2p} - \frac{1}{2r}, \frac{3}{p} - \frac{r + 2}{r} \right\} \leq \frac{1}{q}
~~
\textrm{and}
~~
\frac{1}{p} \leq \frac{r + 1}{2r},
\end{align*}
as desired.

Therefore, we conclude the proof of the necessity of $(\frac{1}{p}, \frac{1}{q})$ in Theorem \ref{thm1} for all $r\in[1,\infty]$.

\section{Proof of Theorem \ref{thm3}}

In this section, we investigate the time-space estimate, i.e., the \(L_x^p(\mathbb{R}^2) \rightarrow L^r_u L_x^q(\mathbb{R}^2 \times [1,2])\) estimate, for the operator \(T\) defined in \eqref{eq:2.1}. We address the sufficient and necessary parts separately, although the result is not sharp due to certain technical limitations.

\subsection{Sufficient Part}\label{subsect:4.1}

The proof of the time-space estimate for \(T\) differs significantly from the space-time estimate established in Theorem \ref{thm1}. From the preliminaries in Section \ref{preliminaries}, it suffices to prove that
\begin{align*}
\sum_{j \in \mathbb{Z}_0^{-}} 2^j |2^j \gamma(2^j)|^{\frac{1}{q} - \frac{1}{p}} \left\| \widetilde{T}_j \right\|_{L_x^p(\mathbb{R}^2) \rightarrow L^r_u L_x^q(\mathbb{R}^2 \times [1,2])} \lesssim 1.
\end{align*}
By Lemma \ref{lemma 2.3}, it remains to prove the time-space estimate for \(\widetilde{T}_j\) uniformly in \(j \in \mathbb{Z}_0^{-}\). As in Section \ref{preliminaries}, we reduce this estimate to \(\widetilde{T}^1_{j,k}\) defined in \eqref{eq:2.5}. Furthermore, from \cite{LiuYu}, we know that the time-space estimate for \(\widetilde{T}^{1,a}_{j,k}\) is bounded by \(2^{-Nk}\) for \(N\) sufficiently large. We can then reduce to estimate the operator \(\widetilde{T}^{1,b}_{j,k}\), rather than \(F_{j,k}\), in the case \(1 \leq r < p \leq q \leq \infty\).

Indeed, for \(F_{j,k}\) and \(E_{j,k}\) defined in \eqref{eq:2.8} and \eqref{eq:2.08}, respectively, we also decompose \(\mathbb{R}^2\) into \(\bigcup_{i \in \mathbb{Z}^2} B(x_i, \varpi)\) such that \(|x_i - x_{i'}| \approx |i - i'| \varpi\) for any \(i \neq i'\). Unlike \eqref{eq:2.9}, we bound \(\|\widetilde{T}^{1,b}_{j,k} f\|_{L^r_u L_x^q(\mathbb{R}^2 \times [1,2])}\) by
\begin{align}\label{eq:5.1}
\left\|\left[\sum_{i \in \mathbb{Z}^2} \left\|\left(E_{j,k} - F_{j,k}\right)\left(f(\cdot - x_i)\right)\right\|^q_{L_x^q(\mathbb{R}^2)}\right]^{\frac{1}{q}}\right\|_{L_u^r(\mathbb{R})} + \left\|\left[\sum_{i \in \mathbb{Z}^2} \left\|F_{j,k}\left(f(\cdot - x_i)\right)\right\|^q_{L_x^q(\mathbb{R}^2)}\right]^{\frac{1}{q}}\right\|_{L_u^r(\mathbb{R})}.
\end{align}
The estimate for the first term follows from an argument similar to that for \(\widetilde{T}^{1,a}_{j,k}\). The key difference lies in the second term. To be more precise, when \(1 \leq p \leq q \leq r \leq \infty\), Minkowski's inequality implies that the second term can be bounded by
\begin{align*}
\left[\sum_{i \in \mathbb{Z}^2} \left\|F_{j,k}\left(f(\cdot - x_i)\right)\right\|^q_{L_u^r L_x^q(\mathbb{R}^2 \times \mathbb{R})}\right]^{\frac{1}{q}},
\end{align*}
which can be reduced to the corresponding time-space estimate \(F_{j,k}: L_x^p(\mathbb{R}^2) \rightarrow L^r_u L_x^q(\mathbb{R}^2 \times \mathbb{R})\) with a decay in \(k\). When \(1 \leq p \leq r \leq q \leq \infty\), the second term can be bounded as
\begin{align*}
\left\|\left[\sum_{i \in \mathbb{Z}^2} \left\|F_{j,k}\left(f(\cdot - x_i)\right)\right\|^r_{L_x^q(\mathbb{R}^2)}\right]^{\frac{1}{r}}\right\|_{L_u^r(\mathbb{R})} \leq \left[\sum_{i \in \mathbb{Z}^2} \left\|F_{j,k}\left(f(\cdot - x_i)\right)\right\|^p_{L_u^r L_x^q(\mathbb{R}^2 \times \mathbb{R})}\right]^{\frac{1}{p}},
\end{align*}
which also reduces to the corresponding decay time-space estimate for \(F_{j,k}\). Thus, the only case where we do not reduce the estimate to a corresponding decay time-space estimate for \(F_{j,k}\) is when \(1 \leq r < p \leq q \leq \infty\).

We remark that the most challenging aspect of proving the time-space estimates for \(T\) may not lie in the case \(1 \leq r < p \leq q \leq \infty\), since the \(L_x^\infty(\mathbb{R}^2) \rightarrow L^\infty_u L_x^\infty(\mathbb{R}^2 \times [1,2])\) estimate for \(T\) holds trivially, and we can handle the region \(1 \leq r < p \leq q \leq \infty\) via interpolation. The most significant difference is that we can only obtain a common estimate
\begin{align*}
\|F_{j,k} f\|_{L^r_u L_x^\infty(\mathbb{R}^2 \times \mathbb{R})} \lesssim 2^k \|f\|_{L_x^1(\mathbb{R}^2)}
\end{align*}
for all \(r \in [1, \infty]\), unlike the estimate in \eqref{eq:3.3}.

Based on the above discussion, we divide the proof of Theorem \ref{thm3} into three parts.

\subsubsection{The Case \(1 \leq p \leq q \leq r \leq \infty\)}

Observe that \(F_{j,k} P_k f = F_{j,k} f\), and \(F_{j,k}\) is a Fourier integral operator of order \(-\frac{1}{2}\) satisfying \eqref{eq:2.100}. By Lemma \ref{lemma3.3}, we have
\begin{align}\label{eq:5.2}
\left\|F_{j,k} f\right\|_{L^q_u L_x^q(\mathbb{R}^2 \times \mathbb{R})} \lesssim 2^{sk} \|f\|_{L_x^p(\mathbb{R}^2)},
\end{align}
where \(s \geq s_{-\frac{1}{2}, \epsilon}(p, q, q)\) and
\begin{align*}
s_{-\frac{1}{2}, \epsilon}(p, q, q) =
\begin{cases}
\frac{1}{p} - \frac{3}{q} + \epsilon, & \text{for } q \geq 3p'; \\
\frac{3}{2p} - \frac{3}{2q} - \frac{1}{2} + \epsilon, & \text{for } p' < q < 3p'; \\
\frac{2}{p} - \frac{1}{q} - 1, & \text{for } q \leq p'.
\end{cases}
\end{align*}
Additionally, Lemma \ref{lemma5.1} implies\footnote{We remark that: by the following Lemma \ref{lemma5.a}, then the estimate \eqref{eq:5.3} is established with $s\geq s_{-\frac{1}{2}, \epsilon}(p, q, q)+\frac{1}{q}$. However, we have $\bar{s}_{2, -\frac{1}{2}}(p, q)<s_{-\frac{1}{2}, \epsilon}(p, q, q)+\frac{1}{q}$.}
\begin{align}\label{eq:5.3}
\left\|F_{j,k} f\right\|_{L^\infty_u L_x^q(\mathbb{R}^2 \times \mathbb{R})} \lesssim 2^{sk} \|f\|_{L_x^p(\mathbb{R}^2)},
\end{align}
where \(s \geq \bar{s}_{2, -\frac{1}{2}}(p, q)\) with
\begin{align*}
\bar{s}_{2, -\frac{1}{2}}(p, q) :=
\begin{cases}
\frac{1}{p} - \frac{2}{q}, & \text{for } q > p'; \\
\frac{2}{p} - \frac{1}{q} - 1, & \text{for } q \leq p'.
\end{cases}
\end{align*}

Interpolating between \eqref{eq:5.2} and \eqref{eq:5.3}, we obtain
\begin{align}\label{eq:5.4}
\left\|F_{j,k} f\right\|_{L^r_u L_x^q(\mathbb{R}^2 \times \mathbb{R})} \lesssim 2^{sk} \|f\|_{L_x^p(\mathbb{R}^2)}
\end{align}
for all \(1 \leq p \leq q \leq r \leq \infty\), where \(s \geq \tilde{s}_{A, \epsilon}(p, q, r)\) and, for any \(\epsilon > 0\),
\begin{align*}
\tilde{s}_{A, \epsilon}(p, q, r) :=
\begin{cases}
\frac{1}{p} - \frac{2}{q} - \frac{1}{r} + \epsilon, & \text{for } q \geq 3p'; \\
\frac{1}{p} - \frac{2}{q} + \left(\frac{1}{2p} + \frac{1}{2q} - \frac{1}{2}\right) \frac{q}{r} + \epsilon, & \text{for } p' < q < 3p'; \\
\frac{2}{p} - \frac{1}{q} - 1, & \text{for } q \leq p'.
\end{cases}
\end{align*}
Therefore, when \((\frac{1}{p}, \frac{1}{q})\) satisfies \(\tilde{s}_{A, 0}(p, q, r) < 0\), we obtain a desired decay time-space estimate for \(F_{j,k}\), which in turn implies the time-space estimate for \(T\).

\subsubsection{The Case \(1 \leq p \leq r \leq q \leq \infty\)}

One can apply the Sobolev embedding theorem with respect to the space variable $x$ to see that
\begin{align*}
\left\|F_{j,k} f\right\|_{L^r_u L_x^q(\mathbb{R}^2 \times \mathbb{R})} \lesssim 2^{2\left(\frac{1}{r} - \frac{1}{q}\right)k} \left\|F_{j,k} f\right\|_{L^r_u L_x^r(\mathbb{R}^2 \times \mathbb{R})}
\end{align*}
for \(r \leq q \leq \infty\). Combining this with \eqref{eq:5.4}, we derive
\begin{align}\label{eq:5.5}
\left\|F_{j,k} f\right\|_{L^r_u L_x^q(\mathbb{R}^2 \times \mathbb{R})} \lesssim 2^{sk} \|f\|_{L_x^p(\mathbb{R}^2)}
\end{align}
for all \(1 \leq p \leq r \leq q \leq \infty\), where \(s \geq \tilde{s}_{A, \epsilon}(p, r, r) + 2\left(\frac{1}{r} - \frac{1}{q}\right)\).

On the other hand, by H\"older's inequality and \eqref{eq:5.4}, a straightforward computation shows that
\begin{align*}
\left\|F_{j,k} f\right\|_{L^r_u L_x^q(\mathbb{R}^2 \times \mathbb{R})} \lesssim \left\|F_{j,k} f\right\|_{L^q_u L_x^q(\mathbb{R}^2 \times \mathbb{R})} \lesssim 2^{sk} \|f\|_{L_x^p(\mathbb{R}^2)}
\end{align*}
holds for all \(1 \leq p \leq r \leq q \leq \infty\) with \(s \geq \tilde{s}_{A, \epsilon}(p, q, q)\). Combining this with \eqref{eq:5.5}, we obtain
\begin{align}\label{eq:5.05}
\left\|F_{j,k} f\right\|_{L^r_u L_x^q(\mathbb{R}^2 \times \mathbb{R})} \lesssim 2^{sk} \|f\|_{L_x^p(\mathbb{R}^2)}
\end{align}
for all \(1 \leq p \leq r \leq q \leq \infty\), where \(s \geq \tilde{s}_{B, \epsilon}(p, q, r)\) with
\[
\tilde{s}_{B, \epsilon}(p, q, r) := \min\left\{\tilde{s}_{A, \epsilon}(p, r, r) + 2\left(\frac{1}{r} - \frac{1}{q}\right), \tilde{s}_{A, \epsilon}(p, q, q)\right\}.
\]
Then, when \((\frac{1}{p}, \frac{1}{q})\) satisfies \(\tilde{s}_{B, 0}(p, q, r) < 0\), we obtain a desired decay time-space estimate for \(F_{j,k}\), which implies the time-space estimate for \(T\).

\subsubsection{The Case \(1 \leq r < p \leq q \leq \infty\)}

As in \cite{LiuYu}, by \eqref{eq:5.1}, the corresponding time-space estimate for \(E_{j,k} - F_{j,k}\) has a bound of \(2^{-Nk}\) for \(N\) sufficiently large, for all \(1 \leq p \leq q \leq \infty\) and \(1 \leq r \leq \infty\). Thus, the estimate in \eqref{eq:5.5} also applies to \(\widetilde{T}^{1,b}_{j,k}\). Combining this with the time-space estimate for \(\widetilde{T}^{1,a}_{j,k}\), we obtain
\begin{align}\label{eq:5.6}
\left\|\widetilde{T}^1_{j,k} f\right\|_{L^r_u L_x^q(\mathbb{R}^2 \times \mathbb{R})} \lesssim 2^{sk} \|f\|_{L_x^p(\mathbb{R}^2)}
\end{align}
for all \(1 \leq p \leq r \leq q \leq \infty\) with \(s \geq \tilde{s}_{B, \epsilon}(p, q, r)\).

Now, consider the case \(1 \leq r < p \leq q \leq \infty\). By H\"older's inequality and \eqref{eq:5.6}, we have
\begin{align*}
\left\|\widetilde{T}^1_{j,k} f\right\|_{L^r_u L_x^q(\mathbb{R}^2 \times \mathbb{R})} \lesssim \left\|\widetilde{T}^1_{j,k} f\right\|_{L^p_u L_x^q(\mathbb{R}^2 \times \mathbb{R})} \lesssim 2^{\tilde{s}_{B, \epsilon}(p, q, p)k} \|f\|_{L_x^p(\mathbb{R}^2)}
\end{align*}
and
\begin{align*}
\left\|\widetilde{T}^1_{j,k} f\right\|_{L^r_u L_x^q(\mathbb{R}^2 \times \mathbb{R})} \lesssim \left\|\widetilde{T}^1_{j,k} f\right\|_{L^q_u L_x^q(\mathbb{R}^2 \times \mathbb{R})} \lesssim 2^{\tilde{s}_{B, \epsilon}(p, q, q)k} \|f\|_{L_x^p(\mathbb{R}^2)}.
\end{align*}
Thus, for all \(1 \leq r < p \leq q \leq \infty\),
\begin{align}\label{eq:5.7}
\left\|\widetilde{T}^1_{j,k} f\right\|_{L^r_u L_x^q(\mathbb{R}^2 \times \mathbb{R})} \lesssim 2^{sk} \|f\|_{L_x^p(\mathbb{R}^2)},
\end{align}
where \(s \geq \tilde{s}_{C, \epsilon}(p, q, r)\) and
\[
\tilde{s}_{C, \epsilon}(p, q, r) := \min\left\{\tilde{s}_{B, \epsilon}(p, q, p), \tilde{s}_{B, \epsilon}(p, q, q)\right\}.
\]
Notice that $$\tilde{s}_{C, 0}(p, q, r) = \min\left\{\tilde{s}_{A, 0}(p, p, p) + 2\left(\frac{1}{p} - \frac{1}{q}\right), \tilde{s}_{A, 0}(p, q, q)\right\}.$$ Consequently, if \((\frac{1}{p}, \frac{1}{q})\) satisfies \(\tilde{s}_{C, 0}(p, q, r) < 0\), we establish the desired decay time-space estimate for \(\widetilde{T}^1_{j,k}\), and the corresponding time-space estimate for \(T\) follows.

\begin{remark}\label{remark5.1}
For any \(\epsilon > 0\), a computation yields
\begin{align*}
\tilde{s}_{C, \epsilon}(p, q, r) =
\begin{cases}
-\frac{2}{q} + \epsilon, & \text{for } q \geq 3p' \text{ and } \frac{1}{p} \leq \frac{1}{4}; \\
\frac{2}{p} - \frac{2}{q} - \frac{1}{2} + \epsilon, & \text{for } q \geq 3p' \text{ and } \frac{1}{4} < \frac{1}{p} < \frac{1}{2} - \frac{1}{q}; \\
\frac{1}{p} - \frac{3}{q} + \epsilon, & \text{for } q \geq 3p' \text{ and } \frac{1}{p} \geq \frac{1}{2} - \frac{1}{q}; \\
\frac{3}{2p} - \frac{3}{2q} - \frac{1}{2} + \epsilon, & \text{for } p' < q < 3p'; \\
\frac{2}{p} - \frac{1}{q} - 1 + \epsilon, & \text{for } q \leq p'.
\end{cases}
\end{align*}
If \(\tilde{s}_{C, 0}(p, q, r) < 0\), then \eqref{eq:5.7} holds with a decay in \(k\), where \((\frac{1}{p}, \frac{1}{q})\) belongs to the region with vertices \((0, 0)\), \((\frac{1}{4}, 0)\), \((\frac{3}{8}, \frac{1}{8})\), \((\frac{1}{2}, \frac{1}{6})\), \((\frac{2}{3}, \frac{1}{3})\), and \((1, 1)\). Furthermore, the point \((\frac{3}{8}, \frac{1}{8})\) can be removed by interpolation between \((\frac{1}{4}, 0)\) and \((\frac{1}{2}, \frac{1}{6})\). Thus, \eqref{eq:5.7} holds with some \(s < 0\) if \((\frac{1}{p}, \frac{1}{q})\) satisfies
\begin{align*}
\begin{cases}
\frac{1}{q} > \frac{2}{3p} - \frac{1}{6}, & \text{for } q \geq 3p'; \\
\frac{1}{q} > \frac{1}{p} - \frac{1}{3}, & \text{for } p' < q < 3p'; \\
\frac{1}{q} > \frac{2}{p} - 1, & \text{for } q \leq p'.
\end{cases}
\end{align*}
Combining \(1 \leq r < p \leq q \leq \infty\) with the necessary condition \(1 + (1 + \omega)\left(\frac{1}{q} - \frac{1}{p}\right) > 0\), the region of \((\frac{1}{p}, \frac{1}{q})\) in Theorem \ref{thm3} for this case is illustrated in Figure \ref{Figure:5}. Here, we assume \(1 \leq r \leq \frac{3}{2}\), and the figures for other ranges of \(r\) can be constructed similarly.

\begin{figure}[htbp]
  \centering
  \includegraphics[width=4in]{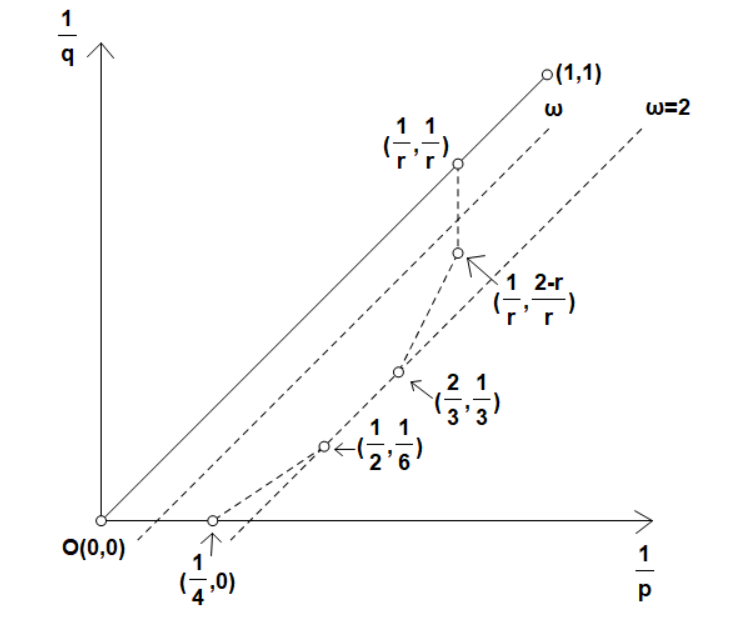}
  \caption{The region of boundedness in \eqref{eq:1.7} for the case \(1 \leq r < p \leq q \leq \infty\) and \(1 \leq r \leq \frac{3}{2}\).}
  \label{Figure:5}
\end{figure}
\end{remark}

\subsection{Necessary Part}\label{subsect:4.2}

In this subsection, we establish necessary conditions. We begin with a lemma analogous to Lemma \ref{lemma3.2}.

\begin{lemma}\label{lemma5.a}
Suppose $G(x,u) \in C_c^1\left(\frac{1}{2}, \frac{5}{2}\right)$ with respect to $u$ for all $x \in \mathbb{R}^2$. Then, for any $1 \leq q, r \leq \infty$, the following inequality holds:
\begin{align*}
\left\|G\right\|_{L^{\infty}_u L_x^{q}(\mathbb{R}^2 \times \mathbb{R})} \lesssim \left\|G\right\|^{\frac{1}{r'}}_{L^{r}_u L_x^{q}(\mathbb{R}^2 \times \mathbb{R})} \left\|\partial_u G\right\|^{\frac{1}{r}}_{L^{r}_u L_x^{q}(\mathbb{R}^2 \times \mathbb{R})}.
\end{align*}
\end{lemma}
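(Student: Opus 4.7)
The plan is to imitate the one-variable argument used in the proof of Lemma \ref{lemma3.2}, but first collapse the spatial variable by defining the scalar function
\[
H(u) := \|G(\cdot, u)\|_{L_x^q(\mathbb{R}^2)},
\]
so that the goal $\|G\|_{L^\infty_u L_x^q} \lesssim \|H\|_{L^r_u}^{1/r'}\|\partial_u G\|_{L^r_u L_x^q}^{1/r}$ becomes a statement purely about $H$ on the real line. The case $r = \infty$ is trivial with the convention $1/r' = 1$, $1/r = 0$, so I would focus on $r \in [1,\infty)$.

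First I would check that $H$ is absolutely continuous on $(\frac12,\frac52)$ with compact support. This follows from Minkowski's integral inequality: for $u' < u$,
\[
|H(u) - H(u')| \le \|G(\cdot,u) - G(\cdot,u')\|_{L_x^q}
= \Bigl\|\int_{u'}^{u} \partial_v G(\cdot,v)\,\mathrm{d}v\Bigr\|_{L_x^q}
\le \int_{u'}^{u} \|\partial_v G(\cdot,v)\|_{L_x^q}\,\mathrm{d}v,
\]
so $H$ is absolutely continuous with $|H'(v)| \le \|\partial_v G(\cdot,v)\|_{L_x^q}$ for almost every $v$, and compactly supported since $G$ is compactly supported in $u$.

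Next I would apply the same pointwise identity used in Lemma \ref{lemma3.2} to $H$: since $H \in C_c$ and $|H|^r$ is absolutely continuous with derivative $r|H|^{r-1}\operatorname{sign}(H)\cdot H'$,
\[
|H(u)|^r = \int_{-\infty}^{u} \frac{\mathrm{d}}{\mathrm{d}v}|H(v)|^r\,\mathrm{d}v
\lesssim \int_{\mathbb{R}} |H(v)|^{r-1}\,|H'(v)|\,\mathrm{d}v.
\]
Hölder's inequality with exponents $r/(r-1)$ and $r$ then gives
\[
|H(u)|^r \lesssim \|H\|_{L^r_u}^{r-1}\,\|H'\|_{L^r_u}
\le \|G\|_{L^r_u L_x^q}^{r-1}\,\|\partial_u G\|_{L^r_u L_x^q}.
\]
Taking the supremum over $u$ and the $r$-th root yields the conclusion.

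There is no real obstacle here; the only subtlety is moving the $u$-derivative inside the $L_x^q$ norm, which is precisely what Minkowski's integral inequality provides. The proof is structurally identical to Lemma \ref{lemma3.2}, the only new ingredient being the preliminary reduction to the scalar function $H(u)$.
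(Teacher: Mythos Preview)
Your proof is correct and follows essentially the same route as the paper: both reduce to the scalar function $H(u)=\|G(\cdot,u)\|_{L_x^q}$, apply the one-variable inequality $|H(u)|^r\lesssim\int|H|^{r-1}|H'|$ with H\"older, and then use $|H'(v)|\le\|\partial_v G(\cdot,v)\|_{L_x^q}$. The only cosmetic difference is that you justify this last bound via Minkowski's integral inequality and the reverse triangle inequality, whereas the paper attributes it to H\"older's inequality after differentiating $\|G(\cdot,u)\|_{L_x^q}$ directly.
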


\begin{proof}
It suffices to consider $r \in [1, \infty)$. Observe that for all $u \in \left(\frac{1}{2}, \frac{5}{2}\right)$,
\begin{align*}
\left\|G(x,u)\right\|_{L_x^{q}(\mathbb{R}^2)}^r \lesssim \int_\mathbb{R} \left\|G(x,v)\right\|_{L_x^{q}(\mathbb{R}^2)}^{r-1} \left|\partial_v \|G(x,v)\|_{L_x^{q}(\mathbb{R}^2)}\right| \, \mathrm{d}v.
\end{align*}
Applying H\"older's inequality, we obtain
\begin{align*}
\left\|G(x,u)\right\|_{L^{\infty}_u L_x^{q}(\mathbb{R}^2 \times \mathbb{R})}^r \lesssim \left\|G(x,u)\right\|_{L^{r}_u L_x^{q}(\mathbb{R}^2 \times \mathbb{R})}^{r-1} \left\|\partial_u \|G(x,u)\|_{L_x^{q}(\mathbb{R}^2)}\right\|_{L^{r}_u(\mathbb{R})}.
\end{align*}
Combining this with the inequality
\begin{align*}
\left|\partial_u \|G(x,u)\|_{L_x^{q}(\mathbb{R}^2)}\right| \leq \|\partial_u G(x,u)\|_{L_x^{q}(\mathbb{R}^2)}
\end{align*}
for all $u\in(\frac{1}{2}, \frac{5}{2})$, which follows from H\"older's inequality, yields the desired estimate. This completes the proof.
\end{proof}

Now, assume that
\begin{align}\label{eq:5.a}
\left\|Tf\right\|_{L^r_u L_x^q(\mathbb{R}^2 \times [1,2])} \lesssim \|f\|_{L_x^{p}(\mathbb{R}^2)}
\end{align}
holds for some $1 \leq p, q, r \leq \infty$.

\subsubsection{Proof of (I) in Theorem \ref{thm3}}

We restrict our attention to the case $q \in [1, \infty)$. Let $f \in C_c^{\infty}(\mathbb{R}^2)$ with $\|f\|_{L_x^{p}(\mathbb{R}^2)} = 1$. Since $T$ is translation-invariant for any fixed $u \in [1,2]$, it follows from \eqref{eq:5.a} that
\begin{align*}
\left\|Tf(x-h,u) + Tf(x,u)\right\|_{L^r_u L_x^q(\mathbb{R}^2 \times [1,2])} \leq \Theta \|f(x-h) + f(x)\|_{L_x^{p}(\mathbb{R}^2)},
\end{align*}
where $h \in \mathbb{R}^2$ and $\Theta := \sup_{f \in C_c^{\infty}(\mathbb{R}^2), \|f\|_{L_x^{p}(\mathbb{R}^2)} = 1} \|Tf\|_{L^r_u L_x^q(\mathbb{R}^2 \times [1,2])} < \infty$. Letting $|h| \to \infty$ and applying Grafakos \cite[Exercise 2.5.1]{Gra}, we deduce
\begin{align*}
2^{\frac{1}{q}} \left\|Tf(x,u)\right\|_{L^r_u L_x^q(\mathbb{R}^2 \times [1,2])} \leq \liminf_{|h| \to \infty} \left\|Tf(x-h,u) + Tf(x,u)\right\|_{L^r_u L_x^q(\mathbb{R}^2 \times [1,2])}
\leq 2^{\frac{1}{p}} \Theta \|f\|_{L_x^{p}(\mathbb{R}^2)}.
\end{align*}
Thus, $\Theta \leq 2^{\frac{1}{p} - \frac{1}{q}} \Theta$, which is impossible unless $q \geq p$ or $T$ is the zero operator. This contradiction establishes $\frac{1}{q} \leq \frac{1}{p}$, completing the proof of (I) of Theorem \ref{thm3}.

\subsubsection{Proof of (II) of Theorem \ref{thm3}}

As in Subsection \ref{subsect:3.2}, if \eqref{eq:5.a} holds, then
\begin{align*}
\left\|Tf\right\|_{L^1_u L_x^q(\mathbb{R}^2 \times [1,2])} \lesssim \|f\|_{L_x^{p}(\mathbb{R}^2)}.
\end{align*}
Furthermore,
\begin{align*}
\int_1^2 \left|\frac{S_{t_i}}{2}\right| \, \mathrm{d}u \leq \left\|\frac{1}{\delta t_i} \int_{\frac{S_{t_i}}{2}} T\chi_{S_{t_i}}(x,u) \, \mathrm{d}x\right\|_{L^1_u([1,2])} \lesssim \frac{|S_{t_i}|^{\frac{1}{q'}}}{t_i} \left\|T\chi_{S_{t_i}}\right\|_{L^1_u L_x^{q}(\mathbb{R}^2 \times [1,2])} \lesssim \frac{|S_{t_i}|^{\frac{1}{q'}}}{t_i} \left\|\chi_{S_{t_i}}\right\|_{L_x^{p}(\mathbb{R}^2)}.
\end{align*}
Using the facts that $|S_{t_i}| \approx t_i |\gamma(t_i)|$ and $\|\chi_{S_{t_i}}\|_{L_x^{p}(\mathbb{R}^2)} \approx (t_i |\gamma(t_i)|)^{\frac{1}{p}}$, we obtain \eqref{eq:3.9}. This implies $1 + (1 + \omega)\left(\frac{1}{q} - \frac{1}{p}\right) \geq 0$, proving  (II) of Theorem \ref{thm3}.

\subsubsection{Proof of (III) of Theorem \ref{thm3}}

This proof is inspired by examples in \cite{LLLY}. For any $u \in [1,2]$ and $\epsilon > 0$, let $D_u$ be the $\epsilon$-neighborhood of the curve $(ut, u\gamma(t))$ with $t \in (0,1]$. As in \cite{LLLY}, there exists $\delta \in (0, \infty)$ such that $T(\chi_{B(0,\delta \epsilon)})(x,u) \geq \epsilon \chi_{D_u}(x)$. Since $|D_u| \approx \epsilon$ for all $u \in [1,2]$, we have
\begin{align*}
\left\|T(\chi_{B(0,\delta \epsilon)})\right\|_{L_x^q(\mathbb{R}^2)} \geq \epsilon \left\|\chi_{D_u}\right\|_{L_x^{q}(\mathbb{R}^2)} \gtrsim \epsilon^{1 + \frac{1}{q}},
\end{align*}
which implies
\begin{align*}
\left\|T(\chi_{B(0,\delta \epsilon)})\right\|_{L^r_u L_x^q(\mathbb{R}^2 \times [1,2])} \gtrsim \epsilon^{1 + \frac{1}{q}}.
\end{align*}
Combining this with $\|Tf\|_{L^r_u L_x^q(\mathbb{R}^2 \times [1,2])} \lesssim \|f\|_{L_x^{p}(\mathbb{R}^2)}$ and $\|\chi_{B(0,\delta \epsilon)}\|_{L_x^{p}(\mathbb{R}^2)} \approx \epsilon^{\frac{2}{p}}$, we obtain $\epsilon^{\frac{1}{q} - \frac{2}{p} + 1} \lesssim 1$ for all $\epsilon > 0$. For sufficiently small $\epsilon$, this requires $\frac{1}{q} \geq \frac{2}{p} - 1$.

On the other hand, for $\epsilon > 0$, define
\begin{align*}
Q := \left\{x \in \mathbb{R}^2 : |x \cdot e_1| \leq 6 \epsilon \text{ and } |x \cdot e_2| \leq [3 + 2|\gamma''(1)|] \epsilon^2\right\},
\end{align*}
where $e_1 := \frac{(-1, -\gamma'(1))}{\sqrt{1 + \gamma'(1)^2}}$ and $e_2 := \frac{(-\gamma'(1), 1)}{\sqrt{1 + \gamma'(1)^2}}$. Additionally, define
\begin{align*}
\Omega_{u_i} := \left\{x \in \mathbb{R}^2 : \left|\left(x - (u_i, u_i)\right) \cdot e_1\right| \leq \epsilon \text{ and } \left|\left(x - (u_i, u_i)\right) \cdot e_2\right| \leq \epsilon^2\right\},
\end{align*}
where $\{u_i\} \subset [1,2]$ satisfies $|u_{i+1} - u_i| = \frac{4\epsilon^2}{|(1,1)\cdot e_2|}$ for all $i$. Moreover, for sufficiently small $\epsilon > 0$, we obtained in \cite{LLLY} that
\begin{align*}
T\chi_Q(x,u) \geq \int_{1 - \frac{\epsilon}{\sqrt{1 + \gamma'(1)^2}}}^1 \chi_Q(x_1 - ut, x_2 - u\gamma(t)) \, \mathrm{d}t = \frac{\epsilon}{\sqrt{1 + \gamma'(1)^2}}
\end{align*}
for all $x \in \Omega_{u_i}$ and $u \in (u_i - \epsilon^2, u_i + \epsilon^2)$. Since $\Omega_{u_i} \cap \Omega_{u_{i'}} = \emptyset$ for $i \neq i'$ and the number of elements in $\{u_i\} \subset [1,2]$ is equivalent to $ \epsilon^{-2}$, we conclude
\begin{align*}
\left\|T\chi_Q\right\|_{L^r_u L_x^q(\mathbb{R}^2 \times [1,2])} \geq \left\{\sum_i \int_{u_i - \epsilon^2}^{u_i + \epsilon^2} \left[\int_{\Omega_{u_i}} \left|T\chi_Q\right|^q \, \mathrm{d}x\right]^{\frac{r}{q}} \, \mathrm{d}u\right\}^{\frac{1}{r}} \gtrsim \epsilon^{1 + \frac{3}{q}}.
\end{align*}
Combining this with $\|\chi_Q\|_{L^{p}(\mathbb{R}^2)} \approx \epsilon^{\frac{3}{p}}$ and $\|Tf\|_{L^r_u L_x^q(\mathbb{R}^2 \times [1,2])} \lesssim \|f\|_{L_x^{p}(\mathbb{R}^2)}$, we obtain $\epsilon^{1 + \frac{3}{q} - \frac{3}{p}} \lesssim 1$ for all sufficiently small $\epsilon > 0$. Letting $\epsilon \to 0$ implies $\frac{1}{q} \geq \frac{1}{p} - \frac{1}{3}$ for all $r \in [1, \infty]$.

Thus, we obtain $\frac{1}{q} \geq \max\{ \frac{2}{p} - 1, \frac{1}{p} - \frac{1}{3} \}$, and (III) of Theorem \ref{thm3} is proved.

\subsubsection{Proof of (IV) of Theorem \ref{thm3}}

From \eqref{eq:5.3}, setting $\bar{s}_{2,-\frac{1}{2}}(p,q) < 0$ yields a decay estimate $F_{j,k} : L_x^p(\mathbb{R}^2) \to L^{\infty}_u L_x^q(\mathbb{R}^2 \times \mathbb{R})$, where $(\frac{1}{p}, \frac{1}{q})$ lies in the open region with vertices $(0, 0)$, $(\frac{2}{3}, \frac{1}{3})$, and $(1, 1)$. Then, $T : L_x^p(\mathbb{R}^2) \to L^{\infty}_u L_x^q(\mathbb{R}^2 \times \mathbb{R})$ holds on the intersection of this region with $1 + (1 + \omega)\left(\frac{1}{q} - \frac{1}{p}\right) > 0$. Thus, $T : L_x^p(\mathbb{R}^2) \to L_x^q(\mathbb{R}^2)$ holds on this region uniformly in $u \in [1,2]$. Combining this with \cite[Theorem 2.1]{LLLY}, we conclude that $(0, 0)$, $(\frac{2}{3}, \frac{1}{3})$, and $(1, 1)$ are the endpoints of the estimate $F_{j,k} : L_x^p(\mathbb{R}^2) \to L^{\infty}_u L_x^q(\mathbb{R}^2 \times \mathbb{R})$. Therefore, $\bar{s}_{2,-\frac{1}{2}}(p,q)$ is the infimum ensuring \eqref{eq:5.3} holds.

On the other hand, as in Subsection 5.1, for $1 \leq p \leq q \leq \infty$, the $L_x^p(\mathbb{R}^2) \to L^{\infty}_u L_x^q(\mathbb{R}^2 \times \mathbb{R})$ estimates for $E_{j,k} - F_{j,k}$ and $\widetilde{T}^{1,a}_{j,k}$ are bounded by $2^{-Nk}$ for large $N$. Recall that $Q_k f = (\varrho_k(\xi)\hat{f}(\xi))^{\vee}$, and a similar estimate to \eqref{eq:5.3} holds. Namely, if
\begin{align*}
\left\|\widetilde{T}^1_{j,k} Q_k f\right\|_{L^{\infty}_u L_x^q(\mathbb{R}^2 \times \mathbb{R})} \lesssim 2^{s k} \|f\|_{L_x^{p}(\mathbb{R}^2)},
\end{align*}
then $s \geq \bar{s}_{2,-\frac{1}{2}}(p,q)$. Now, from Lemma \ref{lemma5.a} and \eqref{eq:5.a}, we have
\begin{align*}
\left\|\widetilde{T}^1_{j,k} Q_k f\right\|_{L^{\infty}_u L_x^q(\mathbb{R}^2 \times \mathbb{R})} \lesssim 2^{\frac{1}{r}k} \left\|\widetilde{T}^1_{j,k} Q_k f\right\|_{L^r_u L_x^q(\mathbb{R}^2 \times \mathbb{R})} \lesssim 2^{\frac{1}{r}k} \|f\|_{L_x^{p}(\mathbb{R}^2)},
\end{align*}
which implies $\frac{1}{r} \geq \bar{s}_{2,-\frac{1}{2}}(p,q)$, i.e., $$\frac{1}{q} \geq \max\left\{\frac{1}{2p} - \frac{1}{2r}, \frac{2}{p} - \frac{r+1}{r}\right\}.$$

From \eqref{eq:5.2}, setting $s_{-\frac{1}{2},0}(p,q,q) < 0$ yields the decay estimate $F_{j,k} : L_x^p(\mathbb{R}^2) \to L^{\infty}_u L_x^q(\mathbb{R}^2 \times \mathbb{R})$ when $(\frac{1}{p}, \frac{1}{q})$ lies in the open region with vertices $(0, 0)$, $(\frac{1}{2}, \frac{1}{6})$, $(\frac{2}{3}, \frac{1}{3})$, and $(1, 1)$. This result is nearly sharp by \cite{LLLY}. Similarly, if
\begin{align*}
\left\|\widetilde{T}^1_{j,k} Q_k f\right\|_{L^{q}_u L_x^q(\mathbb{R}^2 \times \mathbb{R})} \lesssim 2^{s k} \|f\|_{L_x^{p}(\mathbb{R}^2)},
\end{align*}
then $s \geq s_{-\frac{1}{2},0}(p,q,q)$. On the other hand, by \eqref{eq:5.a}, applying the Sobolev embedding theorem with respect to $u$ for $q > r$ and $\textrm{H}\ddot{\textrm{o}}\textrm{lder}$'s inequality for $q \leq r$ yields
\begin{align*}
\left\|\widetilde{T}^1_{j,k} Q_k f\right\|_{L^q_u L_x^q(\mathbb{R}^2 \times \mathbb{R})} \lesssim 2^{\max\left\{\frac{1}{r} - \frac{1}{q}, 0\right\}k} \|f\|_{L_x^{p}(\mathbb{R}^2)}.
\end{align*}
It further yields $\max\{\frac{1}{r} - \frac{1}{q}, 0\} \geq s_{-\frac{1}{2},0}(p,q,q)$.
Note that \( s_{-\frac{1}{2},0}(p,q,q) = \tilde{s}_{A,0}(p,q,q) \), we obtain $$\max\left\{\frac{1}{r} - \frac{1}{q}, 0\right\} \geq \tilde{s}_{A,0}(p,q,q).$$

Putting these results together, (IV) of Theorem \ref{thm3} is proved.

\bigskip

\noindent{\bf Acknowledgments.}
The authors would like to thank N. Liu for helpful suggestions and discussions. J. Li is supported by National Natural Science Foundation of China (No.~12471090). Z. Lou is supported by National Natural Science Foundation of China (No.~12471093), Guangdong Basic and Applied Basic Research Foundation (No.~2024A1515010468), and LKSF STU-GTIIT Joint-research Grant (No.~2024LKSF06). H. Yu is supported by National Natural Science Foundation of China (No.~12201378) and Guangdong Basic and Applied Basic Research Foundation (No.~2023A1515010635).

\bigskip

\bigskip

\noindent  Junfeng Li

\smallskip

\noindent  School of Mathematical Sciences, Dalian University of Technology, Dalian, 116024,  People's Republic of China

\smallskip

\noindent {\it E-mail}: \texttt{junfengli@dlut.edu.cn}

\bigskip

\noindent Zengjian Lou and Haixia Yu (Corresponding author)

\smallskip

\noindent  Department of Mathematics, Shantou University, Shantou, 515821, People's Republic of China

\smallskip

\noindent {\it E-mails}: \texttt{zjlou@stu.edu.cn} (Z. Lou)

\noindent\phantom{{\it E-mails:}} \texttt{hxyu@stu.edu.cn} (H. Yu)


\begin{thebibliography}{10}
\bibitem{BDH24} D. Beltran, J. Duncan and J. Hickman, Off-diagonal estimates for the helical maximal function, Proc. Lond. Math. Soc. 128 (2024), no. 4, Paper No. e12594, 36 pp.

\vspace{-.3cm}

\bibitem{BGHS} D. Beltran, S. Guo, J. Hickman and A. Seeger, Sharp $L^p$ bounds for the helical maximal function, Amer. J. Math. 147 (2025), no. 1, 149--234.

\vspace{-.3cm}

\bibitem{B} D. Beltran, J. Hickman and C.D. Sogge, Variable coefficient Wolff-type inequalities and sharp local smoothing estimates for wave equations on manifolds, Anal. PDE 13 (2020), no. 2, 403--433.

\vspace{-.3cm}

\bibitem{BORSS} D. Beltran, R. Oberlin, L. Roncal, A. Seeger and B. Stovall, Variation bounds for spherical averages, Math. Ann. 382 (2022), no. 1--2, 459--512.

\vspace{-.3cm}

\bibitem{BBGL} J. Bennett, N. Bez, S. $\textrm{Guti}\acute{\textrm{e}}\textrm{rrez}$ and S. Lee, Estimates for the kinetic transport equation in hyperbolic Sobolev spaces, J. Math. Pures Appl. 114 (2018), 1--28.

\vspace{-.3cm}

\bibitem{Bour86} J. Bourgain, Averages in the plane over convex curves and maximal operators, J. Analyse Math. 47 (1986), 69--85.

\vspace{-.3cm}

\bibitem{BoD} J. Bourgain and C. Demeter, The proof of the $l^2$ decoupling conjecture, Ann. of Math. 182 (2015), no. 1, 351--389.

\vspace{-.3cm}

\bibitem{CCVWW} A. Carbery, M. Christ, J. Vance, S. Wainger and D. Watson, Operators associated to flat plane curves: $L^p$ estimates via dilation methods, Duke Math. J. 59 (1989), no. 3, 675--700.

\vspace{-.3cm}

\bibitem{CVWW} A. Carbery, J. Vance, S. Wainger and D. Watson, The Hilbert transform and maximal function along flat curves, dilations, and differential equations, Amer. J. Math. 116 (1994), no. 5, 1203--1239.

\vspace{-.3cm}

\bibitem{CLL} C. Cho, S. Lee and W. Li, Endpoint estimates for maximal operators associated to the wave equation, arXiv:2501.01686.

\vspace{-.3cm}

\bibitem{Cow} M.G. Cowling, Pointwise Behavior of Solutions to $\textrm{Schr}\ddot{\textrm{o}}\textrm{dinger}$ Equations, Lecture Notes in Math., 992
Springer-Verlag, Berlin, 1983, 83--90.

\vspace{-.3cm}

\bibitem{GLMX} C. Gao, B. Liu, C. Miao and Y. Xi, Square function estimates and local smoothing for Fourier integral operators, Proc. Lond. Math. Soc. 126 (2023), no. 6, 1923--1960.

\vspace{-.3cm}

\bibitem{Gra} L. Grafakos, Classical Fourier Analysis, Third edition, Graduate Texts in Mathematics 249, Springer, New York, 2014.

\vspace{-.3cm}

\bibitem{Gr1} P.T. Gressman, $L^p$-improving properties of $X$-ray like transforms, Math. Res. Lett. 13 (2006), no. 5--6, 787--803.

\vspace{-.3cm}

\bibitem{Gr2} P.T. Gressman, Uniform sublevel Radon-like inequalities, J. Geom. Anal. 23 (2013), no. 2, 611--652.

\vspace{-.3cm}

\bibitem{GWZ} L. Guth, H. Wang and R. Zhang, A sharp square function estimate for the cone in $\mathbb{R}^3$, Ann. of Math. 192 (2020), no. 2, 551--581.

\vspace{-.3cm}

\bibitem{HNS} Y. Heo, F. Nazarov and A. Seeger, Radial Fourier multipliers in high dimensions, Acta Math. 206 (2011), no. 1, 55--92.

\vspace{-.3cm}

\bibitem{H} J. Hickman, Uniform  $L^p_x-L^q_{x,r}$ improving for dilated averages over polynomial curves, J. Funct. Anal. 270 (2016), no. 2, 560--608.

\vspace{-.3cm}

\bibitem{Ho} L. $\textrm{H}\ddot{\textrm{o}}\textrm{rmander}$, Estimates for translation invariant operators in $L^p$  spaces, Acta Math. 104 (1960), 93--140.

\vspace{-.3cm}

\bibitem{Hor} L. $\textrm{H}\ddot{\textrm{o}}\textrm{rmander}$, Fourier integral operators. I, Acta Math. 127 (1971), no. 1--2, 79--183.

\vspace{-.3cm}

\bibitem{IKM} I.A. Ikromov, M. Kempe and D.  M\"{u}ller, Estimates for maximal functions associated with hypersurfaces in ${\mathbb R}^{3}$ and related problems of harmonic analysis, Acta Math. 204 (2010), no. 2, 151--271.

\vspace{-.3cm}

\bibitem{Iose} A. Iosevich, Maximal operators associated to families of flat curves in the plane, Duke Math. J. 76 (1994), no. 2, 633--644.

\vspace{-.3cm}

\bibitem{KT} M. Keel and T. Tao, Endpoint Strichartz estimates, Amer. J. Math. 120 (1998), no. 5, 955--980.

\vspace{-.3cm}

\bibitem{KLO} H. Ko, S. Lee and S. Oh, Maximal estimates for averages over space curves, Invent. Math. 228 (2022), no. 2, 991--1035.

\vspace{-.3cm}

\bibitem{KLO23} H. Ko, S. Lee and S. Oh, Sharp smoothing properties of averages over curves, Forum Math. Pi 11 (2023), Paper No. e4, 33 pp.

\vspace{-.3cm}

\bibitem{Lee06} S. Lee, Linear and bilinear estimates for oscillatory integral operators related to restriction to hypersurfaces, J. Funct. Anal. 241 (2006), no. 1, 56--98.

\vspace{-.3cm}

\bibitem{LRS12} S. Lee, K.M. Rogers and A. Seeger, Improved bounds for Stein's square functions, Proc. Lond. Math. Soc. 104 (2012), no. 6, 1198--1234.

\vspace{-.3cm}

\bibitem{LRV08} S. Lee, K.M. Rogers and A. Vargas, Sharp null form estimates for the wave equation in $\mathbb{R}^{3+1}$, Int. Math. Res. Not. IMRN 2008, Art. ID rnn 096, 18 pp.

\vspace{-.3cm}

\bibitem{LV08} S. Lee and A. Vargas, Sharp null form estimates for the wave equation, Amer. J. Math. 130 (2008), no. 5, 1279--1326.

\vspace{-.3cm}

\bibitem{LaY} J. Li and A. Yu, The boundedness of the dilated averages over the parabola. Front. Math. (to apper).

\vspace{-.3cm}

\bibitem{LLLY} J. Li, N. Liu, Z. Lou and H. Yu, $L^p_x\rightarrow L^q_{x,u}$ estimates for dilated averages over planar curves, Math. Z. 311 (2025), no. 2, Paper No. 31, 22 pp.

\vspace{-.3cm}

\bibitem{LWZ} W. Li, H. Wang and Y. Zhai, $L^p$-improving bounds and weighted estimates for maximal functions associated with curvature, J. Fourier Anal. Appl. 29 (2023), no. 1, Paper No. 10, 63 pp.

\vspace{-.3cm}

\bibitem{LSY} N. Liu, L. Song and H. Yu, $L^p$ bounds of maximal operators along variable planar curves in the Lipschitz regularity, J. Funct. Anal. 280 (2021), no. 5, Paper No. 108888, 40 pp.

\vspace{-.3cm}

\bibitem{LYu} N. Liu and H. Yu, Hilbert transforms along variable planar curves: Lipschitz regularity, J. Funct. Anal. 282 (2022), no. 4, Paper No. 109340, 36 pp.

\vspace{-.3cm}

\bibitem{LiuYu} N. Liu and H. Yu, $L^p$-improving bounds of maximal functions along planar curves, J. Geom. Anal. 34 (2024), no. 11, Paper No. 330, 30 pp.

\vspace{-.3cm}

\bibitem{MSS} G. Mockenhaupt, A. Seeger and C.D. Sogge, Local smoothing of Fourier integral operators and Carleson-Sj\"olin estimates, J. Amer. Math. Soc. 6 (1993), no. 1, 65--130.

\vspace{-.3cm}

\bibitem{MSS92} G. Mockenhaupt, A. Seeger and C.D. Sogge, Wave front sets, local smoothing and Bourgain's circular maximal theorem, Ann. of Math. 136 (1992), no. 1, 207--218.

\vspace{-.3cm}

\bibitem{NRW76} A. Nagel, N. Riviere and S. Wainger, A maximal function associated to the curve $(t,t^2)$, Proc. Natl. Acad. Sci. USA. 73 (1976), no. 3, 1416--1417.

\vspace{-.3cm}

\bibitem{RV} K.M. Rogers and P. Villarroya, Sharp estimates for maximal operators associated to the wave equation, Ark. Mat. 46 (2008), no. 1, 143--151.

\vspace{-.3cm}

\bibitem{Sch} W. Schlag, A generalization of Bourgain's circular maximal theorem, J. Amer. Math. Soc. 10 (1997), no. 1, 103--122.

\vspace{-.3cm}

\bibitem{SchS} W. Schlag and C.D. Sogge, Local smoothing estimates related to the circular maximal theorem, Math. Res. Lett. 4 (1997), no. 1, 1--15.

\vspace{-.3cm}

\bibitem{SSS} A. Seeger, C.D. Sogge and E.M. Stein, Regularity properties of Fourier integral operators, Ann. of Math. 134 (1991), no. 2, 231--251.

\vspace{-.3cm}

\bibitem{So} C.D. Sogge, Fourier Integrals in Classical Analysis. Second edition. Cambridge Tracts in Mathematics, 210. Cambridge University Press, Cambridge, 2017.

\vspace{-.3cm}

\bibitem{Sog} C.D. Sogge, Propagation of singularities and maximal functions in the plane, Invent. Math. 104 (1991), no. 2, 349--376.

\vspace{-.3cm}

\bibitem{SSte} C.D. Sogge and E.M. Stein, Averages of functions over hypersurfaces in $\mathbb{R}^{n}$, Invent. Math. 82 (1985), no. 3, 543--556.

\vspace{-.3cm}

\bibitem{SoSte} C.D. Sogge and E.M. Stein, Averages over hypersurfaces. II, Invent. Math. 86 (1986), no. 2, 233--242.

\vspace{-.3cm}

\bibitem{Stein} E.M. Stein, Harmonic Analysis: real-variable methods, orthogonality, and oscillatory integrals, Princeton Mathematical Series 43, Monographs in Harmonic Analysis III, Princeton University Press, Princeton, NJ, 1993.

\vspace{-.3cm}

\bibitem{Ste1} E.M. Stein, Maximal functions. II. Homogeneous curves, Proc. Nat. Acad. Sci. U.S.A. 73 (1976), no. 7, 2176--2177.

\vspace{-.3cm}

\bibitem{SW1} E.M. Stein and S. Wainger, Maximal functions associated to smooth curves, Proc. Nat. Acad. Sci. U.S.A. 73 (1976), no. 12, 4295--4296.

\vspace{-.3cm}

\bibitem{SW} E.M. Stein and S. Wainger, Problems in harmonic analysis related to curvature, Bull. Amer. Math. Soc. 84 (1978), no. 6, 1239--1295.

\vspace{-.3cm}

\bibitem{Str} R.S. Strichartz, Restrictions of Fourier transforms to quadratic surfaces and decay of solutions of wave equations, Duke Math. J. 44 (1977), no. 3, 705--714.

\vspace{-.3cm}

\bibitem{TW} T. Tao and J. Wright, $L^p$ improving bounds for averages along curves, J. Amer. Math. Soc. 16 (2003), no. 3, 605--638.

\vspace{-.3cm}

\bibitem{Wolff} T. Wolff, A Kakeya-type problem for circles, Amer. J. Math. 119 (1997), no. 5, 985--1026.

\end{thebibliography}
\end{document}